\title{Perturbing Misiurewicz parameters in the exponential family}
\author{Neil Dobbs}
\newcommand\cQ{\mathcal{Q}}
\newcommand\cD{\mathcal{D}}
\newcommand\cJ{\mathcal{J}}
\newcommand\cL{\mathcal{L}}
\newcommand\cR{\mathcal{R}}
\newcommand\cW{\mathcal{W}}
\newcommand\re{{\Re}}
\newcommand\eps{\varepsilon}
\DeclareMathOperator{\dist}{dist}
\DeclareMathOperator{\diam}{diam}
\newcommand\ccc{\mathbb{C}}
\newcommand\Z{\mathbb{Z}}
\newcommand\N{\mathbb{N}}
\newtheorem*{mainthm}{Main Theorem}
\newtheorem{conj}{Conjecture}
\newtheorem{thm}{Theorem}
\newtheorem{lem}[thm]{Lemma}
\newtheorem{prop}[thm]{Proposition}
\begin{document}
\date{\today}
\begin{abstract}
  In one-dimensional real and complex dynamics, a map whose post-singular (or post-critical) set is bounded and uniformly repelling is often called a Misiurewicz map. In results hitherto, perturbing a Misiurewicz map is likely to give a non-hyperbolic map, as per Jakobson's Theorem for unimodal interval maps. This is despite genericity of hyperbolic parameters (at least in the interval setting). We show the contrary holds in the complex exponential family $z \mapsto \lambda \exp(z)$: Misiurewicz maps are Lebesgue density points for hyperbolic parameters. 
  As a by-product, we also show that Lyapunov exponents almost never exist for exponential Misiurewicz maps. The lower Lyapunov exponent is $-\infty$ almost everywhere. The upper Lyapunov exponent is non-negative and depends on the choice of metric. 
\end{abstract}
    
\maketitle

\section{Introduction}
Jakobson's Theorem (\cite{Jakobson:Quad}) from 1981 is one of the more celebrated and striking results in dynamical systems. In the real quadratic (or logistic) family $f_a: x \mapsto ax(1-x)$, Jakobson showed that there is a positive measure set of parameters $a$ close to the Chebyshev parameter $a=4$ for which the map has an absolutely continuous, $f_a$-invariant probability measure $\mu_a$. One can contrast this with the result (\cite{GS:Fatou, Lyubich:QP12}), due to Graczyk and \'Swi\c atek and to Lyubich, which states that the set of hyperbolic parameters is open and dense, to emphasise the intricacy of quadratic dynamics.
    Rees in \cite{Rees:Rat} generalised Jakobson's result to rational maps of the Riemann sphere. Benedicks and Carleson extended these results to the H\'enon family in \cite{BC:Henon}. In these settings, one starts with a map with a repelling post-critical set, and sufficiently small perturbations are likely to give non-hyperbolic parameters. 
    In this paper we present a counter-example to this paradigm  in the complex exponential family. 

    In the exponential family $f_\lambda : z \mapsto \lambda e^z$, a parameter $\lambda$ is called a \emph{Misiurewicz} parameter if $\overline{\{f_\lambda^n(0): n\geq0\} } \subset \ccc$ is a bounded, hyperbolic repelling set. The simplest example is for $\lambda = 2\pi i$. 
    For Misiurewicz parameters, the Julia set is the entire complex plane (or, regarding $f$ as a meromorphic map, the Julia set is the entire Riemann sphere). In particular, there are dense orbits. 

    A parameter $\lambda$ is called \emph{hyperbolic} if $f_\lambda$ has an attracting periodic orbit. For hyperbolic $\lambda$, almost every orbit is in the basin of attraction of the attracting periodic orbit. Any $\lambda$ with $|\lambda| < 1/e$ is hyperbolic. 
    \begin{mainthm}
        In the complex exponential family, Misiurewicz parameters are Lebesgue density points for the set of hyperbolic parameters. 
    \end{mainthm}
    By this we mean, if $\lambda_0$ is a Misiurewicz parameter, $H$ is the set of hyperbolic parameters and $m$ denotes Lebesgue measure, then 
    $$
    \lim_{r\to 0^+} \frac{m(B(\lambda_0,r)\cap H)}{m(B(\lambda_0,r))} =1.
    $$

    For Misiurewicz parameters in the exponential family, there is a conservative, $\sigma$-finite, ergodic, absolutely-continuous invariant measure. It even has a real-analytic density off the post-singular set (\cite{Me:Philsoc}).  However, it was shown in  \cite{MeBartek, KotSwi:Us} that no absolutely continuous invariant probability measure can exist. 
    To prove the main theorem, strong estimates on the dynamics of Misiurewicz maps are required. 
    The same estimates, with only a slight extension, permit one to show that for Misiurewicz maps, the Lyapunov exponent of a point exists almost nowhere. We use $Df$ to denote the derivative of $f$ with respect to the Euclidean metric. 
    \begin{thm} \label{thm:LE1}
        Let $f$ be a Misiurewicz map from the exponential family. For Lebesgue almost every $z \in \ccc$, 
        $$
    \liminf_{n \to \infty} \frac{1}{n} \log |Df^n(z)| = -\infty,
        $$
        while
        \begin{equation}\label{eq:tle1}
    \limsup_{n \to \infty} \frac{1}{n} \log |Df^n(z)| = +\infty.
    \end{equation}
\end{thm}
However, the plane is not compact and there is a choice of Riemannian metric. For any metric $\rho$, 
    let $D_\rho g$ denote the derivative of $g$ with respect to $\rho$. 
    In particular,
    for the spherical metric $\sigma$,
$$
D_\sigma g(z) := \frac{1+|z|^2}{1+ |g(z)|^2} Dg(z).$$
    \begin{thm} \label{thm:LE2}
        Let $f$ be a Misiurewicz map from the exponential family. For Lebesgue almost every $z \in \ccc$ and every Riemannian metric $\rho$,
        \begin{equation}\label{eq:tle2}
    \liminf_{n \to \infty} \frac{1}{n} \log |D_\rho f^n(z)| = -\infty
    \end{equation}
        and
        $$
    \limsup_{n \to \infty} \frac{1}{n} \log |D_\rho f^n(z)| \geq 0,$$
    while for the spherical metric $\sigma$,
        $$
    \limsup_{n \to \infty} \frac{1}{n} \log |D_\sigma f^n(z)| = 0.$$
\end{thm}
One could replace $\log$ in equation \eqref{eq:tle1} by any finite composition of logarithms and the result would still hold, though we do not quite show this (and similarly for \eqref{eq:tle2}, remembering to take absolute values); the number $4$ in Lemma~\ref{lem:QRtoL} was chosen rather arbitrarily. 

For a class of maps of the unit interval with negative Schwarzian derivative, Keller (\cite{Keller:ExpActHopf}) showed that if $\limsup_{n \to \infty} \frac1n \log |Df^n(x)|  > 0$ for almost every $x$, then there exists an absolutely continuous invariant probability measure. Theorem~\ref{thm:LE1} implies that the same does not hold generally in the exponential family, at least for the Euclidean metric. It would be interesting to know whether the following conjectures are equivalent. 
\begin{conj}
    Let $f : z \mapsto \lambda e^z$. 
    For the spherical metric $\sigma$ and Lebesgue almost every $z \in \ccc$, 
        $$
    \limsup_{n \to \infty} \frac{1}{n} \log |D_\sigma f^n(z)| = 0.$$
\end{conj}

\begin{conj}
    No map 
    from the exponential family admits an absolutely continuous invariant probability measure. 
\end{conj}


    \bigskip
    Misiurewicz parameters (and maps) have a long and involved history in the field of one-dimensional dynamics. Introduced by Misiurewicz in \cite{Mis:Mis} for smooth maps of the interval, they became the first examples where some non-trivial condition on the behaviour of critical orbits guaranteed the existence of absolutely continuous invariant probability measures. 
    This result was superseded by many more in interval dynamics, see \cite{BRSvS} for one of the latest and strongest. The concept of Misiurewicz parameter exists in other contexts too, see \cite{GPS:Mis, MisBen:Flat, Bad:Rare, MeBartek} for example. 
    The articles \cite{Jakobson:Quad, Thun:Flat, Rees:Rat, BC:Quad} 
    all find positive measure sets of non-hyperbolic parameters (indeed one admitting absolutely continuous invariant probability measures) in a neighbourhood of Misiurewicz parameters. On the other hand, Misiurewicz parameters have zero Lebesgue measure, in general (\cite{Sands:Rare, Asp:Rare, Bad:Rare}). 

    In \cite{Thun:Flat}, Thunberg finds positive measure sets of non-hyperbolic parameters in unimodal families of interval maps with critical points of type $\exp(-|x|^{-\alpha})$, provided $\alpha < 1/8$. We showed in \cite{Me:Cusp} that if $\alpha \geq 1$, no absolutely continuous invariant probability measure with positive entropy can exist, as was shown for Misiurewicz parameters in the same setting in \cite{MisBen:Flat}. 

    Structural instability of Misiurewicz parameters in the exponential family was shown in \cite{Makienko:Exp, UZ:ExpInstability, GKS:Nonrec}. For the (non-Misiurewicz) map $z \mapsto e^z$, the orbit of 0 is a (wild) metric attractor attracting almost every orbit (\cite{Rees86, Lyubich:Exp}), although generic orbits are dense. This map is a density point for hyperbolic maps in the exponential family \cite{wang2008most}. For those interested in the structure of parameter space of the exponential family (as opposed to metric properties), we refer to \cite{RempeSchl:Exp}.

    It has been suggested by  Hubbard that hyperbolic parameters should have full measure in the exponential family, see \cite{Qiu:Exp} (where it is shown that non-hyperbolic parameters have full Hausdorff dimension). This would be a stronger conjecture than density of hyperbolic parameters, and this paper and \cite{wang2008most} could be viewed as first small steps in that direction.  

    One could ask about the complex quadratic family $f_c : z \mapsto z^2 +c$ with Julia set $\cJ_c$. Rivera-Letelier (\cite{JuanRL:continuity}) showed that if $c \in \cJ_c$ and $c$ is non-recurrent, then $c$ is a density point for hyperbolic parameters (ones for which $c$ is in the basin of a periodic attractor, finite or at infinity). 
    Aspenberg in \cite{Asp:RatFund} extended this to result to more general rational maps for which the Julia set is not the whole sphere. In both these cases, basins of periodic attractors are open and dense in the sphere. It is natural in these cases to expect that, with expansion along the post-critical orbits, a small perturbation is likely to send the critical orbits into the attracting basins. 
    What is strange in the exponential setting is that Misiurewicz parameters are density points for hyperbolic parameters even though the Julia set at the Misiurewicz parameter is the whole space.

    \bigskip
    For a map $f_\lambda$ from the exponential family, $f_\lambda(z) = Df_\lambda(z)$ and $|f_\lambda(z)| = |\lambda|e^{\Re(z)}$, so $f_\lambda$ is $2\pi i$-periodic, $f_\lambda$ maps vertical lines to circles, horizontal lines to rays emanating from 0, and rectangles of height $2\pi$ onto annuli centred at 0. Points far to the left get mapped extremely close to 0, and points far to the right get mapped extremely far from 0. 

    In Appendix D of \cite{Milnor:TwoCrit}, Milnor shows that our choice of realisation of the exponential family is, in some sense, as good as any other: any entire map with asymptotic values at $\infty$ and at some finite point, and without critical points, is conjugate to a map from the exponential family. Alternative reasonable choices are $g_\kappa : z \mapsto e^z + \kappa$ and $g_\kappa : z \mapsto e^{\kappa z}$.  

         \section{Global definitions} \label{sec:glob}

    Throughout the paper, let $ f = f_{\lambda_0} : z \mapsto \lambda_0 \exp(z)$, for some Misiurewicz parameter $\lambda_0 \in \ccc$; in particular the post-singular set $$P(f) := \overline{\{f^n(0): n\geq 0\}  }$$
    is a bounded hyperbolic repelling set, so there are $n_0, \alpha > 0$ such that $|Df^{n_0}(z)| > \exp(2\alpha )$ for all $z \in P(f)$. 
    By continuity, we can fix $\varepsilon_0, \delta \in (0, \frac12 )$ 
             such that  for all $\lambda \in B(\lambda_0, \eps_0)$ and all  $z \in B(P(f), 3\delta)$, 
            $$|Df_\lambda^{n_0}(z)| > \exp(\alpha).$$ 
    Set $V := B(P(f), \delta)$.

    We shall denote by $\Delta > 1$ the modulus giving a  Koebe \emph{distortion bound} of 2, 
            that is, the minimal number such that for any 
            univalent map $g$  on $B(0, \Delta)$, the distortion of $g$ on $B(0,1)$ is bounded by $2$:   
            $$
            \sup_{y,z \in B(0,1)} \left|\frac{Dg(y)}{Dg(z)}\right| \leq 2.
            $$
            We shall repeatedly use the following fact. 
            \begin{lem} \label{lem:distnQ}
                For any simply-connected open set $U$ with $\dist(U, P(f)) > \Delta \diam(U)$, if $f^n(z) \in U$ then a neighbourhood of $z$ is mapped biholomorphically onto $U$ with distortion bounded by $2$. 
            \end{lem}
            \begin{proof}
                Since $P(f) \cap B(f^n(z), \Delta \diam(U))
                  = \emptyset$, there is a neighbourhood of $z$ mapped biholomorphically onto 
        $B(f^n(z), \Delta \diam(U))$. By definition of $\Delta$, a neighbourhood of $z$ gets mapped with distortion bounded by $2$ onto $B(f^n(z), \diam(U)) \supset U$, as required.
            \end{proof}

         The notation 
         $A(y; a_1,a_2)$  is used for the annulus 
         centred on $y \in \ccc$ with inner and outer radii of lengths $a_1, a_2$. 

        Denote by $\cR(x)$ the right half-plane 
        $$ \cR(x) := \{z \in \ccc : \Re(z) \geq x\}$$ and denote by $\cL(x)$ the left half-plane $\ccc \setminus \cR(x)$. Denote by $\cQ$ the collection of squares of the form $$\{z : 2k\pi \leq \Re(z) < (2k+2)\pi; 2j\pi \leq \Im(z) < (2j+2)\pi\},$$ for $j, k \in \Z$. Each square has diameter $2\sqrt{2} \pi < 9$.

	Further  definitions occur throughout the paper. These include constants $\delta_0 \in (0, \delta)$ and $N_1, M>0$ at the start of Section~\ref{sec:FE}; following Lemma~\ref{lem:K2},  constants $M_0, r_0$ and holomorphic motion $h$ with $h(0, \lambda) = a_K(\lambda - \lambda_0)^K$ to first order, with $a_K \in \ccc \setminus \{0\}$ and $K$ a positive integer; 
	just prior to Proposition~\ref{prop:xigrows}, $\xi_n : \lambda \mapsto f^n_\lambda(0)$. 

    \section{Structure of the proof}
    The following simple lemma guides the proof of the Main Theorem. 
    \begin{lem} \label{lem:guide}
       For each $C>0$, for all $x$ large enough, the following holds. 
       Let $\lambda \in \ccc$ satisfy $|\lambda| < x$, let $n \geq 1$ and  
       suppose that  the following holds. 
       \begin{enumerate}
           \item
       $f_\lambda^n$ maps a neighbourhood of $0$ biholomorphically onto $B(f^n_\lambda(0), 1)$;
   \item \label{en:xv}
         $B(f^n_\lambda(0), 1) \subset \cL(-e^{x + \sqrt{x}} + 3\pi)$;
     \item \label{en:derhyp}
                $|Df_\lambda^n(0)| < \exp(Ce^{x}) |\re(f_\lambda^n(0))|^4$.
        \end{enumerate}
        Then $f_\lambda$ has a hyperbolic attracting periodic orbit.
   \end{lem}
   \begin{proof}
       Set $v = f^n_\lambda(0)$ for the sake of readability. If $x$ is large, so is $-\re(v)$, by \ref{en:xv}. Thus
       \begin{equation} \label{eq:guide1}
       \exp(\re(v)/2) |\re(v)|^4 <1.
       \end{equation}
       Since $\re(v) < -e^{x+\sqrt{x}} + 3\pi$, 
       \begin{equation} \label{eq:guide2}
       \exp(\re(v)/2) ex 2\Delta \exp(C e^x) <1.
       \end{equation}
       
       Using a (Koebe) distortion bound of $2$, $f^n_\lambda$ maps $B_\lambda := B(0, 1/2\Delta |Df^n_\lambda(0)|)$ into $B(v, 1/\Delta)$. 
       Thus $f_\lambda^{n+1}(B_\lambda)$ is contained in $B(0,  r)$, with $r = e |\lambda| \exp(\re(v))$. 
           Thanks to \ref{en:derhyp} and the bound $|\lambda| < x$, 
	   $$r 2\Delta |Df^n_\lambda(0)| < e x \exp(\re(v)) 2\Delta \exp(C e^x) \re(v)^4 < 1,$$
	   the latter inequality obtained combining \eqref{eq:guide1} and \eqref{eq:guide2}.
            Hence $\overline{B(0,r)} \subset B_\lambda$.

       Since  
       $f^{n+1}_\lambda(B_\lambda) \subset B(0,r)$ and 
       $\overline{B(0,r)} \subset B_\lambda$, $f_\lambda$ has a hyperbolic attracting periodic orbit. 
   \end{proof}
   With expansion along the post-singular orbit of $f$, one can often transfer estimates for large sets of points in phase space for $f$ into estimates on the post-singular orbit of $f_\lambda$ for large sets of parameters $\lambda$. 
   It is natural, therefore, to try to find, for $f$, a large set of points which enter the left half-plane with estimates related to those of the lemma. A substantial portion of the paper comprises of this effort. 

   Note that as $\lambda$ approaches $\lambda_0$, there will be a huge build-up or derivative initially as $f_\lambda^j(0)$ spends a long time near $P(f)$. To counteract this, one needs eventually to land, eventually, far off to the left, before getting mapped extremely close to $0$ to cancel out the derivative build-up. 

   A general result (\cite{GKS:Nonrec}) implies that for exponential Misiurewicz maps (amongst others), every forward-invariant compact set is hyperbolic repelling (but with no rate estimate). This in turn implies that the measure of the set of points remaining in any bounded set for $n$ iterates is exponentially small in $n$. However, to deal with smaller parameter perturbations (for $\lambda$ closer to $\lambda_0$),  we need to find large sets of points going ever further to the left. Thus we need to know how the exponential rate depends on the size of the bounded set. 
   In Proposition~\ref{prop:hyp}, we obtain the relevant hyperbolicity estimates. 
   For an exponential Misiurewicz map the derivative grows exponentially fast, except when it is slowed by the occasional passage close to zero. 
   
   Lemma~\ref{lem:allballs} is useful and curiously does not hold for quadratic maps, say. The lemma implies a distortion bound, which together with the derivative growth estimates, allows one to relate large and small scales and hence, via a porosity-type argument, to estimate how long it takes for a large proportion of points to make a first entry into a right half-plane $\cR(x)$. Boot-strapping, we show that for most of these points, the first entry actually lands in $\cR(x + 2\sqrt{x})$. In Section~\ref{sec:FR} we study the dynamics of points in a far-right half-plane, showing that most points go further and further to the right before eventually landing far out to the left.  
   
   These results are gathered together in Proposition~\ref{lem:s0}, which says that if you start from a reasonable, reasonably large set close to $P(f)$, then most points in that set first enter a far-left half-plane in a bounded amount of time and with a derivative bound. Points may land far, far to the left, so the derivative bound depends not only on the half-plane but also on the real part of the landing location. 
   
   With the necessary ingredients in place, we pause to prove Theorems~\ref{thm:LE1} and~\ref{thm:LE2} in Section~\ref{sec:Lyap}. Only the estimate for the spherical derivative is a little complicated. 
   
    Returning to the proof of the main theorem,  we begin our parameter-based estimates. We show that points from Proposition~\ref{lem:s0} do not move too, too fast as the parameter moves. The continuation of a point is defined to have a similar orbit and the same first entry to the left half-plane. The estimates depend on the parameters considered being in a tiny ball, the time being bounded and the orbit being a certain distance away from $0$. 
    
    Going backwards from a large neighbourhood of a point in $P(f)$ to a tiny neighbourhood of $0$ (or of $\lambda_0$) is more delicate, though at this stage the arguments are well-understood. The estimates are also a little less cumbersome going forwards than backwards. For completeness, and because the desired estimates are not simple to extract from \cite{Bad:Rare} (itself based on \cite{Asp:Rare}), we include proofs of the estimates. 
    Sectors of small annuli centred on $\lambda_0$ in parameter space get mapped biholomorphically and with bounded distortion onto reasonably large sets near $P(f)$ by the map $\lambda \mapsto f^n_\lambda(0)$, for some $n$ depending on the annulus, see Lemma~\ref{lem:xiinject}. 

	With the various (parametric) derivative estimates, it is not too hard then, in Proposition~\ref{prop:main}, to match up most parameters with orbits and, using Lemma~\ref{lem:guide}, to show that for these parameters the maps are hyperbolic. 

\bigskip
    
A note of comparison, Wang and Zhang (\cite{wang2008most}) showed that $1$ is a density point for hyperbolic parameters. For $g: z \mapsto e^z$, most points near $1$ follow the orbit of $1$ out towards infinity until escaping a small neighbourhood of the orbit, then take more steps towards $+\infty$, then get mapped extremely close to $-\infty$ and then super-close to $0$ with derivative close to $0$. One only needs to study the dynamics close to infinity and along the orbit of $1$, so the arguments are relatively elegant and straightforward. Moreover, one can calculate by hand that the derivative of $\lambda \mapsto f_\lambda^n(0)$ is positive (and increasing in $n$) at $1$, so in particular it is non-zero. If this were known to be the case for Misiurewicz parameters, one would have  $K=1$ in equation \eqref{eq:hlam} and one could deal with balls instead of annuli, a minor simplification. 
One could attempt the parameter exclusion method as per \cite{wang2008most} in the current setting, and it would work as long as one remains close to $P(f)$, though something along the lines of Proposition~\ref{prop:xigrows} would of course still need to be shown. However, continuing on beyond the comfort of a neighbourhood of $P(f)$, where one has injectivity and distortion control, would likely lead to many sleepless nights.

\section{Non-uniform hyperbolicity}

In this section we gather some estimates on the growth of the derivative along individual orbits and their neighbourhoods. In the following section we will use these estimates to compare small and large scales and derive some measure estimates. Recall that $f = f_{\lambda_0}$ and $\lambda_0$ is a Misiurewicz parameter. 

    \begin{lem}
            The Julia set is $\overline{\ccc}$ and $|\lambda_0| \geq 1/e$. 
        \end{lem}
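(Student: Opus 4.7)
The plan is to exhaust all types of Fatou component and exclude each; this gives $F(f) \cap \ccc = \emptyset$ and hence $J(f) = \cbar$ on the Riemann sphere. Since $f$ has only two singular values (the asymptotic values $0$ and $\infty$), the Goldberg--Keen theorem rules out wandering domains, so every component of $F(f)$ is eventually periodic. Fatou's classification then reduces the problem to excluding attracting basins, parabolic basins, Siegel disks and Baker domains.

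Baker domains are ruled out because the singular set $\{0\}$ of $f$ in $\ccc$ is bounded, so $f$ lies in the Eremenko--Lyubich class $\mathcal{B}$, which admits no Baker domains. Attracting and parabolic basins are ruled out because the immediate basin of each such cycle contains the orbit of some singular value in its closure, and the only candidate orbit $\{f^n(0)\}$ has closure $P(f)$ hyperbolic repelling, so accumulates on no non-repelling cycle.

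The Siegel disk case is the main obstacle. Suppose $D$ were a Siegel disk of period $p$. By the class $\mathcal{B}$ version of the Fatou--Shishikura boundary theorem, $\partial D \subset \omega(0) \subset P(f)$, and since $P(f)$ is bounded so is $D$. By continuity of $Df^{n_0}$, the lower bound $|Df^{n_0}| \geq e^\alpha$ on $P(f)$ extends to $|Df^{n_0}| \geq e^{\alpha/2}$ on some open neighborhood $W \supset P(f)$. For $z \in D$ sufficiently close to $\partial D$, the Siegel conjugacy forces the full forward orbit to remain in $W$, because the $f^p$-invariant curves $\phi(\{|u|=r\})$ approach $\partial D$ as $r \to 1^-$. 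Iterating the local expansion yields $|Df^{kn_0}(z)| \geq e^{k\alpha/2}$, which is unbounded in $k$. On the other hand, $\{f^{pn}|_D\}$ is a normal family of bounded holomorphic self-maps of $D$, so Cauchy's estimate produces the $n$-independent bound $|Df^{pn}(z)| \leq \diam(D)/\dist(z, \partial D)$, contradicting the exponential lower bound.

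The technical delicacy is confining orbits of points near $\partial D$ in $D$ to the neighborhood $W$; cleanly, one reduces via the Siegel conjugacy $\phi: \mathbb{D} \to D$ to showing that $\phi(\{|u|=r\}) \to \partial D$ in Hausdorff distance as $r \to 1^-$, which relies on standard boundary behaviour of the Riemann map onto the bounded domain $D$.
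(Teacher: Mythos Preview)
Your argument is essentially correct, but it takes a very different route from the paper. The paper simply invokes Theorems~3--5 of Eremenko--Lyubich, which directly give $J(f)=\cbar$ once the post-singular set is uniformly expanding; you instead reconstruct the classification argument by hand, eliminating each type of periodic Fatou component. Your approach is more self-contained and shows explicitly where the repelling hypothesis enters (the Siegel-disk step), while the paper's proof is a one-line citation. Both are valid; yours is longer but makes the mechanism transparent.

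A few small points. First, $f_\lambda$ has a single finite singular value, the asymptotic value $0$; infinity is the essential singularity, not a second asymptotic value. This does not affect your argument. Second, you omit Herman rings from your list; for transcendental entire maps these are excluded by a short maximum-modulus argument (an invariant curve in the ring bounds a disk $D$ with $f^p(D)=D$, forcing $D\subset F(f)$ by Montel, contradicting that the inner boundary of the ring lies in $J(f)$), so mention this. Third, the result $\partial D\subset P(f)$ for a Siegel disk is classical (Fatou, extended to entire functions), not ``Fatou--Shishikura''; the latter concerns counting non-repelling cycles. Finally, in your last paragraph you only need that the invariant circles $\phi(\{|u|=r\})$ eventually lie in any given neighbourhood of $\partial D$, which follows immediately from properness of the Riemann map; full Hausdorff convergence to $\partial D$ is stronger than required and need not hold in general.
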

        \begin{proof}
            The first statement follows immediately from Theorems~3-5 of \cite{EremenkoLyubich:Entire}, since the post-singular set is uniformly repelling. 
            Were $|\lambda_0| < 1/e$, then  $\overline{f(B(0,1))} \subset B(0,1)$ and $f$ would have an attracting fixed point.  
        \end{proof}
        \begin{lem} \label{fact:nice}
        For each $z$ in $\ccc \setminus P(f)$, there are arbitrarily small neighbourhoods $U_z$ on which the first return map $\phi$ to $U_z$ is expanding (that is, $|D\phi| > \gamma_z > 1$). 
    \end{lem}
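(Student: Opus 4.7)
The plan is a Mañé-style normal family argument exploiting the fact that $z \notin P(f)$. I would choose $r > 0$ small enough that $\overline{B(z, 2r)} \cap P(f) = \emptyset$ and set $U_z := B(z, r)$. For each first return $w \in U_z$ at time $k$, I claim the inverse branch $g$ of $f^k$ sending $f^k(w)$ back to $w$ extends univalently to $B(z, 2r)$: iteratively pulling $B(z, 2r)$ back under $f$ yields a chain of simply connected domains (since $f$ is a covering map $\mathbb{C} \to \mathbb{C} \setminus \{0\}$), each avoiding the asymptotic value $0$, because if the $j$-th pullback contained $0$ then the next-level pullback would contain $\lambda_0 = f(0)$, and inductively $f^m(0) \in B(z, 2r)$ for some $m$, contradicting $P(f) \cap B(z, 2r) = \emptyset$. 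Let $\mathcal{G}$ be the family of all such inverse branches. No $g \in \mathcal{G}$ can take a value in $P(f)$, otherwise $y = f^k(g(y)) \in P(f)$ would force $y \in P(f)$; since $|P(f)| \geq 2$ and the branches map into $\mathbb{C}$, the omitted set contains at least three points in $\hat{\mathbb{C}}$ and Montel's theorem makes $\mathcal{G}$ a normal family on $B(z, 2r)$.

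Next I would argue by contradiction. If no $\gamma_z > 1$ satisfies $|D\phi| > \gamma_z$, I can pick $g_n \in \mathcal{G}$ and $y_n \in \overline{U_z}$ with $|Dg_n(y_n)| \to 1$. Along a subsequence $g_n$ converges locally uniformly to some $g_\infty$; by Cauchy's estimates $|Dg_\infty(y_\infty)| \geq 1$, so $g_\infty$ is non-constant and, being a uniform limit of univalent maps, univalent by Hurwitz. Its image $W := g_\infty(B(z, 2r))$ is then an open set disjoint from $P(f)$.

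The hard part will be turning the existence of such a $W$ into a contradiction. Hurwitz's theorem gives that every compact $K \subset W$ sits in $g_n(B(z, 2r))$ for all large $n$, so $f^{k_n}(K) \subset B(z, 2r)$ along the subsequence. To rule this out I would leverage the uniform expansion $|Df^{n_0}| > e^\alpha$ on $P(f)$ together with the explicit form $|Df_\lambda(z)| = |\lambda_0| e^{\re(z)}$: orbit segments passing through a suitable neighborhood of $P(f)$ accumulate exponential expansion, segments with large real part have enormous derivative, and segments with very negative real part land close to $0 \in P(f)$ on the next step. Combined with Koebe distortion on the univalent branches $g_n$ over $B(z, 2r)$, this should force the diameters of $g_n(U_z)$ (and hence $|Dg_n|$ on $U_z$) to shrink exponentially in $k_n$, contradicting $|Dg_n(y_n)| \to 1$ and producing the required expansion constant $\gamma_z > 1$.
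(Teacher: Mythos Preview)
The paper does not prove this lemma in-text; it simply cites part~(iii) of Lemma~11 in \cite{Me:Philsoc}. So there is no detailed argument in the present paper to compare against.

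Your setup is sound: the inverse branches extend univalently to $B(z,2r)$ for the reasons you give, they omit the forward-invariant set $P(f)$, and Montel applies. The problem is the contradiction in your final paragraph. You propose to force $\diam(g_n(U_z))\to 0$ by combining (i) expansion on a neighbourhood of $P(f)$, (ii) large derivative where $\Re$ is large, and (iii) the observation that very negative $\Re$ sends points near $0\in P(f)$ on the next step. These three regimes are not exhaustive: an orbit segment can remain for arbitrarily long in a bounded region \emph{disjoint} from any fixed neighbourhood of $P(f)$, and along such a segment your ingredients give no derivative growth whatsoever. Producing expansion there is exactly the non-trivial content one needs; in this very paper the corresponding growth statement (Lemma~\ref{lem:hyp}) is proved \emph{using} the lemma you are trying to establish, so your sketch, as it stands, begs the question.

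A clean non-circular finish replaces the ad hoc casework with the hyperbolic metric on $\Omega:=\C\setminus P(f)$. Since $P(f)\subsetneq f^{-1}(P(f))$ and $f:\C\setminus f^{-1}(P(f))\to\Omega$ is a holomorphic covering, $f$ strictly expands $\rho_\Omega$, with expansion factor bounded below by some $\gamma_1>1$ on the compact set $\overline{U_z}$. Any first return then has $\rho_\Omega$-derivative at least $\gamma_1$ (one guaranteed step starts in $U_z$; the remaining steps are non-contracting). For $r$ small the ratio $\rho_\Omega(\phi(w))/\rho_\Omega(w)$ is uniformly close to $1$ on $U_z$, so the Euclidean derivative of $\phi$ exceeds $1$ as well. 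This delivers the conclusion without the gap.
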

        \begin{proof}
            This is part (iii) of \cite[Lemma~11]{Me:Philsoc}, knowing that the Julia set is $\ccc$.
            \end{proof}

    \begin{lem} \label{lem:gammadfk}
            Given any $\theta >0$, there is a $\beta \in (0,1)$ such that, for any $z \in \ccc$ and $k \geq 0$, if $\dist(f^k(z), P(f)) \geq \theta$ then $|Df^k(z)| > \beta$. 
        \end{lem}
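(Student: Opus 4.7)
My plan is to combine the Schwarz--Pick pullback estimate on the hyperbolic Riemann surface $\Omega := \ccc \setminus P(f)$ with a case analysis exploiting the $2\pi i$-periodicity of $f$. Since $P(f)$ is forward invariant, $f$ restricts to a holomorphic covering $f \colon V \to \Omega$ with $V := \Omega \setminus f^{-1}(P(f))$. Applying Schwarz--Pick to the inclusion $V \hookrightarrow \Omega$, together with the fact that $f \colon V \to \Omega$ is a local isometry in the respective Poincar\'e metrics, gives $|Df(w)| \geq \rho(w)/\rho(f(w))$ for $w \in V$, where $\rho$ denotes the Poincar\'e density of $\Omega$. Since $\dist(f^k(z), P(f)) > \delta$ and $P(f)$ is forward invariant, all iterates $y_1, \dots, y_k$ lie in $\Omega$, and telescoping gives
\[
|Df^k(z)| \geq \frac{\rho(z)}{\rho(y_k)}.
\]
Comparison with the embedded disk $B(y_k,\delta) \subset \Omega$ gives $\rho(y_k) \leq 2/\delta$, reducing the task to a uniform lower bound on $\rho(z)$.

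By the $2\pi i$-periodicity of $f^k$ and $|Df^k|$, I may assume $\Im z \in [0, 2\pi]$, leaving only $\Re z$ possibly unbounded. I split into three subcases. In Subcase~A ($|\Re z| \leq R$ for a suitable $R$), $z$ lies in a compact cylinder intersected with $\Omega$, on which $\rho$ has positive infimum $c(R)$, yielding $|Df^k(z)| \geq c(R)\delta/2$ directly. In Subcase~B ($\Re z > R$), one has $|y_1| = |\lambda_0| e^{\Re z} \geq 1$ for $R \geq -\log|\lambda_0|$, and the identity $|Df^k(z)| = |y_1| \cdot |Df^{k-1}(y_1)|$ closes the bound by strong induction on $k$ applied to the pair $(y_1, k-1)$. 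Subcase~C ($\Re z < -R$), in which $y_1$ lies very close to $0 \in P(f)$, requires a separate shadowing analysis.

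For Subcase~C, write $\eta := |y_1|$. A linearized shadowing comparison between the orbits of $y_1$ and of $0$ gives $|y_j - f^{j-1}(0)| \approx |Df^{j-1}(0)|\eta$, valid up to a time $J$ at which this deviation reaches scale $\delta$. A telescoping control of the log multiplicative error --- via the sum $\sum_{j=2}^{J} |Df^{j-2}(0)|\eta$, which by uniform expansion along $P(f)$ is a convergent geometric series dominated by its last term and hence of order $\delta$ --- then shows that
\[
\prod_{j=1}^{J}|y_j| \asymp \eta \cdot |Df^{J-1}(0)| \asymp \delta
\]
uniformly in $\eta$. Applying Subcase~A to the sub-orbit $(y_J, k - J)$, which lies in the bounded set $B(P(f), \delta)$, then yields $|Df^k(z)| \geq c' \delta^2$, so one may take $\beta = \min(c(R)\delta/2,\, c'\delta^2,\, 1)$.

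The main obstacle is the multiplicative error control in Subcase~C: the ratios $|y_j|/|f^{j-1}(0)|$ can degenerate if the post-singular orbit of $0$ returns close to $0$. This is managed by choosing $J$ within an $n_0$-block window where $|f^{J-1}(0)|$ stays bounded below by a fixed positive constant; such a $J$ exists because the identity $|Df^{n_0}(0)| = \prod_{j=1}^{n_0} |f^j(0)| > e^{\alpha}$ forbids all $n_0$ consecutive values of $|f^j(0)|$ from being simultaneously small. With this choice, the telescoping sum for the multiplicative error remains of scale $\delta$, and the shadowing estimate is robust.
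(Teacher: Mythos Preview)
Your argument is correct but considerably more elaborate than the paper's. The paper's proof rests on two observations: (i) since $B(f^k(z),\delta/2)$ avoids $P(f)$, an inverse branch of $f^k$ pulls it back biholomorphically to a neighbourhood $W$ of $z$, and by the Koebe one-quarter theorem $W$ contains a ball of radius comparable to $\delta/|Df^k(z)|$ about $z$; (ii) since $f$ is $2\pi i$-periodic, $f^k$ cannot be univalent on any ball of radius exceeding $\pi$, so $W$ contains no such ball and hence $|Df^k(z)| \gtrsim \delta/\pi$. No hyperbolic metric, no case split on $\Re z$, no induction, no shadowing. Your Schwarz--Pick step is valid (indeed $f^{-1}(\Omega) \subset \Omega$ because $P(f)$ is forward invariant, so $f$ genuinely expands the hyperbolic metric of $\Omega$), and your three-case split correctly compensates for the decay of $\rho(z)$ at infinity; but the ``main obstacle'' you flag in Subcase~C is illusory: since $P(f)$ is bounded, $\Re f^{j-2}(0)$ is uniformly bounded below, so $|f^{j-1}(0)| = |\lambda_0|\, e^{\Re f^{j-2}(0)}$ is already bounded away from zero for every $j \geq 2$, and your multiplicative-error sum $\sum \eta\,|Df^{j-2}(0)|$ converges without any $n_0$-block gymnastics. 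What the paper's argument buys is the wholesale elimination of your case analysis: periodicity of $f$ caps the size of any domain of univalence of $f^k$, and Koebe converts that cap directly into the derivative lower bound.
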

        \begin{proof} 
            By Lemma~\ref{lem:distnQ}, some neighbourhood $W$ of $z$ is mapped biholomorphically onto $B(f^k(z), \theta/\Delta)$ with distortion bounded by  $2$. But $f$ is not univalent on any ball of radius $\pi$, so $W$ cannot strictly contain a ball of radius $\pi$. Combining these two facts, the derivative of $f^k$ on $W$ cannot be too small. 
            \end{proof}
            
            \begin{lem} \label{lem:Mdev}
                There is an $M>3$ such that, for all $z \in \ccc$ and $k \geq 1$, if $|f^k(z)| \geq M$ then $|Df^k(z)| > 3$. 
            \end{lem}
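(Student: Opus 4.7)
The plan is to extract a quantitative version of the argument used to prove Lemma~\ref{lem:gammadfk}, choosing the distance parameter large enough that the derivative lower bound exceeds $3$. Since the post-singular set $P(f)$ is bounded, set $D := \sup\{|w| : w \in P(f)\} < \infty$ and take $M := D + 13\pi$; clearly $M > 3$, and the hypothesis $|f^k(z)| \geq M$ gives $\dist(f^k(z), P(f)) \geq 13\pi$.

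The central step is to pull back the ball $B := B(f^k(z), 13\pi)$ univalently under $f^k$ at $z$. This is legitimate because the only singular value of $f = f_{\lambda_0}$ is the asymptotic value $0$, so the singular values of $f^k$ lie in $\{f^j(0) : 0 \leq j < k\} \subset P(f)$, and $B$ is disjoint from $P(f)$ by the choice of $M$. Let $W$ be the resulting open neighbourhood of $z$, so that $f^k : W \to B$ is a conformal bijection. Because $f^k$ is univalent on $W$, so is $f$; since $f$ is $2\pi i$-periodic, $f$ is not univalent on any open ball of radius exceeding $\pi$, and hence $W$ contains no open ball of radius greater than $\pi$.

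Now apply the Koebe quarter theorem to the conformal map $h : B(0,1) \to W$ defined by $h(w) := (f^k|_W)^{-1}(f^k(z) + 13\pi \, w)$. Since $h(0) = z$ and $|h'(0)| = 13\pi/|Df^k(z)|$, one obtains $W \supset B(z, 13\pi / (4|Df^k(z)|))$. Combining this with the ceiling $\pi$ on the radius of any ball contained in $W$ yields $|Df^k(z)| \geq 13/4 > 3$, as required.

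This argument is essentially Lemma~\ref{lem:gammadfk} with the distance parameter tuned explicitly; I do not expect any genuine obstacle beyond the bookkeeping of constants, in particular matching the pullback radius (large enough to force a derivative bound exceeding $3$) against the period $2\pi i$ of $f$ (which caps the radius of any univalent pre-image).
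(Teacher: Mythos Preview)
Your proof is correct but takes a different route from the paper. You redo the Koebe argument behind Lemma~\ref{lem:gammadfk} with explicit constants: pulling back a ball of radius $13\pi$ and using that the pre-image $W$ cannot contain a ball of radius exceeding $\pi$ forces $|Df^k(z)| \geq 13/4 > 3$ via the quarter theorem. The paper instead invokes Lemma~\ref{lem:gammadfk} as a black box together with the exponential-specific identity $Df = f$: since $|f^k(z)| \geq M$ and $M$ is chosen large enough that $f(B(P(f),\delta)) \subset B(0,M)$, the point $f^{k-1}(z)$ lies at distance at least $\delta$ from $P(f)$, so Lemma~\ref{lem:gammadfk} gives $|Df^{k-1}(z)| > \beta$, and then the chain rule yields $|Df^k(z)| = |f^k(z)|\,|Df^{k-1}(z)| \geq M\beta > 3$. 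The paper's argument is a two-line reduction exploiting $Df=f$; yours is self-contained and makes the constants explicit, at the cost of repeating the Koebe step already carried out in Lemma~\ref{lem:gammadfk}.
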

        \begin{proof} 
            Let $\theta >0$ and let $\beta$ be given by  Lemma~\ref{lem:gammadfk}, 
            Take $M > 3/\beta$ sufficiently large that $f(B(P(f),\theta)) \subset B(0,M)$. If $f^k(z)\geq M$ then $\dist(f^{k-1}(z), P(f)) \geq \theta$, so $|Df^{k-1}(z)| > \beta$. But $|Df^k(z)| = |f^k(z)||Df^{k-1}(z)| \geq M\beta > 3$. 
            \end{proof}
            \begin{lem} \label{lem:Mdev2}
                Given $M_1 >0$ there is an $M_2>0$ such that, for all $z \in \ccc$ and $k \geq 2$, if $|f^k(z)| \geq M_2$ then $|Df^k(z)| > M_1 |f^k(z)|$. 
            \end{lem}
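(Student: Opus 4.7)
I will follow the strategy of Lemma~\ref{lem:Mdev}, observing that the constant $\beta$ from Lemma~\ref{lem:gammadfk} can be made arbitrarily large by taking $\delta$ large. Since $Df=f$ in the exponential family, the chain rule gives
$$
|Df^k(z)| \;=\; |f^k(z)|\cdot|Df^{k-1}(z)|,
$$
so the target inequality $|Df^k(z)|>M_1|f^k(z)|$ is equivalent to $|Df^{k-1}(z)|>M_1$, which is precisely the kind of lower bound Lemma~\ref{lem:gammadfk} supplies, provided we can arrange $\beta>M_1$ for some $\delta$.

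To see that $\beta$ can indeed be made arbitrarily large, I would revisit the proof of Lemma~\ref{lem:gammadfk}. The univalent pullback $W$ of $B(f^{k-1}(z),\delta/2)$ under $f^{k-1}$ sits inside a horizontal strip of height $2\pi$ (otherwise $f$ itself would fail to be univalent on $W$), so $W$ cannot contain a ball of radius $\pi$. On the other hand, Koebe distortion applied to the inverse branch of $f^{k-1}$ forces $W$ to contain a ball about $z$ of radius comparable to $\delta/|Df^{k-1}(z)|$. Comparing the two shows $|Df^{k-1}(z)|\gtrsim \delta$, so $\beta$ grows (at worst linearly) with $\delta$. In particular, given $M_1>0$, we may fix $\delta>0$ so that the corresponding $\beta$ exceeds $M_1$.

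With that $\delta$ fixed, the set $f(B(P(f),\delta))$ is bounded since $P(f)$ is; let $M_2$ be larger than $\sup_{w\in B(P(f),\delta)}|f(w)|$. Then $|f^k(z)|\geq M_2$ forces $f^{k-1}(z)\notin B(P(f),\delta)$, so Lemma~\ref{lem:gammadfk} yields $|Df^{k-1}(z)|>\beta>M_1$, and the chain-rule identity above completes the argument. The only step requiring extra care beyond the proof of Lemma~\ref{lem:Mdev} is confirming that $\beta=\beta(\delta)$ can be taken proportional to $\delta$; this comes from the Koebe-versus-horizontal-strip comparison already implicit in the original proof, and is really the only obstacle to the direct recycling of the earlier argument.
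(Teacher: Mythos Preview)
Your argument is correct and matches the paper's intended proof: the paper simply says ``the same line of proof gives'' this lemma, and the only extra ingredient---which you identify correctly---is that the constant $\beta$ from Lemma~\ref{lem:gammadfk} can be pushed above any given $M_1$ by enlarging $\delta$, via the Koebe $1/4$ theorem together with the $2\pi i$-periodicity of $f$. One minor slip: a simply connected domain on which $f$ is univalent need not lie in a horizontal strip of height $2\pi$ (a thin diagonal tube is a counterexample), but all you actually use is that such a domain cannot contain a ball of radius greater than $\pi$, and that follows directly from periodicity.
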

        \begin{proof} 
            Take $M_2$ large enough that $|f^{k-1}(z)|$ must be larger than $M_1$ and $|f^{k-2}(z)|$ must be larger than $M$, where $M$ comes from Lemma~\ref{lem:Mdev}.
            \end{proof}


    \begin{lem} \label{lem:mindev}
        There is some $\beta_1 >0$ such that, for each $z \in \ccc$ and $k \geq 1$, 
        \begin{equation} \label{eq:mindeveq}
            |Df^k(z)| \geq \beta_1 \inf_{1 \leq j \leq k} |f^j(z)|.\end{equation}
    \end{lem}
        \begin{proof}
            Let $n\leq k$ be maximal such that $|Df^n(z)| \geq 1$. 
            If 
            $n \geq 1$ then 
            \begin{equation*} 
                |Df^n(z)| 
            = \prod_{1\leq j \leq n} |f^j(z)| 
                \geq \inf_{1 \leq j \leq n} |f^j(z)|. 
            \end{equation*}
            Note that if $n = k$, \eqref{eq:mindeveq} holds with $\beta_1 =1$. Assume now that $n < k$.  
            Let $M$ be given by Lemma~\ref{lem:Mdev}, so $|f^j(z)| < M$ for $j = n+1, \ldots k$. 
             By the chain rule and choice of $n$, $|Df^k(z)| \geq |Df^{k-n}(f^n(z))|.$  
             It now suffices to prove that 
            $$|Df^{k-n}(f^n(z))| \geq \beta_1 \inf_{n+1 \leq j \leq k} |f^j(z)|. 
            $$
            Rewriting, it suffices to prove the lemma under the assumption $|f^j(z)| < M$ for $j = 1, \ldots, k$. 

            Recall that $V$ is globally defined in Section~\ref{sec:glob} as a small neighbourhood of the postsingular set. We can divide the orbit into three pieces: a first stretch which ends outside $V$,  a final stretch spent entirely inside $V$, and in between a single iterate. So, 
            let $n \leq k$ be maximal such that $f^n(z) \notin V$, if it exists, otherwise set $n=0$. 
            Let $\beta \in (0,1)$ be given by Lemma~\ref{lem:gammadfk}.  
            Then 
            $$|Df^n(z)| \geq \beta \geq \beta |f^n(z)|/M,$$
            by assumption. If $n=k$, we are done (provided $\beta_1 \leq \beta/M$). So assume $n <k$. 
            Now $f^{n+1}(z), \ldots, f^k(z) \in V$, by definition of $n$. Since $|Df^{n_0}| > 1$ on $V$, it follows that $|Df^{k-n-1}(f^{n+1}(z))|$ is bounded below by the constant $$\beta_2 := \min_{1\leq j<n_0}\inf_{y\in V}{|Df^j(y)|} >0.$$
            Then $|Df^k(z)| \geq \beta \beta_2 |Df(f^n(z))|$. Taking $\beta_1 := \min (\beta/M, \beta \beta_2 \beta_3)$ works, where $\beta_3 = |\lambda_0|e^{-M}$.
        \end{proof}

        In the following proposition, we use exponential growth when one remains in a neighbourhood of $P(f)$, exponential growth when one remains in a bounded region disjoint from that neighbourhood, plus absolute growth if an iterate lands outside a large bounded region, to give some sort of non-uniform hyperbolicity statement for Misiurewicz maps. 
        
    \begin{prop} \label{prop:hyp}
        There are $N, N_1 >0$  such that for each $z$ there is a $j \leq N+ N  |\log|f(z)||$ with $|Df^j(z)| > 3$ and $|Df^i(z)|,|f^i(z)| \leq N_1 + |f(z)| $ for $i=1, \ldots, j$. 
    \end{prop}

    \begin{proof}
        We can assume $|f(z)| \leq 3$, otherwise one can simply take $j=1$.
        Set $p := 1+ n_0 \lceil (2 -\log|f(z)|  )/\alpha \rceil$,   and
        note that $p$ is bounded by an affine function of $|\log |f(z)||$.  
        Let $k \geq 1$ be minimal such that $f^k(z) \notin V$. 
        If $k \geq p$, 
        $$|Df^{p}(z)| \geq |f(z)|\exp(p\alpha/n_0) \geq e^2$$  
        and, setting $j=p$, we are done, for appropriately chosen $N$. 

        Otherwise, $1 \leq k<p$. Since $f^k(z) \notin V$, 
        Lemma~\ref{lem:gammadfk} provides a constant $\beta > 0$ (for $\theta = \delta$, say) for which $|Df^k(z)| > \beta$. Moreover, $f^k(z) \in Z := (B(0,3) \cup f(V))\setminus V$. Therefore 
         it suffices to show that there is an $N$ such that, for each $y\in Z$, there is a $j \leq N$ with $|Df^j(y)| > 3/\beta$. 

         By Lemma~\ref{lem:Mdev2}, we can choose $N_1$ large enough that $Z \subset B(0,N_1)$ (trivially) and that, for any $z \in \ccc$, if $|f^n(z)| \geq N_1$ then $|Df^n(z)| > 3/\beta$.
         Thus we  restrict our attention to those $y$ which do not leave $B(0,N_1)$ for the first $N$ iterates, for some large $N$ to be defined.  
         We can cover the compact set $W:=\overline{B(0,N_1)}\setminus V$ by a finite collection of balls $\{W_l\}_{l=1}^L$ on which the first return map is expanding, by Lemma~\ref{fact:nice}, so there is a $\gamma >1$ and each return map $\phi_l : W_l \to W_l$ has derivative greater than $\gamma$.    

         Let $q,r \in \N$ satisfy $\beta \gamma^q >3/\beta$ and $\beta e^{r\alpha } |\lambda_0|e^{-N_1} >   3/\beta$. Set $N := qLrn_0$. 

        Consider the successive passages of $y$ into $W$, at times $k_0, k_1, \ldots, $ say. By time $k_{qL}$, if such exists, there must be some $W_l$ which is passed through at least $q$ times. 
        Then $|Df^{k_{qL}}(y)| > \beta \gamma^q > 3/\beta$ and if $k_{qL} \leq N$ we are done. 

        Otherwise, at some point the orbit must spend a long period, at least $rn_0$ long, in $B(0,N_1)\setminus W \subset V$. That is, there is some $a \geq 0$ 
        such that $f^l(y) \in V$ for $l = a+1, \ldots, a+ rn_0 < N$ and  
        such that $a = 0$ or $f^a(y) \in W$. Since $f^a(y) \in B(0,N_1)$,  $|f^{a+1}(y)| \geq |\lambda_0|e^{-N_1}$.  
        But by definition of $V$, 
        $$
        |Df^{rn_0}(f^{a+1}(y))| \geq \exp( r\alpha).$$
        The choice of $r$ entails $\beta |Df^{rn_0}(f^{a+1}(y))||f^{a+1}(y)| > 3/\beta $, so $|Df^{a +1 + rn_0}(y)| > 3/\beta$.  Noting that $a +1 + rn_0 \leq N$, we conclude the proof. 
    \end{proof}

        Recall that $\Delta >1$ is the constant giving a Koebe distortion bound of 2. 
        The following two lemmas are stated for maps in a neighbourhood of $f$ which are uniformly expanding on $B(P(f), 3\delta)$, see Section~\ref{sec:glob}. 

        \begin{lem} \label{lem:Vballs}
            Given $\varepsilon >0$ there is a $\delta_0 \in (0, \delta)$ such that the following holds. Let $\lambda \in B(\lambda_0, \eps_0)$. 
            If $f_\lambda^{k}(z) \in V$  for all $0 \leq k \leq p$ then there is a neighbourhood $W_p$ of $z$, contained in $B(z,\varepsilon/|Df_\lambda^p(z)|)$, mapped biholomorphically by $f_\lambda^{p}$ onto $B(f_\lambda^{p}(z), \Delta \delta_0)$. 
        \end{lem}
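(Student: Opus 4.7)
The plan is to pull back the ball $B(f^p(z),\Delta\delta_0)$ along an inverse branch of $f^p$, using the Koebe space supplied by the preceding lemma and then applying the Koebe distortion lemma in the form defining the constant $\Delta$.

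First I would apply the preceding lemma to obtain a univalent inverse branch $g$ of $f^p$ on a ball $B(f^p(z),R)$ with $g(f^p(z))=z$ and $g(B(f^p(z),R))\subset B(z,2\delta)$. The preceding lemma is stated for iterate counts of the form $n_0 p$; a general $p$ is handled by writing $p=n_0 q+r$ with $0\le r<n_0$, applying the preceding lemma at time $n_0 q$ to obtain Koebe radius $2\delta$ at $f^{n_0 q}(z)$, and then noting that the remaining $r<n_0$ iterates shrink the available Koebe radius at $f^p(z)$ by at most a bounded factor depending on $\sup_V|Df|$ (using univalence of $f^r$ on a sufficiently small subdisc together with the Koebe $\tfrac14$-theorem, which is valid since $V$ is bounded and $\delta$ is small). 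Thus $R>0$ may be chosen independently of $z$ and $p$.

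Next I would set $V_k := g(B(f^p(z),\Delta\delta_0))$ and choose $\delta_0$ so small that $\Delta^2\delta_0 < R$. By the defining property of $\Delta$, the distortion of $g$ on $B(f^p(z),\Delta\delta_0)$ is then bounded by $2$, hence
$$
V_k\subset B\bigl(z,\,4\Delta\delta_0/|Df^p(z)|\bigr).
$$
A further shrinking with $4\Delta\delta_0<\varepsilon$ delivers the required containment, reading the subscript $k$ in the statement as labelling the iterate count $p$. Biholomorphicity of $f^p\colon V_k\to B(f^p(z),\Delta\delta_0)$ is immediate from the construction.

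No serious obstacle arises: every constant in the argument ($\delta$, $n_0$, $\sup_V|Df|$, $\Delta$) depends only on the fixed map $f$ and the fixed neighbourhood $V$, so a single threshold $\delta_0(\varepsilon)>0$ works uniformly for all admissible $z$ and $p$. The one point requiring mild care is the handling of iterate counts that are not multiples of $n_0$, but since the discrepancy is at most $n_0-1$ iterates and $|Df|$ is uniformly bounded on $V$, this costs only a fixed multiplicative constant.
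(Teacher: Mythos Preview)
Your argument is correct and follows essentially the same route as the paper's proof, which is the one-line sketch ``Use the preceding lemma to reduce it to fewer than $n_0$ iterates, and use that $V$ is bounded, and bounded distortion of $f$ on $V$.'' You have simply fleshed out that sketch: pull back $B(f^p(z),2\delta)$ via the previous lemma at the largest multiple of $n_0$ not exceeding $p$, absorb the remaining $<n_0$ iterates using the uniform bound on $|Df|$ over the bounded set $V$, and then choose $\delta_0$ small enough that the Koebe distortion constant $\Delta$ yields the containment $V_k\subset B(z,\varepsilon/|Df^p(z)|)$.
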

        \begin{proof} 
            First we consider a bounded number of iterates. 
            The distortion of $f_\lambda$ on $V$ is uniformly bounded (independently of $\lambda \in B(\lambda_0, \eps_0)$). Therefore, given $\varepsilon >0$ there is a $\delta_0 \in (0, \varepsilon/2\Delta)$ such that, if $y, f_\lambda(y), \ldots, f_\lambda^j(y) \in V$ and $0 \leq j \leq n_0-1$, there is a neighbourhood of $y$ contained in $B(y,2\delta)$ which is mapped biholomorphically by $f_\lambda^j$ onto $B(f_\lambda^j(y), \delta_0 \Delta^2)$. 
            
            Meanwhile, $|Df_\lambda^{n_0}| > 1$ on $B(V, 2\delta)$. It follows that, writing $p = an_0 +j$ with $a,j \in \N$ and $0 \leq j \leq n_0-1$, a neighbourhood of $z$ is mapped biholomorphically by $f_\lambda^{an_0}$ onto $B(f_\lambda^{an_0}(z), 2\delta)$. Combined with the previous paragraph, we deduce that a neighbourhood of $z$ is mapped by $f_\lambda^p$ biholomorphically onto $B(f_\lambda^p(z), \delta_0 \Delta^2)$. Shrinking the target, a neighbourhood $W_p$ of $z$ is mapped 
            by $f_\lambda^p$ biholomorphically with distortion bounded by $2$ onto $B(f_\lambda^p(z), \delta_0 \Delta)$. Because of the distortion bound, 
            $$W_p \subset B(z, 2 \delta_0 \Delta/ |Df_\lambda^p(z)) \subset B(z, \varepsilon/|Df_\lambda^p(z)|),$$ as required.
        \end{proof}
        \begin{lem}\label{lem:preVballs}
            There exists $\delta_0>0$ such that if $\lambda \in B(\lambda_0, \eps_0)$, if $f_\lambda^j(z) \in V$ for $j =1,\ldots, k$ and if $|Df_\lambda^k(z)| > 1$, 
            then there is a neighbourhood $U$ of $z$ mapped biholomorphically by $f_\lambda^k$ onto $B(f_\lambda^k(z), \Delta \delta_0)$ with $U \subset B(z, \delta)$. 
        \end{lem}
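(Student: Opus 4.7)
My plan is to reduce the statement to Lemma~\ref{lem:Vballs} by shifting one iterate. Since $f^j(z) \in V$ for $j = 1, \ldots, k$, the hypotheses of Lemma~\ref{lem:Vballs} hold at the point $f(z)$ with $p = k-1$. Applied with some small $\varepsilon_1 > 0$ to be chosen, that lemma furnishes a $\delta_1 > 0$ and a neighbourhood $W \ni f(z)$, contained in $B(f(z), \varepsilon_1 / |Df^{k-1}(f(z))|)$, such that $f^{k-1} : W \to B(f^k(z), \Delta \delta_1)$ is a biholomorphism. I will set $\delta_0 := \delta_1$ and obtain $U$ by pulling $W$ back through a single application of $f$.

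The key step is this final pullback. Since $f : \ccc \to \ccc \setminus \{0\}$ is a covering map (no critical points, sole omitted value $0$), univalent inverse branches of $f$ exist on any simply connected domain avoiding $0$. The one real subtlety is that $0 \in P(f) \subset V$, so a priori $W$ could wrap around $0$ and obstruct univalent pullback. This is precisely where the hypothesis $|Df^k(z)| > 1$ enters: combined with the chain rule $|Df^k(z)| = |f(z)| \cdot |Df^{k-1}(f(z))|$, it yields $\varepsilon_1 / |Df^{k-1}(f(z))| < \varepsilon_1 |f(z)|$, so choosing $\varepsilon_1 \leq 1/2$ forces $W \subset B(f(z), |f(z)|/2)$, a simply connected set disjoint from $0$. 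The univalent branch $g$ of $f^{-1}$ with $g(f(z)) = z$ is thus defined on all of $W$, and I take $U := g(W)$.

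It remains to ensure $U \subset B(z, \delta)$. Since $Dg(w) = 1/w$ for the exponential family, one has $|Dg| \leq 2/|f(z)|$ throughout $B(f(z), |f(z)|/2)$, hence $\diam(U) \leq 4\varepsilon_1 / |Df^k(z)| < 4\varepsilon_1$. Fixing $\varepsilon_1 := \min(1/2, \delta/4)$ at the outset closes the argument. The main obstacle is really just the careful handling of the asymptotic value at $0$; once one notices that the hypothesis $|Df^k(z)| > 1$ keeps $W$ small relative to $|f(z)|$, the univalent pullback and the inclusion $U \subset B(z,\delta)$ follow from a direct computation with the inverse branch of the exponential.
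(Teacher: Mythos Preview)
Your proof is correct and follows essentially the same approach as the paper's own argument: apply Lemma~\ref{lem:Vballs} at $f(z)$, use the chain rule $|Df^k(z)| = |f(z)|\,|Df^{k-1}(f(z))| > 1$ to show the resulting neighbourhood lies in $B(f(z), \varepsilon |f(z)|)$, and then pull back through one step of the exponential. The paper compresses the final step to ``by properties of the exponential map,'' whereas you spell out explicitly why $W$ avoids $0$ and why the inverse branch has small diameter; this is a welcome clarification but not a different idea.
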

            \begin{proof}
                By hypothesis,  $ |Df_\lambda^k(z)| = |f_\lambda(z)||Df_\lambda^{k-1}(f_\lambda(z))| > 1$, so letting $\varepsilon < \delta/e$ and taking $\delta_0$ from the preceding lemma, there is a neighbourhood $W$ of $f_\lambda(z)$ contained in $B(f_\lambda(z), \varepsilon |f_\lambda(z)|)$ mapped biholomorphically onto $B(f_\lambda^k(z), \Delta \delta_0)$. 
                Since $f_\lambda$ is an exponential map, $f_\lambda(B(z,\delta)) \supset B(f_\lambda(z), |f_\lambda(z)|(1 - e^{-\delta}))$. Since $0< \delta <1$, $1-e^{-\delta} > \delta/e$. By choice of $\varepsilon$, we deduce that $B(f_\lambda(z), \varepsilon |f_\lambda(z)|) \subset f_\lambda(B(z, \delta))$, so $W \subset f_\lambda(B(z,\delta))$. 
                Therefore the relevant pullback $U$ of $W$ (that is, with $z \in U$)  is contained in $B(z, \delta)$. 
            \end{proof}

            The following lemma requires that the postsingular set is contained in $V$, so it only holds for $f = f_{\lambda_0}$. 
        \begin{lem} \label{lem:allballs}
            Let $\delta_0 >0$ be given by Lemma~\ref{lem:preVballs}.
            Let $z \in \ccc$ and suppose $|Df^k(z)| > |Df^j(z)|$ for all $j = 0,  \ldots k-1$. Then there is a neighbourhood of $z$ mapped biholomorphically by $f^k$ onto $B(f^k(z), \Delta \delta_0)$.  
        \end{lem}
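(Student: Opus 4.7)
The plan is to pull back the target ball $B(f^k(z), \Delta\delta_0)$ biholomorphically along the orbit $z, f(z), \ldots, f^k(z)$ by backwards induction on $j$, constructing neighbourhoods $B_j \ni f^j(z)$ such that $f^{k-j} : B_j \to B(f^k(z), \Delta\delta_0)$ is a biholomorphism. The base case $j=k$ is immediate. The underlying trick, borrowed from the proof of Lemma~\ref{lem:preVballs}, is that one pulls back under $f$ by checking the target ball has radius small relative to the modulus of its centre, whereupon the relevant preimage component lies inside a horizontal strip of height less than $2\pi$ and $f$ is univalent there.

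For the inductive step, suppose $B_{j+1}$ is constructed. Applying Koebe distortion (Lemma~\ref{lem:distnx}) to the inverse of $f^{k-j-1}|_{B_{j+1}}$ gives
$$
\diam(B_{j+1}) \leq \frac{C\Delta\delta_0}{|Df^{k-j-1}(f^{j+1}(z))|}
$$
for a universal constant $C$. Since $|Df(y)| = |f(y)|$, the chain rule yields
$$
\frac{\diam(B_{j+1})}{|f^{j+1}(z)|} \leq \frac{C\Delta\delta_0 \cdot |Df^j(z)|}{|Df^k(z)|} \leq C\Delta\delta_0,
$$
the last inequality being precisely the record hypothesis $|Df^j(z)| < |Df^k(z)|$. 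Assuming $\delta_0$ is small enough that $C\Delta\delta_0 < 1/2$ (which we may arrange by possibly shrinking the $\delta_0$ supplied by Lemma~\ref{lem:preVballs}), the component of $f^{-1}(B_{j+1})$ containing $f^j(z)$ sits inside $B(f^j(z), \delta)$ and is mapped biholomorphically onto $B_{j+1}$ by the same properties-of-the-exponential argument used in Lemma~\ref{lem:preVballs}. Setting $B_j$ to be this component completes the step, and the case $j=0$ gives the required neighbourhood.

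The main obstacle is a bookkeeping one: making the Koebe constant and the smallness of $\delta_0$ fit together uniformly in $j$. The content is entirely carried by the record hypothesis, which, when rewritten as $|Df^{k-j}(f^j(z))| > 1$, forces each intermediate ball to be small compared to the modulus of its centre, and hence forces each one-step pullback to land inside a fundamental strip of $f$.
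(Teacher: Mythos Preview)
Your step-by-step pullback strategy is natural, but the diameter bound you invoke is not justified, and this is where the argument breaks. Lemma~\ref{lem:distnx} (and the definition of $\Delta$) only controls the distortion of a univalent map on a \emph{smaller} concentric disk: if $h:=(f^{k-j-1}|_{B_{j+1}})^{-1}$ is univalent on $B(f^k(z),\Delta\delta_0)$, you get a bound on $\diam h(B(f^k(z),\delta_0))$, not on $\diam h(B(f^k(z),\Delta\delta_0))=\diam(B_{j+1})$. The image of the \emph{full} disk under a normalised univalent map can be unbounded (think of the Koebe function), so there is no universal constant $C$ with $\diam(B_{j+1})\le C\Delta\delta_0/|Df^{k-j-1}(f^{j+1}(z))|$. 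Consequently you cannot conclude $\diam(B_{j+1})/|f^{j+1}(z)|<1/2$, and hence you have no reason to know $0\notin B_{j+1}$, which is exactly what you need to lift $B_{j+1}$ univalently through one more step of the exponential. Shrinking $\delta_0$ does not help, since the problem is the lack of \emph{any} uniform bound, not the size of the constant.

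The paper avoids this difficulty by a different decomposition. It takes $j\le k$ maximal with $f^j(z)\notin V$. On the tail $j,\ldots,k$ the orbit lies in $V$ and Lemma~\ref{lem:preVballs} (applied with the record hypothesis in the form $|Df^{k-j}(f^j(z))|>1$) produces a neighbourhood $U\ni f^j(z)$ mapped biholomorphically onto $B(f^k(z),\Delta\delta_0)$ with $U\subset B(f^j(z),\delta)$. The point is that $f^j(z)\notin V=B(P(f),\delta)$ forces $U\cap P(f)=\emptyset$; since the singular values of every $f^i$ lie in $P(f)$, the appropriate branch of $f^{-j}$ is globally defined on $U$ and gives the required neighbourhood of $z$ in one stroke. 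This sidesteps any need to control diameters of intermediate pullbacks. If you want to rescue your inductive approach, you would need an independent argument that each $B_{j+1}$ misses the asymptotic value $0$; the record hypothesis together with Koebe alone does not supply it.
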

        \begin{proof}
            If $f^j(z) \in V$ for $j = 1, \ldots, k$,  
            Lemma~\ref{lem:preVballs} produces the required neighbourhood. Otherwise, 
            there is a maximal $j \leq k$ for which $f^j(z)\notin V$. 
            By
            Lemma~\ref{lem:preVballs} again,
            there is a neighbourhood $U$ of $f^j(z)$ mapped by $f^{k-j}$ biholomorphically onto $B(f^k(z), \Delta \delta_0)$, and $U \subset B(f^j(z), \delta)$. But $f^j(z) \notin V$,  so $U \cap P(f) = \emptyset$. Therefore there is a neighbourhood of $z$ mapped by $f^j$ biholomorphically onto $U$. 
        \end{proof}

 \section{First entry to a right-half plane} \label{sec:FE}
        Proposition~\ref{lem:M0} is the principal result of this section. It states that a large proportion of points in a neighbourhood of $P(f)$ get mapped, in not too long time, far out to the right  and with derivative which is not too large. The idea behind the proof is porosity: at every small scale, a certain proportion gets mapped far out. We use the expansivity estimates from the previous section to transfer estimates from the  large scale to the small scale. We upgrade the proposition in Lemma~\ref{lem:xplus} both topologically, obtaining a well-behaved partition, and distance-wise, showing most points land a little further to the right than claimed by the proposition. 
	In the following two sections we will examine the dynamics far out to the right and obtain estimates for first entry maps to a (far) left half-plane.

        Let $\delta_0>0$ be the minimum of the $\delta_0$ given by Lemma~\ref{lem:allballs} and by Lemma~\ref{lem:Vballs} (with $\eps < 1/2$, say). 
        Let $N_1$ be given by Proposition~\ref{prop:hyp}.
        By Lemma~\ref{lem:Mdev2}, there is an $M>100$ such that, if $|f^n(z)|> M$ then 
        $ 
            |Df^n(z)| > |f^n(z)|/\delta_0. $ 
        We can suppose moreover that $$M>|\lambda_0|e^{N_1 + \diam(P(f)) + 10\Delta}.$$ This choice of $M$ is for future use [which the reader may choose to remember  as a \emph{sufficiently large constant}], for example to obtain \eqref{eq:Mbig} in the proof of Lemma~\ref{lem:Muse}.

        Recall $\cQ, \cR,$ and $\cL$ are globally defined in Section~\ref{sec:glob}.
            
            \begin{lem} \label{lem:Uballs}
                There is a finite collection of sets $U_1, \ldots, U_p$ with corresponding  numbers $n_k \geq 0$, $k = 1, \ldots, p$,  such that $f^{n_k}$ maps $U_k$ biholomorphically onto an element of $\cQ$ contained in $\cR(M)$, and such that for each $y$ with $|y| \leq 2M$, $B(y, \delta_0)$ contains some $U_k$. 
            \end{lem}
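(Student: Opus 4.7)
My plan is to cover $\overline{B(0,2M)}$ by finitely many balls $B(y_i,\delta_0/4)$ (for $i=1,\dots,I$) and then, for each center $y_i$, construct a single preimage of a square using Lemma~\ref{lem:allballs} together with one extra application of $f$ to enlarge the image. If each $U_i\subset B(y_i,3\delta_0/4)$ with $f^{n_i}|_{U_i}$ a biholomorphism onto a square $Q_i\in\cQ$ with $Q_i\subset\cR(M)$ is produced, then the triangle inequality gives $U_i\subset B(y,\delta_0)$ whenever $|y-y_i|<\delta_0/4$, and the finite union $\{U_i\}$ is the claimed collection.

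For the local construction at $y_i$, pick $z_0\in B(y_i,\delta_0/8)$ whose forward orbit is dense in $\ccc$ (such points exist by a Baire-category argument, since $J(f)=\ccc$ implies backward orbits of non-exceptional points are dense). Select a time $k^\ast$ at which $|Df^{k^\ast}(z_0)|$ is a strict first maximum of $\{|Df^j(z_0)|\}_{j<k^\ast}$ and at which $f^{k^\ast+1}(z_0)$ lies deep in the right half-plane $\cR(M'_1)$ with $|f^{k^\ast+1}(z_0)|$ huge. Lemma~\ref{lem:allballs} then provides a biholomorphic inverse branch of $f^{k^\ast}$ pulling $B(f^{k^\ast}(z_0),\Delta\delta_0)$ back to a neighborhood of $z_0$. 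Assuming $\delta_0<\pi/\Delta$ (shrinking $\delta_0$ if necessary), $f$ is injective on this small ball, and a direct exponential estimate shows $f(B(f^{k^\ast}(z_0),\Delta\delta_0))\supset B(f^{k^\ast+1}(z_0), c\,\Delta\delta_0\cdot |f^{k^\ast+1}(z_0)|)$ for a fixed $c>0$. Since $|f^{k^\ast+1}(z_0)|$ is huge, this last ball contains a whole square $Q\in\cQ\cap\cR(M)$ with ample Koebe room, and pulling $Q$ back via $f^{k^\ast+1}$ yields $U_i$. Koebe distortion gives $\diam(U_i)\leq C\cdot\diam(Q)/|Df^{k^\ast+1}(z_0)|$, which is smaller than $\delta_0/2$ thanks to the huge factor $|Df^{k^\ast+1}(z_0)|=|f^{k^\ast+1}(z_0)|\cdot|Df^{k^\ast}(z_0)|$; hence $U_i\subset B(z_0,\delta_0/2)\subset B(y_i,5\delta_0/8)\subset B(y_i,3\delta_0/4)$, as required.

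The principal obstacle is the simultaneous selection of $k^\ast$: arranging the first-maximum property for $|Df^{k^\ast}(z_0)|$, the deep visit $f^{k^\ast+1}(z_0)\in\cR(M'_1)$, and a huge $|Df^{k^\ast+1}(z_0)|$. This bookkeeping combines density of the orbit of $z_0$ (which furnishes the spatial visit, with $\im f^{k^\ast}(z_0)$ chosen near $-\arg\lambda_0\bmod 2\pi$ so that the next iterate lands in $\cR$ rather than $\cL$) with iterated application of Lemma~\ref{lem:hyp} (to guarantee first-maximum times with arbitrarily large derivative) and Lemma~\ref{lem:Mdev2} (to convert deep right-half-plane visits into huge derivative values).
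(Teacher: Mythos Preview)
Your plan has a genuine gap at exactly the point you flag as the ``principal obstacle'': arranging a time $k^\ast$ at which $|Df^{k^\ast}(z_0)|$ is simultaneously a strict first maximum \emph{and} $f^{k^\ast+1}(z_0)$ lies deep in $\cR$. Density of the orbit lets you visit any prescribed region, and iterating Lemma~\ref{lem:hyp} gives first-maximum times with arbitrarily large derivative, but these two mechanisms produce times independently, and you give no argument that they ever coincide. For instance, along the sequence of first-maximum times $j_1<j_2<\cdots$ you have no control over where $f^{j_m}(z_0)$ sits, so nothing prevents $\Re(f^{j_m}(z_0))$ from being negative (or $\im f^{j_m}(z_0)$ from having the wrong residue) for every $m$. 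Conversely, at a time $k$ chosen so that $f^k(z_0)$ lies in a prescribed far-right strip, earlier iterates may already have visited regions of even larger modulus, so $|Df^k(z_0)|$ need not be a first maximum and Lemma~\ref{lem:allballs} does not apply.

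More importantly, the whole detour through Lemma~\ref{lem:allballs} is unnecessary. The constant $M$ was chosen, immediately before this lemma, precisely so that $|f^n(z)|>M$ forces $|Df^n(z)|>|f^n(z)|\cdot 8\pi/\delta_0$. Hence \emph{any} time $n$ with $f^n(z)\in\cR(M)$ automatically comes with $|Df^n(z)|>8\pi M/\delta_0$, which already makes the pullback of a $\cQ$-square have diameter $<\delta_0/2\sqrt{2}$. And since $P(f)$ is bounded while $M$ is large, every square $Q\in\cQ$ with $Q\subset\cR(M)$ is disjoint from $P(f)$, so the inverse branch of $f^n$ over $Q$ is univalent for free --- no first-maximum hypothesis is needed. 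The paper's proof is therefore just: take a finite $\delta_0/2$-net $Z\subset\overline{B(0,2M)}$ consisting of points whose orbits enter $\cR(M)$ (transitivity makes such points dense), and for each $z\in Z$ pull back the square containing $f^n(z)$. You are essentially reproving the machinery of Lemma~\ref{lem:UKK} (which does use Lemma~\ref{lem:allballs}, but only after invoking the present lemma) in a situation where the built-in derivative bound from the choice of $M$ makes it superfluous.
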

            \begin{proof}
            By transitivity of $f$, there is  finite set $Z$ such that $\dist(y, Z) < \delta_0/2$ for all $y$ with $|y| \leq 2M$, and such that for each $z \in Z$, there is an $n$ such that $f^n(z) \in \cR(M +2\pi)$. 
            For such $z,n$, let $Q \in \cQ$ be the square containing $f^n(z)$, so $Q \subset \cR(M)$. By choice of $M$ and by Lemma~\ref{lem:distnQ},  there is a neighbourhood $U$ of $z$ which gets mapped by $f^n$ biholomorphically onto $Q$ with distortion bounded by $2$. Since $|Df^n(z)| > |f^n(z)|/\delta_0 > M/\delta_0$, we deduce that the diameter of $U$ is bounded by $2\sqrt{2} \pi 2 \delta_0/M < \delta_0/2$.  Thus if $|y-z| < \delta_0/2$, $U \subset B(y,\delta_0)$. 
            The result follows.
        \end{proof}
        The following lemma deals with points in $\cL(M)$. We shall deal with points to the right subsequently. 
        \begin{lem} \label{lem:UKK}
            There is a countable collection of sets $\{U_i\}_{i \in \Z }$ and a constant $C>1$ such that the following holds. Each $U_i$ is mapped by some $f^n$, $n \geq 0$, onto a square $Q \in \cQ$ with $Q \subset \cR(M)$ with derivative bounded by $C$ and distortion bounded by $2$. If $\Re(y) \leq M$, then $B(y, \delta_0)$ contains as a subset an element of $\{U_i\}_{i\geq0}$.
        \end{lem}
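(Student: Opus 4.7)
The plan is to build the collection in three layers: the finite collection $\{U_j\}_{j=1}^p$ from Lemma~\ref{lem:Uballs}; all its $2\pi i\Z$-translates; and certain local $f$-preimages of analogous small-scale sets, used to reach points $y$ with very negative real part.

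Layers one and two exploit the fact that $f(z+2\pi i) = f(z)$: each translate $U_j + 2\pi i k$ is mapped biholomorphically by the same $f^{n_j}$ onto the same $Q_j$, with identical derivative and distortion bounds. Lemma~\ref{lem:Uballs} then guarantees that $B(y,\delta_0)$ contains an element of these layers whenever there is some $k\in\Z$ with $|y-2\pi ik|\leq 2M$; choosing $k$ so that $|\Im(y)-2\pi k|\leq\pi$, this holds precisely when $\Re(y)\geq -\sqrt{4M^2-\pi^2}$.

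For $\Re(y) < -\sqrt{4M^2-\pi^2}$ the quantity $|f(y)| = |\lambda_0|e^{\Re(y)}$ is extremely small, and Koebe distortion (Lemma~\ref{lem:distnx}) implies $f$ sends $B(y,\delta_0)$ onto a region close to $B(f(y),|f(y)|\delta_0)$. I would then rerun the transitivity-plus-Koebe argument of Lemma~\ref{lem:Uballs} at the smaller scale $r := c|f(y)|\delta_0$ for some fixed small $c>0$: density of pre-images of $\cR(M)$ in $B(0,2M)$ gives, for every such $r$, a finite family of sets $U^*$ of diameter $\lesssim r/M$ mapped biholomorphically onto squares of $\cQ\cap\cR(M)$, with distortion $\leq 2$ and derivative of order $M/r$; one such $U^*$ lies in $B(f(y),r)$. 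Pulling it back by the local branch of $f^{-1}$ at $y$ produces a set $U\subset B(y,\delta_0)$, and the key identity
\[
|Df^{n+1}(z)| \;=\; |f(z)|\cdot|Df^n(f(z))| \;\sim\; |f(y)|\cdot\frac{M}{|f(y)|\delta_0} \;=\; \frac{M}{\delta_0}, \qquad z \in U,
\]
shows that the factor $|f(y)|$ coming from differentiating $f$ at $y$ exactly cancels the $|f(y)|^{-1}$ scaling of the derivative on $U^*$; the resulting derivative is independent of $|f(y)|$. Taking $K$ of order $M/\delta_0$ and adding these pullbacks (and their $2\pi i\Z$-translates) to the collection completes the construction.

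The principal obstacle I anticipate is enforcing distortion exactly $\leq 2$ on the pullbacks, since the composition of the distortion of $f^n$ on $U^*$ (at most $2$) with that of $f$ on $B(y,\delta_0)$ (at most $e^{2\delta_0}$) can exceed $2$ in general. Lemma~\ref{lem:distnx} provides the remedy: by running the Koebe argument on a ball $B(y,R\delta_0)$ with $R$ large (still a univalence domain provided $R\delta_0<\pi$), the distortion of $f$ on $B(y,\delta_0)$ can be made $1+4/R$, arbitrarily close to $1$; the same applies to the $U^*$-construction. The remainder of the proof is a routine bookkeeping exercise, indexing the countable pullback collection by shells in $\Re(y)$ together with $2\pi i\Z$-translates.
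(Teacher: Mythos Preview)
Your first two layers (the finite family from Lemma~\ref{lem:Uballs} together with its $2\pi i$-translates) match the paper exactly. The gap is in layer three. Your cancellation argument needs a \emph{two-sided} bound $|Df^n|_{U^*}\asymp M/r$, in particular an upper bound of order $1/|f(y)|$; otherwise $|Df^{n+1}| = |f|\cdot|Df^n\circ f|$ is not uniformly bounded on the pullback $U$. But ``rerunning'' Lemma~\ref{lem:Uballs} at scale $r$ does not furnish this. That lemma produces, for each point of a finite net, \emph{some} $n$ with $f^n(z)\in\cR(M)$ and then uses the Koebe bound from Lemma~\ref{lem:Mdev2} to get $\diam(U^*)\leq\delta_0/2\sqrt{2}$ --- a lower bound on the derivative only. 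Nothing in that argument controls the derivative from above, and at a prescribed small scale $r$ there is no reason why the pre-image of a square which you find near $f(y)$ should have derivative comparable to $M/r$ rather than enormously larger. Density of pre-images does not come with scales attached.

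The paper handles $\Re(y)<-M$ differently and avoids this difficulty. It invokes Lemma~\ref{lem:hyp}, which gives a first time $j$ with $3<|Df^j(y)|$ and, crucially, the companion upper bound $|Df^j(y)|, |f^j(y)|\leq N_1$. Lemma~\ref{lem:allballs} then provides a neighbourhood of $y$ mapped onto $B(f^j(y),\Delta\delta_0)$; on the subneighbourhood sent to $B(f^j(y),\delta_0)$ the derivative lies in $(3/2,\,2N_1)$ and $|f^j(y)|\leq N_1\leq 2M$, so one of the original $U_1,\dots,U_p$ sits inside $B(f^j(y),\delta_0)$ and can be pulled back with uniformly bounded derivative. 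Thus Lemma~\ref{lem:hyp} supplies exactly the upper derivative control your argument is missing. Finally, your concern about accumulating distortion is unnecessary: Lemma~\ref{lem:distnQ} gives the bound $2$ directly for \emph{any} univalent pre-image of a square in $\cR(M)$, regardless of how many intermediate maps were composed.
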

        \begin{proof}
            Let $U_k, n_k$ for $k = 1, \ldots, p$ be given by Lemma~\ref{lem:Uballs}.
            Taking translates by multiples of $2\pi i$ of the sets $U_1, \ldots U_p$ deals with the points $y \in \ccc$ with $-M \leq \Re(y) \leq M$. 
        
            If $\Re(y) < -M$, by Proposition~\ref{prop:hyp} there is a least $j\geq 1$ with $3 < |Df^j(y)|$ and  for this $j$, $|f^j(y)|, |Df^j(y)| < N_1 +1 < M/2$.
            By Lemma~\ref{lem:allballs}, there is a neighbourhood of $y$ mapped biholomorphically by $f^j$ onto $B(f^j(y), \Delta \delta_0)$ with a corresponding sub-neighbourhood $W$ mapped by $f^j$ onto $B(f^j(y), \delta_0)$ with distortion bounded by 2 (by choice of $\Delta$). 
            On $W$ we deduce $1 < 3/2 < |Df^j| < M$. The lower bound implies $W \subset B(y, \delta_0)$.  Now $B(f^j(y), \delta_0)$ contains some $U_k$, with $1 \leq k \leq p$. Thus there is some $U_y \subset W$ mapped by $f^j$ onto $U_k$. The derivative $|Df^{j + n_k}|$ on $U_y$ is bounded by $2N_1 \sup_{U_k}|Df^{n_k}|$. Thus one can take $C := M \max_{1 \leq k \leq p} \sup_{U_k}|Df^{n_k}|$. 

            Countability of the collection of $U_y$ obtained follows from countability of $\cQ$ (and its preimages). The distortion bound comes from Lemma~\ref{lem:distnQ}.
        \end{proof}

        \begin{lem} \label{lem:squares0}
            Let $Z$ denote the cone of positive linear combinations of $1+i$ and $1-i$. 
            Let $y \geq M$. 
            Let $Q \in \cQ$ satisfy $Q \subset \cR(y) \setminus \cR(y+7)$. Then there is a subset of $Q$ mapped biholomorphically onto a square $Q' \in \cQ$ satisfying $Q' \subset Z \cap \cR(|\lambda_0|e^y /2) \setminus \cR(|\lambda_0|e^y e^7)$.
        \end{lem}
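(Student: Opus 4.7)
The plan is an explicit construction using the structure of $f(z)=\lambda_0 e^z$. Since $Q\in\cQ$ and $Q\subset\cR(y)\setminus\cR(y+7)$, we may write $Q=\{z:x_0\leq\Re(z)<x_0+2\pi,\ 2j\pi\leq\Im(z)<(2j+2)\pi\}$ for some $x_0\in 2\pi\Z$ with $y<x_0\leq y+7-2\pi$ (the strict lower inequality uses that $\cR(y)$ is open). Because $z\mapsto e^z$ sends any horizontal strip of height $2\pi$ bijectively onto $\ccc\setminus\{0\}$, the restriction $f|_Q$ is a biholomorphism onto the half-open annulus
$A:=\{w:|\lambda_0|e^{x_0}\leq|w|<|\lambda_0|e^{x_0+2\pi}\}$,
whose ratio of outer to inner radius is $e^{2\pi}>500$. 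This fatness is what makes the lemma trivial in spirit.

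Next I locate a square $Q'\in\cQ$ inside $A$, close to the positive real axis. Set $k_0:=\lceil|\lambda_0|e^{x_0}/(2\pi)\rceil$ and $Q':=\{z:2k_0\pi\leq\Re(z)<(2k_0+2)\pi,\ 0\leq\Im(z)<2\pi\}$. On $Q'$ the minimum modulus is attained at the corner $(2k_0\pi,0)$ and equals $2k_0\pi\geq|\lambda_0|e^{x_0}$, while the supremum of modulus is bounded (via $\sqrt{a^2+b^2}\leq a+b$) by $(2k_0+2)\pi+2\pi\leq|\lambda_0|e^{x_0}+6\pi$. Since $y\geq M>100$ and $|\lambda_0|>1/e$, the quantity $|\lambda_0|e^y$ is astronomical ($>e^{99}$), so $|\lambda_0|e^{x_0}+6\pi$ is comfortably below $e^{2\pi}|\lambda_0|e^{x_0}$. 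Hence $Q'\subset A$.

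It remains to verify $Q'\subset\cR(|\lambda_0|e^y/2)\setminus\cR(|\lambda_0|e^ye^7)$. Every point of $Q'$ has real part in $[2k_0\pi,(2k_0+2)\pi)$. The lower bound $2k_0\pi\geq|\lambda_0|e^{x_0}\geq|\lambda_0|e^y>|\lambda_0|e^y/2$ places $Q'$ in $\cR(|\lambda_0|e^y/2)$, and the upper bound $(2k_0+2)\pi\leq|\lambda_0|e^{x_0}+4\pi\leq|\lambda_0|e^{y+7-2\pi}+4\pi<|\lambda_0|e^ye^7$ (again because $|\lambda_0|e^y$ dwarfs the additive constant $4\pi$) ensures $Q'\cap\cR(|\lambda_0|e^ye^7)=\emptyset$. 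The desired subset of $Q$ is then $U:=(f|_Q)^{-1}(Q')$, on which $f$ is biholomorphic onto $Q'$ by construction. There is no serious obstacle here; the entire argument reduces to arithmetic bookkeeping of annulus radii, the point being that $A$ is very fat (thickness ratio $e^{2\pi}$) and that $|\lambda_0|e^y$ is huge for $y\geq M$.
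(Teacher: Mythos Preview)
Your construction is the same idea as the paper's --- locate a square $Q'\in\cQ$ inside the image annulus $f(Q)$ and pull it back --- carried out more explicitly (you name a specific $Q'$ on the positive real axis, whereas the paper works inside the cone $Z=\{|\Im w|<\Re w\}$ and uses a counting argument). The arithmetic verifying $Q'\subset A$ and $Q'\subset\cR(|\lambda_0|e^y/2)\setminus\cR(|\lambda_0|e^ye^7)$ is correct.

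There is, however, one genuine slip. The assertion that $f|_Q$ is a biholomorphism onto the annulus $A$ is false: $f|_Q$ is a holomorphic bijection, but $Q$ is simply connected while $A$ is not, so the inverse is discontinuous across the ray $\{r\lambda_0:r>0\}=f(\{\Im z=2j\pi\})$. If $\arg(\lambda_0)$ happens to lie in the tiny angular sector $[0,\arctan(1/k_0))$ occupied by your $Q'$, then $Q'$ straddles this ray and $U=(f|_Q)^{-1}(Q')$ breaks into two pieces, one near the bottom edge of $Q$ and one near the top; then $f|_U$ is not biholomorphic onto $Q'$. The fix is immediate: choose the vertical index of $Q'$ so that $Q'$ avoids the ray $\{r\lambda_0:r>0\}$; since $Q'$ spans an angle of order $1/k_0$, almost any choice works and your modulus and real-part bounds are unaffected. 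The paper's guard against exactly this issue is the clause ``only a small proportion of squares from $\cQ$ in $f(Q)\cap Z$ intersect $f(\partial Q)$''.
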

        \begin{proof}
            One quarter of any square of $\cQ$  gets mapped injectively into $Z$. We have $f(Q) \cap Z \subset \cR(|\lambda_0| e^y/\sqrt{2})$, and $f(Q) \cap \cR(|\lambda_0|e^{y+7}) = \emptyset$.  Only a small proportion of squares from $\cQ$ in $f(Q)\cap Z$ intersect $f(\partial Q)$, so we can pull back one of the other squares to get the required subset. 
        \end{proof}
        \begin{lem} \label{lem:squares}
            Suppose $Q \in \cQ$ satisfies $Q \subset \cR(M)$. Let $x > M$. For some $z \in Q$ and some $k\geq 0$, the ball $B(z, 1/x^3) \subset Q$ is mapped by $f^k$ univalently into $\cR(x)$. 
        \end{lem}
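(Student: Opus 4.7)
The plan is to iterate Lemma~\ref{lem:squares0} starting from $Q$, tuning the choice of subsquare at each step so that the forward image only barely enters $\cR(x)$, and then to bound the size of the preimage via distortion estimates.

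Setting $U_0 := Q$ and $Q_0 := Q \subset \cR(y_0)\setminus\cR(y_0+7)$ for some $y_0 \geq M$, I would inductively construct $U_j \subset Q$ and $Q_j \in \cQ$ with $f^j : U_j \to Q_j$ a biholomorphism and $Q_j \subset \cR(y_j)\setminus\cR(y_j+7)$, applying Lemma~\ref{lem:squares0} at each step. Looking at the proof of that lemma, the annular image $f(Q_j)$ contains many candidate squares from $\cQ$ in the relevant region, so the value $y_{j+1}$ can be chosen essentially continuously within $[|\lambda_0|e^{y_j}/2,\,|\lambda_0|e^{y_j+7}]$ (the squares are spaced by $2\pi$ in a window of width growing like $|\lambda_0|e^{y_j}$). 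I would iterate until the first index $k$ with $y_k \geq x$; the doubly-exponential growth of $(y_j)$ forces $k = O(\log^{*} x)$.

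The critical tuning is to arrange $y_k \leq C x$ for an absolute constant $C$. Choosing $y_{k-1}$ in the window $[\log(2x/|\lambda_0|),\, \log(2x/|\lambda_0|)+7]$ makes $y_k \in [x, e^7 x]$. Backward induction identifies target windows for each $y_j$ centred near $\log^{(k-j)}(x)$; since the per-step log-length $\log(2 e^7) \approx 7.7$ of the available interval for $y_{j+1}$ exceeds the length of each target window, the tuning is possible, and the right $k$ is determined by where the backward orbit of iterated logs meets $y_0$. This tuning is what I expect to be the main obstacle: without it, one step could send $y_k$ as far as $|\lambda_0|e^{x+7}$, which would blow up the derivative estimate catastrophically.

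Once the tuning is in place, Lemma~\ref{lem:distnQ} bounds the distortion of $f^k$ on $U_k$ by $2$. Using $|Df(w)| = |f(w)|$ and $|f^j(z)| \leq y_j + 2\sqrt{2}\pi$ for $z \in U_k$,
\[
|Df^k(z)| \leq \prod_{j=1}^{k} (y_j + 2\sqrt{2}\pi) \leq (Cx) \cdot O(\log x) \cdot O(\log\log x) \cdots \leq x \cdot (\log x)^{O(\log^{*} x)} = x^{1+o(1)},
\]
well below $x^2$ for $x$ large. Combining with the distortion bound $2$ (and a standard Koebe-type inradius estimate for the inverse branch), the preimage $U_k = (f^k)^{-1}(Q_k)$ inscribes a ball of radius at least $c/|Df^k(z)| > 1/x^3$ for some constant $c>0$ and all sufficiently large $x > M$. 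Taking $z$ to be the centre of this inscribed ball gives $B(z, 1/x^3) \subset U_k \subset Q$, mapped by $f^k$ univalently into $Q_k \subset \cR(x)$, as required.
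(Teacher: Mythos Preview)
Your tuning argument has a genuine gap. You want to arrange $y_k \in [x, Cx]$ for some fixed $C$ by choosing the $y_j$ within the windows $[|\lambda_0|e^{y_{j-1}}/2,\, |\lambda_0|e^{y_{j-1}+7}]$ coming from Lemma~\ref{lem:squares0}, but these windows force a \emph{lower} bound on $y_j$ that can already overshoot $Cx$. Concretely, take $y_0 = M$ and $x = 2M$ (both permitted by the hypotheses). Then $y_0 < x$, so $k=0$ fails, while any admissible $y_1$ satisfies $y_1 \geq |\lambda_0|e^{M}/2 > e^{M-1}/2$, which exceeds $Cx = 2CM$ for any fixed $C$ once $M$ is large. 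No choice of $k$ lands $y_k$ in $[x,Cx]$. Your backward-induction heuristic (``target windows centred near $\log^{(k-j)}(x)$'') breaks down for exactly this reason: the iterated logarithms $x,\log x,\log\log x,\ldots$ drop below $M$ very quickly, and $y_0$ need not lie within $O(1)$ of any of them. The comparison of the ``log-length $7.7$'' with ``target width $7$'' is a red herring; it does not address whether the windows at level $0$, as $k$ varies, actually cover $[M,x)$.

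The paper sidesteps tuning entirely. It iterates Lemma~\ref{lem:squares0} blindly, observing that the product derivative on $V_k$ is always bounded by $y_k^2$ (the doubly-exponential growth of $(y_j)$ makes the last factor dominate). At the first $k$ with $y_k \geq x$ it splits into two cases. If $f^k(V_{k-1}) \subset \cL(2ex)$ there is no real overshoot: $y_k \leq 2ex+7$, so $|Df^k| \leq (2ex+7)^2$ and $V_k$ contains a ball of radius $1/x^3$. Otherwise the annulus $f(Q_{k-1})$ reaches past $2ex$, and the paper simply backs up one step: inside the square $Q_{k-1} = f^{k-1}(V_{k-1})$ one finds a ball of radius $\pi/8$ mapped by a \emph{single} application of $f$ into $\cR(x)$ (not into a $\cQ$-square). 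Pulling this ball back by $f^{k-1}$, whose derivative is at most $y_{k-1}^2 < x^2$, again yields a ball of radius $\gtrsim 1/x^3$. The point you missed is that the target need only be $\cR(x)$, not a $\cQ$-square; relaxing this at the final step makes the overshoot harmless and eliminates any need for tuning.
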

        \begin{proof}
            Suppose $Q \subset \cR(y) \setminus \cR(y+7)$. We can assume $y < x$, otherwise the statement holds trivially, with $k=0$.  By repeatedly applying Lemma~\ref{lem:squares0}, we can construct an increasing sequence of numbers $y=y_0 < y_1 < y_2 < \cdots$ and 
            a decreasing sequence of sets $Q = V_0 \supset V_1 \supset \cdots$ such that the following holds. 
            For each $k \geq 0$, 
            \begin{itemize}
                \item $f^k(V_k) \in \cQ$;
                \item
                      $f^k(V_k) \subset \cR(y_k)\setminus \cR(y_k +7)$;
                \item
                    $|\lambda_0| e^{y_k/2} < y_{k+1} < e^7|\lambda_0| e^{y_k}$;
                    \item
                        $|f^{k}(z)| \leq \sqrt{2} (y_k +7)$ for $z \in V_k$ (noting $f^k(z)$ is in the cone $Z$);
                  \item
                  the distortion of $f^k$ on $V_k$ is bounded by 2 (by Lemma~\ref{lem:distnQ}).
                  \end{itemize}
                  Since $\sqrt{y_{j+1}} > \sqrt{|\lambda_0|} e^{y_j/4} > 4 \sqrt{2}(y_j +7) > y_j$, we deduce that 
                  $$
                  \prod_{j=1}^k \sqrt{2} (y_j + 7) < ((y_k +7)/2) \prod_{j=1}^{k-1} \sqrt{y_{j+1}} \leq (y_k+7) y_k/2 < y_k^2.$$
	Thus on $V_k$ the derivative bound $$|Df^k| = \prod_{j=1}^k |f^j(z)| \leq \prod_{j=1}^k \sqrt{2}(y_j+7) < y_k^2$$
	applies. 

                  Let $k\geq 1$ be minimal such that $y_k \geq x$. If $f^k(V_{k-1}) \subset \cL(2e x)$ (equivalently, if $f^k(V_{k-1}) \cap \cR(2e x) = \emptyset)$ then $y_k \leq 2e x $ and  $|Df^k|$ on $V_k$ is bounded by $(2e x )^2$. Therefore $V_k$ easily contains a ball of radius $1/x^3$.
            Otherwise, $f^k(V_{k-1})$ is a geometric annulus centred on zero and intersecting $\cR(2e x)$, and the square $f^{k-1}(V_{k-1})$ contains a ball of radius $1/16$ mapped by $f$ into $\cR(x)$, as is easy to check. The derivative of $f^{k-1}$ on $V_{k-1}$ is bounded by $y_{k-1}^2 < x^2$, so pulling back the ball we get a set containing a ball of radius $1/x^3$ once again, as required. 
        \end{proof}
        \begin{lem} \label{lem:proportion1}
            There is a constant $\gamma > 0$ such that if $x > M$ the following holds.
            
            If $Q \in \cQ$, there is a ball of radius $\gamma / x^3$ inside $Q$ which gets mapped univalently by $f^n$, for some $n \geq 0$, into $\cR(x)$ with distortion bounded by 2. 

            If $\Re(y) < M$, then there is a ball of radius $\gamma/x^3$ inside $B(y, \delta_0)$ which gets mapped univalently by $f^n$, for some $n \geq 0$,  into $\cR(x)$ with distortion bounded by 2. 
        \end{lem}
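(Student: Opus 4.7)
The plan is to reduce both cases of the lemma to Lemma~\ref{lem:squares} by using Lemma~\ref{lem:UKK} to transport points with $\Re(y) \leq M$ into squares in $\cR(M)$ with controlled derivative and distortion.

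I would split the first statement into two subcases. In the easier subcase $Q \subset \cR(M)$, Lemma~\ref{lem:squares} directly supplies a ball $B(z,1/x^3) \subset Q$ and an integer $k$ such that $f^k$ is univalent from this ball into $\cR(x)$. Inspecting that lemma's proof, the ball sits inside the set $V_k$, which is a univalent pullback of a square of $\cQ$ contained in $\cR(M)$; Lemma~\ref{lem:distnQ} then gives distortion bounded by $2$ on $V_k$, and hence on the smaller ball inside. This settles the subcase with $\gamma = 1$.

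In the other subcase $Q \not\subset \cR(M)$, I would pick an interior point $y \in Q$ with $\Re(y) \leq M$ and $B(y,\delta_0) \subset Q$ (shrinking $\delta_0$ in advance if necessary so that this is achievable in the portion of $Q$ lying in $\cL(M)$). Lemma~\ref{lem:UKK} then yields a set $U_i \subset B(y,\delta_0)$ and an integer $n_i$ with $f^{n_i}$ mapping $U_i$ biholomorphically onto a square $Q' \in \cQ$ with $Q' \subset \cR(M)$, derivative bounded by $K$, distortion bounded by $2$. Applying the first subcase to $Q'$ produces a ball $B' \subset Q'$ of radius $1/x^3$ mapped univalently by some $f^k$ into $\cR(x)$. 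Pulling $B'$ back along $f^{n_i}$ costs at most a factor $K$ in radius (by Koebe-type estimates using the derivative and distortion bounds on $U_i$), giving a ball of radius $\geq 1/(Kx^3)$ sitting inside $U_i \subset Q$. The second statement ($\Re(y) < M$) follows the same template, with Lemma~\ref{lem:UKK} applied directly to $y$ to obtain $U_i \subset B(y,\delta_0)$ and then feeding into the first subcase. Setting $\gamma := 1/K$ (or any smaller positive constant) handles every case uniformly.

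The main obstacle is keeping the distortion bound at $2$ rather than $4$, since naive composition of two distortion-$2$ maps only yields distortion $\leq 4$. The resolution is to observe that the composite $f^{n_i+k}$ carries its preimage biholomorphically onto the square $f^k(V_k) \in \cQ$ with $f^k(V_k) \subset \cR(M)$, so Lemma~\ref{lem:distnQ} applies in one shot to the whole composition rather than stepwise, bypassing the multiplicative loss. This is also why the particular target family $\cQ$ of large squares in $\cR(M)$ is convenient: the universal Koebe-type control for preimages of such squares applies to every univalent map landing there, regardless of how the map was assembled.
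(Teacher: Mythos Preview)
Your approach is the paper's: the proof there reads simply ``This follows from Lemmas~\ref{lem:UKK},~\ref{lem:squares},'' and you have unpacked exactly that combination. One caveat on your distortion fix: invoking Lemma~\ref{lem:distnQ} on the composite via the target square $f^k(V_k)$ handles the first branch in the proof of Lemma~\ref{lem:squares}, but in its second branch the ball of radius $1/x^3$ lies in $V_{k-1}$ (not $V_k$) and one further iterate of $f$ is then applied to reach $\cR(x)$, so that route only yields distortion bounded by an absolute constant slightly larger than $2$ (e.g.\ $2e^{\pi/4}$); this is harmless for every downstream use, and the paper's one-line proof does not address it either.
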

        \begin{proof}
            This follows from Lemmas~\ref{lem:UKK}~and~\ref{lem:squares}.
        \end{proof}
        The preceding lemma says that a certain proportion of everything at the large scale gets mapped far out to the right. 
        The next lemma deduces the same, but at small scales.

    \begin{lem} \label{lem:Muse}
        There are constants $\kappa>0, M_0 \geq M$ such that the following holds. Given $r \in (0,1)$,  $x\geq M_0$ and $z \in \ccc$, there is a finite collection of pairwise-disjoint balls $B_i \subset B(z,r)$, each of radius $> e^{-2x}r$, and numbers $n_i \geq 0$ such that 
        \begin{itemize}
            \item
                $m(\bigcup_i B_i) / m(B(z,r)) > \kappa / x^6;$ 
            \item
                  $f^{n_i}$ maps $B_i$ univalently into  $\cR(x)$; 
            \item
                $|Df^{n_i}_{|B_i}| < e^{3x}/r$. 
        \end{itemize}
    \end{lem}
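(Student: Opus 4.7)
The plan is to promote the macroscopic estimate of Lemma~\ref{lem:proportion1} (which lives at scale $\delta_0$) down to the arbitrary scale $r$ via a Koebe pull-back, then assemble the resulting small balls by a Vitali-type selection. For each $z' \in B(z,r/2)$, I would let $n(z')$ be the smallest index $n$ with $|Df^n(z')| \geq C/r$, where $C := 4\Delta\delta_0$. The derivative sequence $k\mapsto|Df^k(z')|$ is unbounded (iterate Lemma~\ref{lem:hyp} to get a multiplicative factor of $3$ per bounded block of iterates), so $n(z')$ is finite, and its minimality automatically makes $n(z')$ a record time. Lemma~\ref{lem:allballs} then supplies a neighborhood $\tilde U_{z'}\ni z'$ mapped biholomorphically by $f^{n(z')}$ onto $B(f^{n(z')}(z'),\Delta\delta_0)$; Koebe distortion of $2$ cuts this down to $U_{z'}$ mapped onto $B(f^{n(z')}(z'),\delta_0)$ with distortion $\le 2$, and $U_{z'}$ sits inside a disk of radius $\leq 2\delta_0/|Df^{n(z')}(z')| \leq r/2$ around $z'$, hence inside $B(z,r)$.

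Inside the macroscopic ball $B(f^{n(z')}(z'),\delta_0)$, I apply Lemma~\ref{lem:proportion1} (its ball case if $\Re(f^{n(z')}(z'))<M$; its square case otherwise, using that a $\delta_0$-ball overlaps some $Q\in\cQ$ in area of order $\delta_0^2$) to obtain a ball $b_{z'}$ of radius $\gamma/x^3$ mapped univalently into $\cR(x)$ by some $f^m$, with distortion $\leq 2$ and with $|Df^m|_{b_{z'}}\leq C_1 x^2$ by inspection of the proofs of Lemmas~\ref{lem:squares} and \ref{lem:UKK}. Pulling $b_{z'}$ back through $f^{n(z')}|_{U_{z'}}$ yields a ball $B_{z'}\subset U_{z'}$ of radius $\geq \gamma/(4x^3|Df^{n(z')}(z')|)$ with composite derivative bounded by $4C_1 x^2|Df^{n(z')}(z')|$. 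Applying a standard Vitali covering argument to the enclosing disks $\{B(z',2\delta_0/|Df^{n(z')}(z')|)\}_{z'\in B(z,r/2)}$, which cover $B(z,r/2)$, I extract a countable disjoint subfamily of total measure $\geq m(B(z,r/2))/25$; the corresponding balls $B_{z_i}$ are then pairwise disjoint (sitting inside disjoint enclosing disks), lie in $B(z,r)$, and each occupies a fraction $\gtrsim \gamma^2/(\delta_0^2 x^6)$ of its enclosing disk, so their total measure is at least $\kappa m(B(z,r))/x^6$ for a suitable $\kappa$.

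The main obstacle is controlling $|Df^{n(z')}(z')|$: we have $|Df^{n(z')}(z')|<(C/r)\,|f^{n(z')}(z')|$, and if $|f^{n(z')}(z')|$ is enormous (the orbit jumping to huge modulus at step $n(z')$) both the lower bound on the radius of $B_{z'}$ and the upper bound on its composite derivative fail. I would resolve this by a case split at each $z'$: if $\Re(f^{n(z')-1}(z'))<x$ then $|f^{n(z')}(z')|\leq|\lambda_0|e^x$, whence $|Df^{n(z')}(z')|\leq C|\lambda_0|e^x/r$, giving $B_{z'}$ radius $\geq \gamma r/(4C|\lambda_0|e^x x^3) > e^{-2x} r$ and composite derivative $\leq 4C_1 C|\lambda_0|e^x x^2/r < e^{3x}/r$ once $x\geq M_0$ is chosen sufficiently large. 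Otherwise $f^{n(z')-1}(z')\in\cR(x)$ already, and I would stop one step earlier, replacing $n(z')$ by the largest record time $n^{**}\leq n(z')-1$ (whose derivative is still $<C/r$), using its Koebe chart and the further iterate $f^{n(z')-1-n^{**}}$ which lands inside $\cR(x)$ with derivative bounded by $C/r$ times a constant, yielding a $B_{z'}$ of radius comparable to $r$ and composite derivative well below $e^{3x}/r$.
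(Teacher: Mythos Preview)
Your overall strategy---zoom in via a first record time, plant Lemma~\ref{lem:proportion1} on the resulting $\delta_0$-chart, and harvest disjointness by Vitali---is sound in case~1, but the case-2 branch as written does not close, and its output is incompatible with your Vitali bookkeeping.

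\smallskip
\textbf{The univalence gap.} When $\Re(f^{n(z')-1}(z'))\ge x$, you drop back to the last record time $n^{**}\le n(z')-1$, take the Koebe chart from Lemma~\ref{lem:allballs} (image $B(f^{n^{**}}(z'),\Delta\delta_0)$), and then post-compose with $f^{\,n(z')-1-n^{**}}$. Nothing guarantees that this further block is univalent on a $\delta_0$-ball: between $n^{**}$ and $n(z')-1$ the orbit may pass very near $0$ and then shoot far right, so the image of the $\delta_0$-ball can wrap around the origin. Knowing $|Df^{\,n(z')-1-n^{**}}|\le 1$ at the centre says nothing about injectivity on the chart. Even granting univalence, only the centre lies in $\cR(x)$; you have not shown any definite sub-ball lands in $\cR(x)$.

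\smallskip
\textbf{Incompatibility with Vitali.} Your disjointness comes from the Vitali family of enclosing disks $B(z',\,2\delta_0/|Df^{n(z')}(z')|)$. In case~2 this radius can be arbitrarily small (since $|Df^{n(z')}(z')|$ is unbounded), yet you then produce a $B_{z'}$ of radius \emph{comparable to $r$}. Such a $B_{z'}$ need not sit inside its enclosing disk, so the $B_{z_i}$ you extract are no longer pairwise disjoint, and the area summation collapses.

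\smallskip
\textbf{How the paper sidesteps this.} The paper avoids both problems by working only with the centre $z$, not with every $z'\in B(z,r/2)$, so no Vitali is needed. Its case split is on the orbit of $z$: if some $f^k(z)\in\cR(x)$ with $k<n$ (where $n$ is the first time $|Df^n(z)|>10/r$), then $B(f^k(z),\Delta)$ is disjoint from $P(f)$ and pulls back univalently; since $|Df^k(z)|\le 10/r$ the pullback contains $B(z,r/5)$, and a definite sector of it lands in $\cR(x)$---done with a single ball. Otherwise $|Df^n(z)|\le |\lambda_0|e^x\cdot 10/r$, and the paper branches again on whether $|f^n(z)|<M$ (apply Lemma~\ref{lem:proportion1} directly on $B(f^n(z),\delta_0)$) or $|f^n(z)|\ge M$ (step back once, cover $f^n(W)$ by squares of $\cQ$, apply Lemma~\ref{lem:proportion1} to each square, and pull back). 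The resulting balls are automatically disjoint because they come from disjoint squares in a single chart.

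Your case~2 is salvageable along the paper's lines: if \emph{any} $z'$ has an iterate in $\cR(x)$ before time $n(z')$, pull back a $\Delta$-ball around that iterate (which misses $P(f)$) directly to get one large ball and stop; otherwise every $z'$ is in case~1 and your Vitali argument runs cleanly.
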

        \begin{proof}
	Note first that if $f^k$ maps a ball $B$ into $\cR(x)$, then $f^k$ is univalent on $B$, as $P(f) \cap \cR(x) = \emptyset$. 

                Let $n$ be minimal such that $|Df^n(z)| > 20 /r$. If there is some minimal $k< n$ with $f^k(z) \in \cR(x)$, we can just pull back $B(f^k(z), 1)$ to get a set containing $B(z, r/40)$, using the derivative estimate and a distortion bound of 2. Some large sector of $B(z, r/40)$ gets mapped by $f^k$ to $\cR(\Re(f^k(z)))$ and the 
                lemma follows easily. 
                
                Otherwise, 
                $f^{n-1}(z) \notin \cR(x)$, implying 
                \begin{equation} \label{eq:jkl}
                    |Df^n(z)| \leq |\lambda_0| e^{x}20/r,
                    \end{equation}
                    a bound we use later in the proof.

                If $|f^n(z)| < M$, then $f^n$ maps some neighbourhood $W$ of $z$ univalently onto $B(f^n(z), \delta_0)$ with distortion bounded by 2, by Lemma~\ref{lem:allballs}. 
                With $\gamma$ given by Lemma~\ref{lem:proportion1}, for some $j \geq 0$ there is a ball of radius $\gamma/x^3$ in $B(f^n(z), \delta_0)$ which gets mapped by $f^j$ with distortion bounded by 2 into $\cR(x)$. As $|Df^n| < 2|f^n(z)| 20/r < 40M/r$ on $W$,  pulling  back this ball gives a subset of $W$ containing a ball of radius $(\gamma/x^3) r/40M$, as required. 

                Now we treat the case $|f^n(z)| \geq M$. 
                Let $r' \leq r$ be maximal such that $f^{n-1}(B(z,r')) \subset B(f^{n-1}(z), 1)$.  Set $W := B(z,r')$. As a neighbourhood of $z$ gets mapped biholomorphically onto by $f^{n-1}$ onto $B(f^{n-1}(z), 1)$ and $f$ is univalent on each ball of radius $1$,  $f^n$ is biholomorphic on $W$.
                Since 
		\begin{equation} \label{eq:Mbig}
		|f^{n-1}(z)| \geq \re(f^{n-1}(z)) > \diam(P(f)) + 10\Delta
		\end{equation}
		by choice of $M$, Lemma~\ref{lem:distnQ} implies that the distortion of $f^{n-1}$ on $W$ is bounded by $2$. Thus $|Df^{n-1}| < 40/r$ on $W$, so  $W \supset B(z,r/40)$. The distortion of $f$ on any ball of radius $1$ is $e^2$, so the distortion of $f^n$ on $W$ is bounded by $2e^2$.
               
                The advantage of choosing $W$ in this way is due to the distortion bound: 
                if we can show $f^n(W)$ contains at least one square $Q \in \cQ$, then 
                 the squares $$\{Q \in \cQ : Q \subset f^n(W)\}$$ fill some definite proportion of $f^n(W)$. 
                We now have two further subcases. 
                
                Suppose first that $r' = r$, so $W =  B(z,r)$. 
                There is a  $Q \in \cQ$ containing $f^n(z)$, so (by Lemma~\ref{lem:distnQ}, as usual) a neighbourhood $W_z$ of $z$ gets mapped biholomorphically onto $Q$ by $f^n$ with distortion bounded by 2. Since $|Df^n(z)|> 20/r$, we deduce that $\diam(W_z) < r \diam(Q)/10$, hence $W_z \subset B(z,r) = W$. In particular, $f^n(W)$ contains at least one square from $\cQ$. 

                If we assume, on the other hand, that $r' < r$, then $f^{n-1}(W) \supset B(f^{n-1}(z), 1/2)$, by bounded distortion, and $f^n(W)$ is huge, in particular it contains at least one square $Q \in \cQ$. 

                 We have shown that in both subcases (so whenever $|f^n(z)| \geq M$),
                 the squares $\{Q \in \cQ : Q \subset f^n(W)\}$ fill some definite proportion of $f^n(W)$. 
                Consequently, there is some independent constant $\gamma' >0$ and  a collection of pairwise-disjoint subsets $W_i \subset  W$,  each mapped by $f^n$ onto an element $Q_i$ of $\cQ$ with $m(\bigcup_i W_i)/m(W) > \gamma'$, say. 
                One can apply Lemma~\ref{lem:proportion1} on each $Q_i$ 
		to obtain a ball $B(y, \gamma/x^3)\subset Q_i$ say and some $l\geq 0$ such that $f^{l}$ maps the ball univalently  into $\cR(x)$. Let $Z_i := B(y, \gamma/\Delta x^3)$ and let 
		 $V_i = W_i \cap f^{-n}(Z_i)$. By the Koebe principle, if $j,k \geq 0$ and $j+k \leq n+l$, the distortion of $f^j$ is bounded by $2$ on $f^k(V_i)$.

                The distortion bound  implies $V_i$ contains a ball $B_i$ of radius $\diam(V_i)/4$, so $m(B_i)/m(V_i) > 1/16$. 
                The bound (\ref{eq:jkl}) gives a bound on $|Df^n_{|B_i}|$ of $|\lambda_0|e^{x} 40/r$, which implies $B_i$ has radius $ \geq (\gamma/\Delta x^3) r/40|\lambda_0| e^{x} > e^{-2x} r$, provided $x$ is large enough. 
		This is the required estimate on the radii. 

		Continuing on, let $k \leq n+l$ be minimal such that $f^k(B_i) \subset \cR(x)$. 
		Thus there is a point in $f^{k-1}(B_i)$ not in $\cR(x)$, so, by bounded distortion, $|Df| < 2|\lambda_0| e^x$ on $f^{k-1}(B_i)$. 
		Univalence on $f^{k-1}(B_i)$ implies this set does not contain a ball of radius $\pi$, so the distortion bound of $2$ for $f^{k-1}$ on $B_i$ and the estimate for the radius of $B_i$ combine to imply
                $$
                |Df^{k-1}_{|B_i}| < 80\pi |\lambda_0| e^{x}x^3 \Delta /r\gamma. 
                $$
                Thus $|Df^k_{|B_i}| < 160\pi |\lambda_0|^2 e^{2x} \Delta x^3/r\gamma < e^{3x} /r$, if $x$ is large enough.  

                We note to finish that $m(Q_i) = 4\pi^2$ while $m(f^n(V_i)) = \pi \gamma^2/\Delta^2 x^6$, so $$m(V_i)/m(W_i) > \gamma^2/\Delta^2 x^6 16\pi$$ for each $i$. Combining this with the uniform estimates for $m(B_i)/m(V_i)$, $m(\bigcup_i W_i)/m(W)$ and $m(W)/m(B(z,r)$, we conclude  $m(\bigcup_i B_i) /m(B(z,r))  > \kappa/x^6 $ for some $\kappa>0$ independent of $x$. This completes the proof of the case $|f^n(z)| \geq M$. 
        \end{proof}
        We call a square $D$ \emph{dyadic} if $2\pi 2^k D$ is an element of $\cQ$ for some integer $k \geq 1$; $2^{-k}$ is then called the \emph{scale} of $D$. 
        Since each ball contains a square of comparable size, and vice versa, the previous lemma also holds for dyadic squares, with perhaps a slightly smaller scale (which we estimate crudely). 

        \begin{lem}\label{lem:dyadic}
            There are constants $ \kappa>0, M_0 \geq M$ such that the following holds. Let $k\geq 3$. Let $x\geq M_0$ and let $D$ be a dyadic square of scale $2^{-k}$.  Then there is a finite collection of pairwise-disjoint dyadic squares $D_i \subset D$, each of scale $>  e^{-3x}2^{-k}$, such that 
        \begin{itemize}
            \item
                $m(\bigcup_i D_i) / m(D) > \kappa / x^6;$ 
            \item
                for each $D_i$ there is an $n_i\geq0$ with $f^{n_i}(D_i) \subset \cR(x)$; 
            \item
                $f^{n_i}$ is univalent on $B(z, \Delta \diam(D_i))$ for all $z \in D_i$;
            \item
                $|Df^{n_i}_{|D_i}| < e^{3x}2^k$. 
                \end{itemize}
        \end{lem}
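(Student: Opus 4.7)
The plan is to deduce this from the preceding ball version, as the author indicates, by passing between balls and dyadic squares of comparable size. Let $z_D$ denote the centre of $D$, set $r := 2^{-k-1}$ (so that $r \in (0,1)$ since $k \geq 3$), and let $B := B(z_D, r)$ be the ball inscribed in $D$. I would apply the preceding lemma with $z = z_D$, $x$, $r$ to obtain a pairwise-disjoint collection of balls $B_i = B(z_i, \rho_i) \subset B \subset D$ satisfying $\rho_i > e^{-2x} r$, $m(\bigcup_i B_i) > (\kappa/x^6)\, m(B)$, $f^{n_i}(B_i) \subset \cR(x)$ and $|Df^{n_i}|_{B_i} < e^{3x}/r$.

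Inside each $B_i$ I would inscribe a dyadic square $D_i$ of the largest scale $2^{-k_i}$ with $\diam(D_i) < \rho_i/(2\Delta)$. Maximality forces $2^{-k_i}$ to be within a universal constant factor of $\rho_i/\Delta$, and the lower bound $\rho_i > e^{-2x}\, 2^{-k-1}$ therefore yields $2^{-k_i} > e^{-3x}\, 2^{-k}$ provided $M_0$ is taken large enough to absorb $\Delta$ and any universal constants into the exponent. A dyadic square at this scale can be placed inside $B(z_i, \rho_i/2)$ by a simple pigeonhole over translates of the dyadic grid, so $D_i \subset B(z_i, \rho_i/2)$; consequently $B(z, \Delta \diam(D_i)) \subset B(z_i, \rho_i)$ for every $z \in D_i$.

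The univalence of $f^{n_i}$ on $B(z, \Delta \diam(D_i))$ is the main obstacle, because the preceding lemma does not state univalence as such. To recover it one must inspect its proof rather than its statement: each $B_i$ appears there as a ball of radius $\diam(V_i)/4$ inside a set $V_i$ on which $f^{n_i}$ is univalent, so $V_i \supset B(z_i, \rho_i)$, and univalence on $B(z, \Delta \diam(D_i)) \subset B(z_i, \rho_i) \subset V_i$ is then immediate.

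The remaining bookkeeping is routine. Since $m(D_i) \geq c_\Delta\, m(B_i)$ with $c_\Delta$ depending only on $\Delta$, and $m(B)/m(D) = \pi/4$, the measure estimate transfers as $m(\bigcup_i D_i)/m(D) > \kappa'/x^6$ for a new universal $\kappa'$. The derivative bound $|Df^{n_i}|_{D_i} \leq |Df^{n_i}|_{B_i} < e^{3x}/r = 2^{k+1}\, e^{3x}$ matches the claimed $2^k\, e^{3x}$ up to a universal constant, which is exactly the crude estimation the author foreshadows.
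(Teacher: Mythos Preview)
Your proposal is correct and follows exactly the approach the paper indicates: the paper gives no proof beyond the remark preceding the lemma (``each ball contains a square of comparable size, and vice versa''), and your argument is a careful fleshing-out of that remark. Your observation that the univalence clause is not in the \emph{statement} of the ball lemma and must be extracted from its proof (via the sets $V_i \supset B_i$) is well-spotted and is the only point requiring real care; the paper glosses over it entirely.
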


        If at all scales, a certain proportion gets mapped far out to the right, then almost every point does. The next lemma gives bounds on the time needed for a \emph{large} proportion of points to get mapped far out to the right, together with a bound on the corresponding derivatives.

        \begin{prop} \label{lem:M0}
            Let $S$ be a bounded set. There is a  constant $M_0$ such that the following holds. 
            Let $x > M_0$. 
            Let $S_*$ denote the set of points $z$ such that 
                    the first entry to $\cR(x)$ happens at time $n(z)$ with 
            \begin{itemize}
                \item
                    $|Df^{n(z)}(z)| < e^{x^9}/2$;
                \item
                    $n(z) \leq e^{2x}$. 
                \end{itemize}
            Then $m(S \setminus S_*) \leq  1/x$. 
        \end{prop}
        \begin{proof}
            Let $\kappa, M_0$ come from Lemma~\ref{lem:dyadic}.
            We can cover $S$ with a finite number of dyadic squares of scale $2^{-3}$, each contained in $B(S,1)$, and with total area $a$, say. 
            If $M' > M_0$ is sufficiently large, $x > M'$ and $p = x^{7}$, then 
            $$
            (1 - \kappa/x^6)^p a < e^{-\kappa x/2}a< 1/x.
            $$
            At least a proportion $\kappa/x^6$ of each of these dyadic squares is covered by dyadic squares of scale $\geq 2^{-3} e^{-3x}$ given by   
            Lemma~\ref{lem:dyadic}. The remainder, less than $(1-\kappa/x^6)$, can be covered by other dyadic squares of scale $\geq 2^{-3}  e^{-3x}$ and we can apply Lemma~\ref{lem:dyadic} to each of these squares. 
            Proceeding inductively, after $p$ such applications, we end up with a collection $\cD$ of dyadic squares such that 
            $$
            m\left(S \setminus \bigcup_{D\in \cD}D\right) \leq (1 - \kappa/x^6)^p a  < 1/x
            $$
            and such that
            each $D \in \cD$ satisfies
            \begin{itemize}
                \item
                    the scale of $D$ is $\geq 2^{-3}  (e^{-3x})^p$;
                \item
                    there is an $n_D\geq0$, with $f^{n_D}(D) \subset \cR(x)$;
                \item
                $f^{n_D}$ is univalent on $B(z, \Delta \diam(D))$ for all $z \in D$;
            \item
                $|Df^{n_D}_{|D}| < (e^{3x})^{p+1}$. 
                \end{itemize}
                We wish to show that $S_*$ contains $\bigcup_{D \in \cD} D$. 
                For a point $y \in D \in \cD$, $n_D$ is not necessarily the first entry time $n(y)$ to $\cR(x)$, but for all $j < n_D$, Lemma~\ref{lem:Mdev} implies  $3|Df^j(y)| < |Df^{n_D}(y)|$, so $|Df^{n(y)}(y)| < (e^{3x})^{p+1} < e^{x^9}/2$.

                It remains to show that $n_D$ is not too large. 
                It can be assumed that $n_D$ is minimal such that $f^{n_D}(D) \subset \cR(x)$. Now $f^j$ on $B(z,\diam(D))$ is univalent with distortion bounded by 2 
                for all $j\leq n_D$, by choice of $\Delta$, so $f^j(B(z,\diam(D)))$ cannot contain a ball of radius $\pi$ for any $j < n_D$ and thus has diameter bounded by $4\pi$. In particular, it does not intersect $\cR(x+4\pi)$. 
                Thus for $1 \leq j \leq n_D$, $f^j(D) \subset B(0, |\lambda_0|e^{x+4\pi})$.

                By Proposition~\ref{prop:hyp}, inside the region $B(0, |\lambda_0|e^{x+4\pi})$ the derivative multiplies by at least 3 at least every $C_0 e^{x}$ steps for some $C_0>0$.   
                Therefore
            $$
            3^{n_D/C_0e^{x}} < |Df^{n_D}_{|D}|, 
            $$
            so taking logs and using the estimate for the derivative,
            $$
            n_D/C_0e^{x} < 3x(p+1), 
            $$
            $$
            n_D < C_0  (p+1)(3x) e^{x} < e^{2x}, 
            $$
	    provided $x$ is large enough, $x > M''$ say.
            We reset $M_0 := \max(M',M'')$. 
        \end{proof}

        Next we show that the first entry usually happens a bit further to the right, and we recover some Markov property (\emph{equal or disjoint}) which keeps the subsequent arguments from getting too messy. 
        \begin{lem} \label{lem:xplus}
            Given $C  > 0$, there exists $M_0$ such that, if $A \subset B(P(f), 1)$ is a simply-connected open set with $\partial A$ of length at most $C$, then for all $x > M_0$ the following holds. 

            There exists a set $A_* \subset A \setminus B(\partial A, x^{-1/4})$ and a partition $\cW$ of $A_*$ into elements $W$ with associated numbers $n_W$, such that
            \begin{itemize}
                \item 
                    $m(A\setminus A_*) < 1/2 \log x$;
                \item
                    $|Df^{n_W}| < e^{x^9}$ on $W$;
                \item
                    $n_W \leq e^{2x}$;
                \item
                    $n_W$ is the first entry time to $\cR(x+\log \frac32)$;
                \item
                    $f^{n_W}$ maps $W$ biholomorphically onto a square from $\cQ$;
                \item
                    $f^{n_W}(W) \subset \cR(x + 2\sqrt{x})$.
            \end{itemize}
        \end{lem}
        \begin{proof}
            In the proof, the sets $W$ obtained will be mapped biholomorphically by corresponding $f^{n_W}$ onto unions of squares of $\cQ$ rather than onto single squares. This is of no import, as there will be a subpartition of each $W$ whose elements each get mapped by $f^{n_W}$ onto an element of $\cQ$. 

            For large $x$, a standard estimate for the area of a tubular neighbourhood gives
            $$m(B(\partial A, 2x^{-1/4})) \leq 4 C x^{-1/4} + 4 \pi x^{-1/2} < 8C x^{- 1/4}.$$
            Therefore, setting
            $A_x:=A \setminus B(\partial A, 2x^{-1/4})$, we have $m(A_x) > m(A) - 8Cx^{-1/4}$. 

            Let $S_*$ be given by Proposition~\ref{lem:M0} for $S = B(P(f),1)$ (and $x$ sufficiently large). 
            Set $A' : = S_* \cap A_x$, so 
	    \begin{equation}\label{eq:AA14}
	    m(A \setminus A') <  1/x + 8Cx^{-1/4}.
	    \end{equation}
	    Let $z \in A'$ and let $n_0 = n_0(z)$ be the associated number $n(z)$ given by Proposition~\ref{lem:M0}. Then $n_0$ is the first entry time of $z$ to $\cR(x)$, while $n_0 \leq e^{2x}$  and 
	    \begin{equation}\label{eq:dbn0}
	    |Df^{n_0}(z)| < e^{x^9}/2. 
	    \end{equation}


            Suppose first, in case one, that $\Re(f^{n_0}(z)) < 2\pi \lfloor x + x^{3/4} /2\pi \rfloor$. Let $T$ denote the partial strip 
            $$
            \{w : x \leq \Re(w) <  2\pi \lfloor x + x^{3/4}/2\pi \rfloor; 2j\pi \leq \Im(w) < (2j+2)\pi\}
            $$
            containing $f^{n_0}(z)$, for the relevant integer $j$. By Lemma~\ref{lem:Mdev2}, the neighbourhood  $W_*$ of $z$ mapped univalently by $f^{n_0}$ onto $T$ has diameter less than  $x^{3/4}/x = x^{-1/4}$, while $z \in A_x$, so $W_* \subset A \setminus B(\partial A, x^{-1/4})$. 
            Let 
            $T_+ := T \cap \cR(2\pi \lfloor x + 3\sqrt{x} /2\pi \rfloor)$, and set $W_z := W_* \cap f^{-n_0}(T_+)$. 
            Note $W_z$ does not necessarily contain $z$. 
            Then $m(T \setminus T_+)/m(T) < 4\sqrt{x}/x^{3/4}$, so  
            $$m(W_z)/m(W_*) \geq 1 - 16x^{-1/4},$$ using a distortion bound of 2 from Lemma~\ref{lem:distnQ}.

            If, in case two, $\Re(f^{n_0}(z)) \geq 2\pi \lfloor x + x^{3/4} /2\pi \rfloor$,
            let $T$ denote the partial strip 
            $$
         \{w : 2\pi k \leq \Re(f^{n_0}(w)) < 2\pi (k+1); 2j\pi \leq \Im(w) < (2j+2)\pi\}
            $$
            containing $f^{n_0}(z)$, for the relevant integers $k, j$. As before, by Lemma~\ref{lem:Mdev2}, the neighbourhood  $W_z = W_*$ of $z$ mapped univalently by $f^{n_0}$ onto $T$ has diameter less than $1/x < x^{-1/4}$, 
            and $f^{n_0}$ on $W_z$ has distortion bounded by 2.
            Again we deduce $W_z \subset A \setminus B(\partial A, x^{-1/4})$. 

            In both cases, for $j < n_0$, $f^j(W_*)$ has diameter bounded by $2x^{3/4}/x < \log \frac32$, so $f^j(W_*) \cap \cR(x +\log \frac32) = \emptyset$. Meanwhile, $f^{n_0}(W_z) \subset \cR(x + 2\sqrt{x})$. In particular, on $W_z$, $n_0$ is the first entry time to $\cR(x + \log \frac32)$.

            We claim that for $z_1,z_2 \in A'$, the sets $W_1 = W_{z_1}, W_2 = W_{z_2}$ are either equal or  disjoint. Let $n_1 = n_0(z_1)$, $n_2 = n_0(z_2)$. 
            The partial strips $f^{n_1}(W_1), f^{n_2}(W_2)$ are either equal or disjoint. If $n_1 = n_2$ it follows that $W_1, W_2$ are either equal or disjoint. So suppose $n_1 < n_2$ and $W_1 \cap W_2 \ne \emptyset$. 
            But $f^{n_1}(W_1) \subset \cR(x +\sqrt{x})$, so
            $f^{n_1}(W_2) \cap \cR(x+\sqrt{x}) \ne \emptyset$, contradicting $f^j(W_2) \cap \cR(x +\frac32 ) = \emptyset$ for $j < n_2$. We conclude that  the claim holds.

            We thus obtain a (necessarily finite) pairwise-disjoint collection $\cW$ of (such) subsets $W \subset A \setminus B(\partial A, x^{-1/4})$ with 
	    \begin{equation}\label{eq:WA14}
	    m\left(\bigcup_{W \in \cW} W\right) = \sum_\cW m(W)  \geq (1 - 16x^{-1/4}) m(A') .
	    \end{equation}
            Set $A_* := \bigcup_{W\in \cW} W$. Together with \eqref{eq:AA14}, \eqref{eq:WA14} implies
	    \begin{equation*}
                \begin{split}
            m(A \setminus A_*) &\leq m(A) - m(A') + 16x^{-1/4} m(A') \\
             & <  1/x + 8Cx^{-1/4} + 16x^{-1/4}m(B(P(f),1)) \\
             & < 1/2\log x.
         \end{split}
	    \end{equation*}
	    
            If $W = W_z$ for some $z \in A'$, set $n_W := n_0(z)$, so $n_0 < e^{2x}$. 
            The distortion bound of $2$ combined with \eqref{eq:dbn0} gives the required derivative estimate $|Df^{n_W}| < e^{x^9}$ on $W$. 
        \end{proof}

\section{Far-right dynamics} \label{sec:FR}
    The  dynamics far to the right is relatively easy to understand (and long-known, see for example \cite{Rees86, Lyubich:Exp, McMullen:Area}). Far-right squares from $\cQ$ get mapped to enormous annuli, with approximately half getting mapped  to the far-far-left, and half getting mapped  to the far-far-right. That which gets mapped to the right, 
    subsequently half of it gets mapped farther to the left, half farther to the right, and so on. Thus most points far to the right get mapped reasonably quickly far, far to the left. A mathematical formulation is given by the following two lemmas. 
\begin{lem} \label{lem:annulus}
    Suppose $n, S$ are such that $f^n$ maps $S$ biholomorphically onto some $Q \in \cQ$.  Provided the real rumber $y$ satisfying $\re(Q) = [y, y+ 2\pi)$ is large enough, 
 there is a finite partition of $S$ into subsets $S_*, S_L, S_1, S_2,\ldots, S_p$ such that the following holds:
    \begin{itemize}
        \item
            $m(S_*) < m(S) / 2y$;
        \item
            $f^{n+1}(S_L) \subset \cL(-e^{y - \sqrt{y/2}})$;
        \item
            $m(S)/9 < m(S_1 \cup \cdots \cup S_p) < \frac78 m(S)$;
        \item
            each $S_l$, $1 \leq l \leq p$ is mapped by $f^{n+1}$ biholomorphically onto an element of $\cQ$ contained in $\cR(e^{y - \sqrt{y/2}})$;
        \item
            $|\re(f^{n+1}(z))|^2 > |f^{n+1}(z)|$ for all $z \in S \setminus S_*$. 
    \end{itemize}
\end{lem}
\begin{proof}
    The proof will use that $A := f(Q)$ is a gigantic annulus, so most of it (by area) is a long way from the imaginary axis. Note that on $Q$, the distortion of $f$ is bounded by $e^{2\pi}$, so on $S$, the distortion of $f^{n+1}$ is bounded by $2 e^{2\pi}$. 

    For  $r = |\lambda_0|e^y$, the annulus $A$ has inner radius $r$ and outer radius $r e^{2\pi}$. Its area is $\pi r^2 (e^{4\pi} -1)$. 
    Let $X$ be the subset of $A$ consisting of points close to the imaginary axis and close to $f(\partial Q)$ defined by
    $$X := \{z \in A: |\re(z)| \leq |\lambda_0|^{-1} r e^{-\sqrt{y/2}} + 2\pi \} \cup B(f(\partial Q), 2\pi).$$
    Then $m(X)$ is bounded by $2|\lambda_0| e^{2\pi} r^2 e^{-\sqrt{y/2}}$. 
    Thus $m(X)/m(A) < e^{-\sqrt{y/3}}$, say, for large $y$. From this and the distortion bound we deduce that $m(S \cap f^{-n-1}(X)) < m(S) /2y$, provided $y$ is large enough. 

    Set $S_L := f^{-n-1}(A \cap \cL(0) \setminus X)$. Then $f^{n+1}(S_L) \subset \cL(-e^{y - \sqrt{y/2}})$. 

    Let $Y$ be the union of squares from $\cQ$ containing points of 
    $A \cap \cR(0) \setminus X$.
    From the definition of $X$,  
    $ Y \subset A \setminus f(\partial Q)$ and  $Y \subset \cR(e^{y-\sqrt{y/2}})$. As 
    $$\frac49 m(Q) < m(f^{-1}(Y)\cap Q) < m(Q)/2,$$ using a distortion bound of $2$ we deduce  $m(S)/9 < m(f^{-n-1}(Y) \cap S) < \frac{7}8 m(S)$ (one could improve this estimate to approximately $\frac12m(S)$, but it is unnecessary). One can clearly partition the pullback of $Y$ into the required sets $S_1, \ldots, S_p$. 

    Set $S_* := S \setminus (S_L \cup S_1 \cup \cdots \cup S_p)$. Since $f^{n+1}(S_*) \subset X$, we have from above that $m(S_*) < m(S)/2y$. 

    For $z \in S \setminus S_*$, we have 
    $$ e^{y - \sqrt{y/2}} \leq |\re(f^{n+1}(z))| \leq |f^{n+1}(z)| \leq |\lambda_0| e^y e^{2\pi} < e^{3y/2} \leq |\re(f^{n+1}(z))|^2.$$ 
\end{proof}

The square root terms in the following lemma are not exactly elegant, but they are used in the proof of Proposition~\ref{lem:s0}. 
\begin{lem} \label{lem:QRtoL}
        Let $E : y \to e^{y}$. 
    Let $Q \in \cQ$ and suppose $Q \subset \cR(x+2\sqrt{x})$. 
    If $x >0$ is sufficiently large,  there is a set $Q_0 \subset Q$ such that $m(Q_0)/m(Q) > 1/x$ 
    and for all $z \in Q_0$, there is an integer $k = k(z)$ such that the following holds:
    \begin{itemize}
        \item
            $1 \leq k \leq x$;
        \item
            $f^k(z) \in \cL(-e^{x + \sqrt{x}}) \cap \cL(-E^k(x))$;
        \item
            $|Df^k(z)| < |f^k(z)|^2 < |\re(f^k(z))|^4$;
        \item
            $m(\{z \in Q_0 : k(z) \geq 4\}) > m(Q)/1000$. 
    \end{itemize}
    Moreover, for $1 \leq j < k$, 
        $f^j(z) \in \cR(E^j(x))$ and
            $|Df^j(z)| < |f^j(z)|^2$.
\end{lem}
    \begin{proof}
        Note that if $y \geq x + 2\sqrt{x}$, then 
	$$y - \sqrt{y/2} \geq x + 2\sqrt{x} - \sqrt{(x+ 2\sqrt{x})/2} > x + \sqrt{x}.$$
	Moreover $e^{y-\sqrt{y/2}} >  e^{x + \sqrt{x}} > e^x + 2\sqrt{e^x}$.  
        Inductively applying Lemma~\ref{lem:annulus}, we obtain 
        sets $Q = Y^0 \supset Y^1 \supset \cdots$ and a collection of
        pairwise-disjoint sets $S^0_L, S^1_L, \ldots, S_*^0, S_*^1, \ldots$ 
        for which 
    \begin{itemize}
        \item
            for $0 \leq j \leq l$,  $f^j(Y^l) \subset \cR\left(E^j(x) +2\sqrt{E^j(x)}\right) \subset \cR(E^j(x))$; 
        \item 
            $Y^l$ can be partitioned into sets mapped biholomorphically by $f^{l}$ onto squares from $\cQ$ (which together with the previous point allows one to proceed inductively); 
        \item
            $Y^l = S_L^{l} \cup S_*^{l} \cup Y^{l+1}$; 
        \item 
            $m(S_*^l) < m(Q) \left( \frac1{2E^{l}(x)}\right);$
        \item 
            $ m(Q)/9^{l} < m(Y_l) < m(Q) (\frac78)^{l}; $
        \item
            for $z \in S_L^l$, $f^{l+1}(z) \in \cL(-e^{x+ \sqrt{x}}) \cap \cL(-E^{l+1}(x))$;
        \item 
            for $z \in S_L^l$ and $1 \leq j \leq l+1$,  
            $
            |f^j(z)|
            < 
            |\re(f^j(z))|^2 $.
    \end{itemize}
        Thus
             $Y^l = 
             Q\setminus (S_L^0 \cup \cdots \cup S_L^{l-1} \cup S_*^0 \cup \cdots \cup S_*^{l-1})$.
    Set $Q_0 :=  S_L^0 \cup \cdots \cup S_L^{\lfloor x \rfloor -1},$
    so $Q_0 = Q \setminus \left( Y^{\lfloor x \rfloor} \cup S_*^0 \cup \cdots \cup S_*^{\lfloor x \rfloor} \right)$. 
    From the two measure estimates, 
    $$m( Q_0)/m(Q) > \left(1 - \left(\frac78\right)^{\lfloor x \rfloor} - \sum_{l \geq 0} \frac{1}{2E^l(x)} \right) > 1/x.$$
    For $z \in S_L^l$, we set $k(z) := l+1$. If $z \in Y^3 \setminus Q_*$ then $k(z) \geq 4$, and $m(Y^3 \setminus Q_*) /m(Q) \geq 9^{-3} - 1/x > 1/1000$. 

    It only remains to check the derivative. We have, for $z \in S_L^l$ and $1 \leq j \leq l$, 
    $$|f^{j}(z)|^2 < |\re(f^j(z))|^2 < |\lambda_0|e^{\re(f^j(z))} =  |f^{j+1}(z)|$$
    so, for $0 \leq j \leq l$,
    \begin{equation*}
    \begin{split}
        |Df^{j+1}(z)| & = \prod_{a=1}^{j+1} |f^a(z)| 
	 \\
	 & \leq |f^{j+1}(z)|^{ 1+ \frac12 + \frac14 + \cdots + 2^{-j}} 
         \\
         & \leq |f^{j+1}(z)|^2 \leq |\re(f^{j+1}(z))|^4,
     \end{split} 
     \end{equation*}
         as required.
 \end{proof}

 \section{First entry to the left half-plane} \label{sec:lhp}

 A key claim in the following proposition is that for many points, the first entry to $\cL(-|\lambda_0|e^x)$ actually lands in $\cL(-e^{x + \sqrt{x}})$. This added distance will be needed, see Lemma~\ref{lem:guide}.

        \begin{prop} \label{lem:s0}
            Given $C  > 0$, there exists $M_0$ such that, if $A \subset B(P(f), 1)$ is a simply-connected open set with $\partial A$ of length at most $C$, then for all $x > M_0$ the following holds. 
        There is a set $A_0$ of points $z \in A \setminus B(\partial A, x^{-1/4})$ such that 
                    the first entry to $\cL(-2|\lambda_0|e^x)$ happens at time $n(z)$ with 
            \begin{enumerate}
                \item
                    \label{enum:s0a}
                    $f^{n(z)}(z) \in \cL(-e^{x + \sqrt{x}})$
                \item
                    \label{enum:s0b}
                    $e^x < |Df^{n(z)}(z)| <  e^{x^9}|\Re(f^{n(z)}(z))|^4$;
                \item
                    \label{enum:s0c}
                    $n(z) \leq e^{3x}$;
		    \item
                    \label{enum:s0d}
		    there exists $n_0(z) < n(z)$ for which $|Df^{l}(z)| <  e^{x^9}$ for $l \leq n_0$ and for which,
		    for $l = n_0(z)+1, \ldots, n(z)$, 
		    $$ |Df^{l}(z)| <  e^{x^9} |f^l(z)|^2;$$
                \item
                    \label{enum:s0e}
                    $\inf_{j+k\leq n(z)} |Df^j(f^k(z))| > 2\exp(-2|\lambda_0|e^{x})$;
                \end{enumerate}
                and with $m(A \setminus A_0) \leq  1/\log{x}$. 
%
        \end{prop}
        \begin{proof}
            Let $A_*$,
            with its attendant partition $\cW$,
            be given by Lemma~\ref{lem:xplus}. 
             Let $W \in \cW$ and let $n_W$ be given by Lemma~\ref{lem:xplus}.
            Let $Q = f^{n_W}(W) \in \cQ$, and note $Q \subset \cR(x+2\sqrt{x})$.  Let $Q_0(W) = Q_0$ be given by Lemma~\ref{lem:QRtoL}.
            Set $$A_0 := \bigcup_{W \in \cW} W \cap f^{-n_W}(Q_0(W)).$$
            Then \ref{enum:s0a}-\ref{enum:s0d} are immediately obtained combining the estimates of Lemma~\ref{lem:xplus} and Lemma~\ref{lem:QRtoL}, with $n_0(z) = n_W$ for $z \in W$. 

            It remains to justify \ref{enum:s0e} and the measure estimate. Now $n_0(z)$ is the first entry time to $\cR(x+\log \frac32)$, so for $1 \leq j \leq n(z)$, 
            $$|f^j(z) | \geq |\lambda_0| \exp(- \frac32 |\lambda_0| e^x) > 
            2\exp(-2 |\lambda_0| e^x)/\beta_1 ,$$
            where $\beta_1$ comes from Lemma~\ref{lem:mindev}, and \eqref{eq:mindeveq} implies \ref{enum:s0e}.
            For the measure estimate, 
            note 
            $m(Q_0)/m(Q) > 1 - 1/x$ so, 
            with a distortion bound of 2 for $f^{n_W}$ on $W$, 
            $$
            m(A_0)/m(A_*) > 1 - 4/x.$$
            Meanwhile, $m(A \setminus A_*) <  1/2 \log x$ 
            and $m(A_*) < m(B(P(f),1))$ 
            so $$m(A \setminus A_0) < 1/2 \log x + m(A_*) 4/x < 1/\log x,$$
            as required.
        \end{proof}

 \section{Lyapunov exponents almost never exist} \label{sec:Lyap}
    In this section we prove Theorems~\ref{thm:LE1} and~\ref{thm:LE2}. 
        We shall use the fact that Lebesgue measure is conservative and ergodic, see \cite{MeBartek}, to go from statements about positive-measure subsets to statements about full-measure subsets.
    \begin{lem} \label{lem:LEzero}
        For almost every $z$ and any Riemannian metric $\rho$, 
        $$
    \limsup_{n \to \infty} \frac{1}{n} \log |D_\rho f^n(z)| \geq 0.$$
\end{lem}
    \begin{proof}
	By Lemma~\ref{lem:Mdev} say, there is an $M$ such that the first return map $\phi$ to $B(M+1, 1)$ has $|D\phi| > 3$. Since Lebesgue measure is conservative and ergodic, almost every $z$ enters $B(M+1, 1)$ infinitely often. Thus for amost every $z$, there is a sequence $n_k$ with $f^{n_k}(z) \in B(M+1,1)$ and $|Df^{n_k}(z)| \to +\infty$. Since $B(M+1,1)$ is bounded, $|Df_\rho^{n_k}(z)| \to +\infty$. 
    \end{proof}
    \begin{lem} \label{lem:LEinfty}
        For almost every $z$ and any Riemannian metric $\rho$, 
        \begin{equation}\label{eq:LEl1}
    \liminf_{n \to \infty} \frac{1}{n} \log |D_\rho f^n(z)| = -\infty.
    \end{equation}
    For almost every $z$ and the Euclidean metric,
        \begin{equation}\label{eq:LEl2}
    \limsup_{n \to \infty} \frac{1}{n} \log |D f^n(z)| = +\infty.
            \end{equation}
\end{lem}
    \begin{proof}
        Let $x > 0$ be large. 
        Let $A = B(0,1)$, say, and let $A_*$ and its attendant partition $\cW$ be given by  
        Lemma~\ref{lem:xplus}. Then $m(A_* ) > \pi/2$ say. Let $W \in \cW$ and let $n_W \leq e^{2x}$ be given by Lemma~\ref{lem:xplus}. Then 
        $|Df^{n_W}| < e^{x^9}$ on $W$, and $Q_W := f^{n_W}(W) \in \cQ$ and $Q_W \subset \cR(x + 2\sqrt{x})$. 

        By Lemma~\ref{lem:QRtoL} there is a subset $S_W \subset Q_W$ with $m(S_W) \geq m(Q_W)/1000$ for which the following holds. Let $z \in W \cap f^{-n_W}(S_W)$ and set $w: = f^{n_W}(z)$.  There is a $k = k(z)$ with $4 \leq k \leq x$,   
        \begin{itemize}
            \item
        $f^{k}(w) \in \cL(-E^4(x))$, where $E : y \mapsto e^{y}$;
    \item
        for $1 \leq j \leq k$,
             $|Df^{j}(w)| < |f^j(w)|^2 < |\re(f^j(w)|^4$.
        \end{itemize}
        Then (by Lemma~\ref{lem:Mdev2})
        $$|Df^{n_W + k}(z)| > |f^{n_W + k}(z)| > E^4(x).$$
    Meanwhile, $n_W + k \leq e^{2x} + x < 2 e^{2x}$. Thus 
        $$
        \frac1{n_W +k} \log |Df^{n_W + k}(z)| > E^3(x)/2e^{2x} \gg x.$$
        Going one step further will give us a tiny derivative. 
    \begin{equation*}
        \begin{split}
        |Df^{n_W + k+ 1}(z)| & \leq e^{x^9} |\re(f^k(w)|^4
        |\lambda_0| \exp (\re(f^k(w)))
        \\ &
        \leq e^{x^9} \exp (\re(f^k(w)) /2) 
        \\ &
        \leq \exp( -E^4(x)/2 + x^9)
        \\ &
        \leq \exp( -E^4(x)/3).
        \end{split}
    \end{equation*}
    Again, $n_W + k +1 < 2 e^{2x}$, from which we deduce  
        $$
        \frac1{n_W +k +1} \log |Df^{n_W + k+ 1}(z)|  \ll -x.$$

        Let $X_x = \bigcup_{W\in \cW} (W \cap f^{-n_W}(S_W))$. Using a distortion bound of $2$, we obtain from the construction that 
        $$m(X_x) > m(A_*) \min_W \frac{m(S_W)}{4m(Q_W)} > \pi/8000$$
        and that for each $z \in X_x$, there is an $n$ with 
        $$
        \frac1n \log|Df^n(z)| >x,$$
        $$
        \frac1{n+1} \log|Df^{n+1}(z)| < -x.$$
        Necessarily, $f^{n+1}(z) \in B(0,1)$, so for some $C >0$ depending only on $\rho$,
        $$
        \frac1{n+1} \log|Df_\rho^{n+1}(z)| < -Cx.$$

        Taking a sequence of $x_j$ tending to $+\infty$, we obtain sets $X_{x_j}$ each with measure at least $\pi/8000$ and contained in the bounded set $B(0,1)$. Thus there  is a set $X_\infty$ of positive measure for which each $z \in X_\infty$ is in infinitely many of the $X_{x_j}$. Thus
        \eqref{eq:LEl1}, \eqref{eq:LEl2} hold for all $z \in X_\infty$, which implies \eqref{eq:LEl1}, \eqref{eq:LEl2} hold for all $z \in \bigcup_{n\geq 0} f^{-n}(X_\infty)$. 
        Using ergodicity and conservativity of Lebesgue measure (\cite{MeBartek}), $\bigcup_{n\geq0} f^{-n}(X_\infty)$ has full measure, completing the proof.
    \end{proof}
    Showing that the upper Lyapunov exponent is $0$ almost everywhere for the spherical metric is   more subtle. We need the following lemma. 
    
    Let $H : t \mapsto \exp(t^{1/10})$. For $t$ large enough, $H(t) > t$ and $H^2(t) > e^t$. 
    
    \begin{lem} \label{lem:spherest}
        Let $R >0$ be sufficiently large and let $Q \in \cQ$ be a subset of $\cL(-R)$ satisfying $|z| < 2|\re(z)|^2$ for all $z \in Q$. 
        Let $Z \subset \ccc$ and $n_Z \geq 0$ be such that $f^{n_Z}$ maps $Z$ bihilomorphically onto $Q$.
        There is a subset $Z_0 \subset Z$ and for each $z \in Z_0$ a number $n(z) \geq 1$ such that the following holds. 
        \begin{itemize}
            \item
        For $j = 1, \ldots, n(z)$, 
        $$
        \frac1j \log |D_\sigma f^j(f^{n_Z}(z))| < 1/ \log R;$$
            \item
        $m(Z \setminus Z_0)/m(Z) < 1/\log \log R$;
            \item
                if $z \in Z_0$, $f^{n_Z + n(z)}(z) \in \cL(-H(R))$; 
    \item
        $|f^{n_Z + n(z)}(z)| < 2|\re(f^{n_Z + n(z)}(z))|^2$;
            \item
                there is a finite partition of $Z_0$ into sets $U_i$ with associated numbers $n_i$, such that $n(z) = n_i$ for $z \in U_i$,  and such that $f^{n_Z + n_i}$ maps $U_i$ biholomorphically onto an element of $\cQ$. 
\end{itemize}
    \end{lem}

        \begin{proof}
            Let $y \geq R$ satisfy $\re(Q) = [-y -2\pi, -y)$. 
            Let $B_y = B(0,|\lambda_0| e^{-y})$, so $f(Q) \subset B_y$. 
            Let $\delta_0$ be given by Lemma~\ref{lem:Vballs}, so $0 < \delta_0 < \delta$. Let $n_Q$ be the maximal positive integer such that $f^j(B_y) \subset B(f^j(0), \delta_0)$ for $j = 0,1,\ldots, n_Q$. According to Lemma~\ref{lem:Vballs} then, a neighbourhood of $0$ is mapped biholomorphically onto $B(f^{n_Q}(0), \Delta \delta_0)$. Thus the distortion of $f^{n_Q}$ on $B_y$ is bounded by $2$. 
            Since $\diam(B_y)/2 = |\lambda_0|e^{-y} \geq |Df|$ on $Q$ and since $\delta_0 < \delta < 1/2$, it follows that 
            \begin{equation}\label{eq:df1plusj}
                |Df^{1+j}| < 1
            \end{equation}
                on $Q$ for $j = 0,\ldots, n_Q$. 
            Meanwhile, since $\delta_0 < \delta$, for $j = 0, \ldots, n_Q$ we have $f^j(B_y) \subset V \subset B(0,M)$, so the derivative at each step is bounded by $M$. 

            For $j < y/2 \log M$, 
            \begin{equation} \label{eq:Rover2}
                |Df^{j}| < e^{j \log M} < e^{y/2} \end{equation}
                on $B_y$. Thus  for $z \in Q$, for $j < y/2\log M$,
            \begin{equation} \label{eq:Rover4}
                \begin{split}
                    |D_\sigma f^{1+ j}(z)| & < \left({1+|z^2|}\right) |\lambda_0|e^{-y}e^{j \log M}  \\
                                           & < \left(1+|\re(z)|^4\right) |\lambda_0|e^{-y+ y/2}
                < (y+3\pi)^4 e^{-y/2} < e^{-y/3},
            \end{split}
            \end{equation}
                say. In particular, 
                for $z \in Q$ and $j = 1, \ldots, \lfloor y/2\log M \rfloor$, 
            \begin{equation} \label{eq:Rover5}
                \frac 1j \log |D_\sigma f^j(z)| < 0.
            \end{equation}
            This is our first estimate on the spherical derivative along the initial orbits of  points in $Q$. From \eqref{eq:df1plusj} we obtain, 
                for $z \in Q$ and $j = 1+ \lfloor y/2\log M \rfloor, \ldots, 1+n_Q$, 
            \begin{equation} \label{eq:Rover6}
                \frac 1j \log |D_\sigma f^j(z)| < \frac1j \log (1+|z|^2) < \frac{2 \log M}{y} \log(y+3\pi)^4 < y^{-1/2} \leq R^{-1/2}
            \end{equation}
            say. Combining \eqref{eq:Rover5} and \eqref{eq:Rover6} gives 
            \begin{equation} \label{eq:Rover7}
                \frac 1j \log |D_\sigma f^j(z)| < R^{-1/2}
            \end{equation}
            for all $z \in Q$ and $j = 1, \ldots, 1+n_Q$.

            Now we have to study what happens at times greater than $n_Q$. 
            By choice of $n_Q$,  we deduce $\diam(f^{n_Q}(B_y)) > \delta_0 /M$.
            Combined with \eqref{eq:Rover4}, it follows that $n_Q \geq y/2\log M$.
            It follows from the distortion bound that there is some $\nu_0 >0$, independent of $R, Q$, for which
            $m(f^{n_Q+1}(Q)) > \nu_0$. 
            Furthermore, 
             $f^{n_Q +1}(\partial Q)$ has length bounded by $10 \pi \delta_0 < 5\pi < 20$. 

             Let $x: =  y^{1/10}$.
             We claim that if $W_j, n_j$ for $j=1,2$ are such that $n_j$ is the first entry time of points in $W_j$ to $\cL(-2|\lambda_0|e^x)$, such that $f^{n_j}(W_j) \subset \cL(-e^{x+\sqrt{x}} +2\pi)$ and such that $f^{n_j}$ maps $W_j$ biholomorphically onto an element of $\cQ$, then $W_1$ and $W_2$ are pairwise disjoint. If $n_1 = n_2$, this is obvious since $\cQ$ is a partition. If $n_1 < n_2$, then $\diam(f^{n_1}(W_2)) < e^{-x}$ by Lemma~\ref{lem:Mdev2}, so $f^{n_1}(W_1) \cap f^{n_1}(W_2) = \emptyset$, by the first entry property, proving the claim.

            Set $A := f^{n_Q +1}(Q \setminus \partial Q)$, so $A$ is a simply-connected open set and, from above, $\partial A < 20$.  
            $C=20$ and 
            let $A_0 \subset A$ 
            be given by Proposition~\ref{lem:s0}, and for $z\in A_0$, let $k_0(z), k(z)$ be the numbers $n_0(z), n(z) \leq e^{3x}$ given by Proposition~\ref{lem:s0}. Then $k(z)$ is the first entry time of $z \in A_0$ to $\cL(-e^x)$ and $f^{k(z)}(z) \in \cL(-e^{x+\sqrt{x}})$. Let $W_z$ be the neighbourhood of $z$ mapped biholomorphically by $f^{k(z)}$ onto the element of $\cQ$ containing $f^{k(z)}(z)$, so $f^{k(z)}(W_z) \subset \cL(-e^{x+\sqrt{x}})$. Since $\dist(A_0, \partial A) \geq x^{-1/4}$, $W_z \subset A$. 
            
            By the claim, we obtain a cover of $A_0$ by a finite collection $\cW$ of pairwise-disjoint sets $W$ of the form $W_z, z \in A_0$. Extend the definition of $k_0, k$ to $z' \in W_z$ by $k_0(z') = k_0(z)$, $k(z') = k(z)$. Set $k_W = k(z)$ for $z \in W$. 
            On each $W$ the distortion of $f^j$ is bounded by $2$ for $j = 1, \ldots, k_W$ (as $P(f) \cap B(f^{k_W}(W), \Delta \diam(f^{k_W}(W)))  = \emptyset$). 
            Let us denote
            $$A' := \bigcup_{W \in \cW}  W.
            $$
            The measure estimate of Proposition~\ref{lem:s0} implies 
            \begin{equation} \label{eq:q111}
                m(A') > m(f^{n_Q+1}(Q)) - 1/\log x > m(f^{n_Q+1}(Q)) (1 - 1/ \nu_0 \log x).
            \end{equation}
            Let 
            $$
        Z_0 := Z \cap f^{-n_Q -1- n_Z}(A').
        $$
        The required partition of $Z_0$ is 
        $$\{Z \cap f^{-n_Q - 1-n_Z}(W) : W \in \cW\}.$$
        With the distortion of $f$ on $Q$ bounded by $e^{2\pi}$,  and distortion bounds of $2$ for $f^{n_Z}$ on $Z$ and for $f^{n_Q}$ on $f(Q)$, we derive from \eqref{eq:q111} that
        $$
        m(Z_0)/m(Z) >  1 - (4e^{2\pi})^2 / \nu_0 \log x > 1 - 1/ \log \log x^{10} \geq  1 - 1/\log \log R.$$

        Let $z \in Z_0$ and let $w := f^{n_Z}(z) \in Q$. Since $w \in Q$, in \eqref{eq:Rover7} we estimated $\frac1j \log |D_\sigma f^j((w))|$ for $j = 1, \ldots, n_Q + 1$, while $f^{n_Q +1}(w) \in B(0,M)$. Now we consider higher iterates. 
        For $j = n_Q +2, \ldots, 1 + n_Q + k_0(f^{1+n_Q}(w))$, we have the estimate 
        $|Df^j(w)| < 2e^{x^9}$
        coming from Proposition~\ref{lem:s0},  whence 
            \begin{equation} \label{eq:q211}
                \begin{split}
                    \frac1j \log |D_\sigma f^j(w)| &< \frac1{n_Q} \log ((1+M^2) 2e^{x^9}) < \frac{4 \log M }{ y} (y^{9/10} + \log(1+M^2)) \\
                                                   & < 5(\log M) y^{-1/10} \\
                    &< 1/\log R.
    \end{split}
            \end{equation}
            For $j =2 + n_Q + k_0(f^{1+n_Q}(w)), \ldots, 1+n_Q + k(f^{1+n_Q}(w))$, we have the estimate 
        $|Df^j(w)| < 2e^{x^9} |f^j(w)|^2$
        again coming from Proposition~\ref{lem:s0},  whence 
            \begin{equation} \label{eq:q311}
                \begin{split}
                    \frac1j \log |D_\sigma f^j(w)| &< \frac1{n_Q} \log \left(\frac{1+M^2}{1+ |f^j(w)|^2} 2e^{x^9} |f^j(w)|^2\right) \\
                &< \frac1{n_Q} \log ((1+M^2) 2e^{x^9}) \\
                    &< 1/\log R,
    \end{split}
            \end{equation}
            as before.
            
        Set $n(z) := n_Z + 1 + n_Q + k(f^{n_Z + 1+n_Q}((z)))$.  
            Combining \eqref{eq:Rover7}, \eqref{eq:q211} and \eqref{eq:q311} gives the required estimates on the spherical derivatives. 

         Once more from Proposition~\ref{lem:s0},   
         for $z \in Z_0$, 
         $$|f^{n(z)}(z)| < 2|\re( f^{n(z)}(z) )|^2, 
         $$
            and, since $e^x = e^{y^{1/10}} \geq H(R)$, 
         $$f^{n(z)}(z)  
            \in \cL(-e^{x}) \subset \cL(-H(R)),$$
            as required.
    \end{proof}
    \begin{lem} \label{lem:sphere}
        For almost every $z$ and the spherical metric $\sigma$, 
        $$
    \limsup_{n \to \infty} \frac{1}{n} \log |D_\sigma f^n(z)| = 0.$$
\end{lem}

\begin{proof}
            As before, by conservativity and ergodicity, we only need to show the result for a positive-measure set. 
            Let $R \gg 0$ and let $S \in \cQ$ with $S \subset \cL(-R)$. Let $E : t \mapsto e^t$. Repeatedly applying Lemma~\ref{lem:spherest}, in the limit we obtain a set $S_\infty$ for which
            \begin{equation*}
                \begin{split}
                    m(S_\infty)/m(S) & \geq \prod_{j = 0}^\infty \left(1 - \frac1{\log \log H^j(R)}\right) \\
                                     & \geq \prod_{j = 0}^\infty \left(1 - \frac1{\log \log H^{2j}(R)}\right)\left(1 - \frac1{\log \log H^{2j+1}(R)}\right) \\
                                     &> \prod_{j=0}^\infty \left(1 - \frac1{\log \log E^j(R)}\right)^2 \\
                                     &> 0,
                \end{split}
            \end{equation*}
            and for which, for each $z \in S_\infty$, 
            there is a strictly increasing sequence $n_j$, $j = 0, 1, \ldots$ such that
            $$
            \frac1k \log |D_\sigma f^k(f^{n_j}(z))| < \frac1{\log H^j(R)}$$
            for $k = 1, \ldots, n_{j+1} - n_j$. Consequently, for each $z$ in the positive-measure set $S_\infty$, 
        $$
    \limsup_{n \to \infty} \frac{1}{n} \log |D_\sigma f^n(z)| \leq 0.$$
\end{proof}

Theorems~\ref{thm:LE1} and~\ref{thm:LE2} follow immediately from Lemmas~\ref{lem:LEzero},~\ref{lem:LEinfty} and~\ref{lem:sphere}. 
\qed

        \section{Basic parametric estimates}
                We denote by $\log$ the principal branch of logarithm; it sends a neighbourhood of 1 in $\ccc$ to a neighbourhood of 0. In this section we commence our study of maps with parameters $\lambda$ in a neighbourhood of $\lambda_0$. 
            

        Let $z, \lambda_1, \lambda_2 \in \ccc$  and suppose $|\log(\lambda_1/\lambda_2)|$ is small. Let $g_i : z \mapsto \lambda_i e^z$ for $i=1,2$. 
        write $z_j := g_1^j(z)$ for $j \geq 0$. Suppose we have constructed $y_{k+1}, \ldots, y_n$ for some $0 \leq k < n$ and that 
        $1 - y_j/z_j$ is small for $j = k+1, \ldots, n$.
            We can formally set 
            \begin{equation}\label{eq:alf1}
            \alpha_j = \alpha_j(\lambda_1, \lambda_2, z) := \log(\lambda_1/\lambda_2) + \log(y_j/z_j) - (y_j-z_j)/z_j.
        \end{equation}
        While $|1 - y_j/z_j| < \frac12$, \eqref{eq:alf1} gives
            \begin{equation}\label{eq:alf3}
            |\alpha_j| < |\log (\lambda_1/\lambda_2)| + |(y_j - z_j)/z_j|^2.
        \end{equation}
            Set 
            \begin{equation}\label{eq:alf2}
            y_k := z_k + (y_{k+1} - z_{k+1})/z_{k+1} + \alpha_{k+1}, 
        \end{equation}
            so $g_2(y_k) = y_{k+1}$. 
            It follows that
            \begin{equation} \label{eq:pert}
            y_k - z_k = \frac{y_n-z_n}{Dg_1^{n-k}(z_k)} + \sum_{j=k+1}^{n} \frac{\alpha_j}{Dg_1^{j-k-1}(z_k)}.
        \end{equation}

        We shall use the above in Lemmas~\ref{lem:param2} and~\ref{lem:homoM}. The following proof just uses that $\lambda_1, \lambda_2$ are super-close and $n$ is not too big, while to prove Lemma~\ref{lem:homoM}, we use expansion to get summability in \eqref{eq:pert}.
        \begin{lem}\label{lem:param2}
            Let $x> 10$ and $c_0 \geq 1/e$.
            Let $\lambda_1, \lambda_2 \in \ccc \setminus \{0\}$  with $\beta := |\log(\lambda_1/\lambda_2)| < \exp(-9 c_0 e^{x})$, and let $g_i : z \mapsto \lambda_i e^z$ for $i=1,2$. 
            Let $n\leq e^{3x}$ and let $z = z_0 \in \ccc$. 
            Suppose that 
            $$
                    \inf_{j+k \leq n} |Dg_1^k(g_1^j(z))| > \exp(- 2c_0 e^{x}).$$
                Then there is a $y_0 = y(z, \lambda_1, \lambda_2, n)$ with 
                $g_2^n(y_0)=g_1^n(z)$ and, 
                for all $j \leq n$, 
                \begin{equation}\label{eq:g2g1}
                    |g_2^j(y_0) -g_1^j(z_0)| \leq \beta \exp(3 c_0 e^{x}) < \exp(-c_0 e^{x})
                \end{equation}
                Moreover, for all $j+k \leq n$, 
                \begin{equation}\label{eq:dg2g1}
                |\log Dg_2^k(g_2^j(y_0))/Dg_1^k(g_1^j(z_0))| <  \exp(-e^{x}).
                \end{equation}
            \end{lem}
        \begin{proof}
            The second inequality in \eqref{eq:g2g1} follows from the definition of $\beta$. 

            We commence by proving existence of $y_0$ satisfying \eqref{eq:g2g1} by induction on $n$. Write $z_j = g^j(z)$ for $j=0, \ldots, n$. So assume, for $j = 1, \ldots, n$, that there exists $y_j = y(z_j, \lambda_1, \lambda_2, n-j)$ satisfying $|y_j - z_j| \leq \beta \exp(3c_0 e^x)$ and, for $j = 1, \ldots, n-1$,  $g_2(y_j) = y_{j+1}$. 
            Existence of $y_n = z_n = y(z_n, \lambda_1, \lambda_2, 0)$ is trivial. 
            
            Define $y_0$ as per \eqref{eq:alf2}, so $g_2(y_0) = y_1$. 
            From \eqref{eq:pert} and the hypotheses on $n$ and the derivatives, one deduces for $k \geq 0$ that 
             $$|y_k - z_k| \leq e^{3x} \exp( 2c_0 e^{x}) \max_{j> k}|\alpha_j|.$$ For $k \geq 1$, $|z_k| > \exp(- 2c_0 e^{x})$ (by the derivative estimate), so
             \begin{equation}\label{eq:ykzkalph}
            |y_k - z_k|/|z_k| \leq e^{3x}\exp(4 c_0 e^{x}) \max_{j> k}|\alpha_j|.
        \end{equation}
        By \eqref{eq:ykzkalph} and \eqref{eq:alf3}, for $k \geq 1$, 
            $$
            |\alpha_k| < \beta + 3e^{6x}\exp(8 c_0 e^{x+1}) \max_{j>k}|\alpha_j|^2 < \beta + \beta^{-1} \max_{j>k}|\alpha_j|^2/4. 
            $$
            Now $|\alpha_n| = \beta$, so by induction it follows that $|\alpha_j| \leq 2\beta$ for $j =1, \ldots, n$. Hence $|y_0 - z_0| \leq 2\beta e^{3x} \exp(2c_0 e^{x}) \leq \beta\exp( 3c_0 e^{x})$. Thus $y_0$ satisfies \eqref{eq:g2g1}, completing the inductive argument. 

            To show \eqref{eq:dg2g1}, recall $|\alpha_l| \leq 2\beta$ and \eqref{eq:ykzkalph} and note that 
            \begin{equation*}
                \begin{split}
            \left|\log \frac{Dg_2^k(g^j_2(y_0))}{Dg_1^k(g^j_1(z_0))}\right| & = \left|\sum_{l=j+1}^{j+k} \log y_l/z_l \right| 
            \leq \sum_{l=j+1}^{j+k} 2\left| \frac{y_l - z_l}{z_l}\right| \\
            & \leq e^{3x} 4\beta e^{3x}\exp( 4c_0 e^{x}) \\ &< \exp(-e^x).
        \end{split}
    \end{equation*}
        \end{proof}
        Given a function $R: \ccc^2 \to \ccc$, for $j = 1, 2$ we let $D_jR(z_1,z_2)$ denote the partial derivative of $R$ with respect to the $j^{\mathrm{th}}$ variable, evaluated at the point $(z_1,z_2)$.
        \begin{lem} \label{lem:paraminv}
            Let $x> 10$.
            Let $B := \{\lambda \in \ccc : |\log (\lambda/\lambda_0)| <  \exp(-10 |\lambda_0| e^{x}) \}$. Suppose $U$ is a simply-connected open set. 
            Let  $n \leq e^{3x}$. Suppose for all $z \in U$ that 
            \begin{equation}\label{eq:jkc0x}
                    \inf_{j+k \leq n} |Df^j(f^k(z))| > 2\exp(- 2|\lambda_0| e^{x}).
                \end{equation}
                Then there is a holomorphic map $R : U \times B \to \ccc$ such that 
                \begin{equation}\label{eq:Rzl}
                    f^n_\lambda(R(z,\lambda)) = f^n(z)
                \end{equation}
                with 
                \begin{itemize}
            \item
                for $j =0, \ldots, n$, 
                \begin{equation} \label{eq:Riterates}
                    |f^j(z) - f^j_\lambda(R(z,\lambda))| < e^{-x};
                \end{equation}
                    \item
                $|D_1R(z,\lambda)| < \exp(-e^x);$ 
            \item
                $|D_2R(z,\lambda)| < \exp(4|\lambda_0| e^{x}).$ 
        \end{itemize}
            \end{lem}
        \begin{proof}
            Note that if $\lambda_1, \lambda_2 \in B$ then $|\log(\lambda_1/\lambda_2) | <2 \exp(-10 |\lambda_0| e^{x}) < \exp(-9|\lambda_0| e^{x})$. 
            With $c_0 = |\lambda_0|$, for each $z\in U$, $\lambda \in B$,  
            Lemma~\ref{lem:param2} spits out 
            a point $R(z,\lambda) := y(z, \lambda_0, \lambda, n)$ with $|f^j(z) - f^j_\lambda(R(z,\lambda))| < \exp(-|\lambda_0|e^x) < e^{-x}$ for $j=0,\ldots, n$. 
            We can immediately write $R(z,\lambda) = \phi_\lambda \circ f^n(z)$ where $\phi_\lambda$ is the appropriate inverse branch of $f^n_\lambda$, but it takes some work to show what appropriate is, and in particular that the branches vary continuously and so are well-defined.

            By \eqref{eq:g2g1}, 
            \begin{equation}\label{eq:fj1}
            |f^j_\lambda(R(z,\lambda)) - f^j(z)| < \exp(-|\lambda_0| e^{x}) < 1/2
        \end{equation}
            for $j=0, \ldots, n$. 
            Since $f_\lambda$ is univalent on each ball of radius $\pi$, $R(z,\lambda)$ is the unique point $z'$ with $f^n_\lambda(z') = f^n(z)$ for which $|f^j_\lambda(z') -f^j(z)| <1$ for all $j =0, \ldots, n$. 
            Now \eqref{eq:dg2g1} and \eqref{eq:jkc0x} imply
            $$
                    \inf_{j+k \leq n} |Df_\lambda^j(f_\lambda^k(R(z,\lambda))| > \exp(- 2|\lambda_0| e^{x}),
                    $$
                    so, for $\lambda' \in B$, we can apply Lemma~\ref{lem:param2} again to obtain  points
                    $y(R(z,\lambda), \lambda, \lambda',n)$. 
                    Again, for $j = 0, \ldots, n$,
                    $$|f^j_\lambda(R(z,\lambda)) - f^j_{\lambda'}(y(R(z,\lambda), \lambda, \lambda',n))| < \exp(-|\lambda_0| e^{x}) < 1/2$$
                    so with \eqref{eq:fj1}, the triangle inequality and uniqueness, one obtains $$y(R(z,\lambda), \lambda, \lambda',n) = R(z,\lambda').$$
                    The estimate \eqref{eq:g2g1} then implies that 
                    $$
                    |R(z,\lambda) - R(z,\lambda')| \leq |\log(\lambda/\lambda')| \exp(3|\lambda_0| e^{x})$$
                    so $R(z,\cdot)$ is continuous, with Lipschitz bound $\exp(4|\lambda_0| e^{x})$, say. Therefore the `appropriate' inverse branches $\phi_\lambda$ vary holomorphically, and $R(z,\cdot)$ is holomorphic with $|D_2 R(z,\lambda)| \leq \exp(4|\lambda_0| e^{x})$.

                    Differentiating \eqref{eq:Rzl} gives  $D_1R(z,\lambda) = Df^n(z) / Df^n_\lambda(R(z,\lambda))$, so \eqref{eq:dg2g1} implies
                    $$|\log D_1R(z,\lambda)| < \exp(-e^x),$$
                    and holomorphicity of $R$, as required. 
        \end{proof}

        The following lemma concerning existence of the holomorphic motion $h$ is well-known. We include the elementary proof for completeness, and because it gives the Lipschitz-type constant $M_0$ without invoking $\lambda$-lemmas. 
        \begin{lem} \label{lem:homoM}
            There exists $r_0, M_0 > 0$ and a function $h : P(f) \times B(\lambda_0, r_0)$ for which the following hold. For each $z \in P(f)$ and for $\lambda \in B(\lambda_0,r_0)$, $\lambda \mapsto h(z,\lambda)$ is holomorphic, while $z \mapsto h(z,\lambda)$ is injective, and  $|h(z, \lambda) - z| \leq M_0 |\lambda - \lambda_0|$. For such $z,\lambda$ and all $n \geq 0$, 
            \begin{equation}\label{eq:hrel}
                f^n_\lambda ( h(z,\lambda)) = h( f^n (z), \lambda).
            \end{equation}
        \end{lem}
        \begin{proof}
            Note that if \eqref{eq:hrel} holds with $n=1$ then it holds for all $n\geq 0$. 

            Since $P(f)$ is a compact, forward-invariant, hyperbolic repelling set, there is a  constant $M_1> 1$ such that $\sum_{j\geq 1} |Df^j(z)|^{-1} < M_1$ for all $z \in P(f)$, and there is an $\eta \in (0,1)$ such that $B(0,\eta) \cap P(f) = \{0\}$. 
            Choose $r_0>0$ such that,
for all $\lambda \in B(\lambda_0, r_0)$, 
            $$ r_\lambda : = \max(|\log(\lambda/\lambda_0)|, |\lambda - \lambda_0|) <  \eta^2/4M_1^2.$$ 

            As an intermediate step, we shall inductively construct functions $h_n$ which shall converge to $h$. 
            Let $h_0 : (z,\lambda) \mapsto z$ and suppose for $j = 1, \ldots, n-1$ we have functions $h_j : P(f) \times B(\lambda_0, r_0) \to \ccc$ such that, for all $(z, \lambda) \in P(f) \times B(\lambda_0,r_0)$, 
            \begin{itemize}
                \item
            $h_{j-1}(f(z),\lambda) = f_\lambda(h_j(z, \lambda))$; 
                \item
            $|h_j(z, \lambda) - z | \leq 2 M_1 r_\lambda $. 
            \end{itemize}

            Then for each such pair $(z,\lambda)$ we have the sequences $z = z_0, z_1 = f(z), \ldots, z_n = f^n(z)$ and $y_1 = h_{n-1}(z_1, \lambda), \ldots, y_n = z_n$ and the corresponding sequence of $\alpha_j = \alpha_j(\lambda_0, \lambda, z)$ as defined in \eqref{eq:alf1}. Then define $y_0$ by \eqref{eq:alf2}, whence $f_\lambda(y_0) = y_1$.  
            For $j \geq 1$, by supposition, $|y_j - z_j| \leq 2M_1 r_\lambda$, while $z_j \in P(f) \setminus \{0\}$  so $|z_j|\geq\eta$. 
In particular, $|(y_j - z_j)/z_j| \leq 2 M_1 r_\lambda/\eta.$ Inserting this estimate into \eqref{eq:alf3}, we obtain
            $$|\alpha_j| \leq |\log(\lambda/\lambda_0)|  + 4M_1^2 r_\lambda^2/\eta^2 \leq 2 r_\lambda.$$  
            By (\ref{eq:pert}) and the definition of $M_1$, we deduce that $|y_0 - z_0| \leq 2 M_1r_\lambda.$ 
            Define $h_n(z,\lambda) := y_0$. Then 
            \begin{equation} \label{eq:homo}
                h_{n-1}(f(z),\lambda) = f_\lambda(h_n(z,\lambda)) \text{ and }
            |h_n(z, \lambda) - z | \leq 2 M_1 r_\lambda.
        \end{equation}
            To conclude the inductive construction of  $h_n$, note that a $h_1$ clearly exists satisfying the required properties. 
            Thus \eqref{eq:homo} holds for each $n$.

            Consequently $|h_{n-1}(f(z),\lambda) - f(z)| \leq 2 M_1 r_\lambda < \eta$, while $|f(z)| \geq \eta$, so $h_{n-1}(f(z),\lambda) \ne 0$ and 
            $$
            \lambda \mapsto h_n(z,\lambda) = f_\lambda^{-1}(h_{n-1}(f(z),\lambda))$$
            is well-defined and holomorphic, upon choosing the appropriate branch of $f_\lambda^{-1}$. 

            Since the $h_n(z,\cdot)$ are uniformly bounded, we can extract a convergent subsequence with holomorphic limit $h(z, \cdot)$ with the same Lipschitz bound $|h(z, \lambda) - z| \leq 2M_1r_\lambda$. One can take $M_0 := 2M_1$. The map $h$ satisfies \eqref{eq:hrel} for $n=1$ and thus for all $n$. 
            We claim that, for given $\lambda$,  $h(z, \lambda)$ is the unique point $z_\lambda$ such that $|f^n_\lambda(z_\lambda) - f^n(z)| < \delta$ for all $n \geq 0$. Now $f^{n_0}_\lambda$ is uniformly expanding on $B(f^n(z), 3\delta)$ for each $n$. Therefore there is only one point, $z'$,  for which $f^n_\lambda(z') \in B(f^n_\lambda(z_\lambda), 2\delta)$ for all $n\geq 0$ and $z'= z_\lambda$, proving the claim.  Therefore the map $h$ is unique and $z \mapsto h(z,\lambda)$ is injective. 
        \end{proof}

        \section{Parameter space to phase space near $P(f)$}
The following lemma  is another form of the standard Koebe distortion lemma.
	\begin{lem}\label{lem:K2}
	Given $\eps' >0$ there is a $\delta' >0$ such that if $g$ is any univalent function on the unit disc, one can write
	$$Dg(z) = Dg(0)[1 + \theta(z)],$$
	where $\theta$ is a holomorphic function on $B(0,\delta')$ with $|\theta| < \eps'$. 
	\end{lem}
	\begin{proof}
		The distortion of $g$ is bounded by $2$ on $B(0, 1/\Delta)$, so $|Dg(z)| \leq 2|Dg(0)|$ on that ball. By Cauchy's integral formula, $|D^2g| \leq 4\Delta |Dg(0)|$ on $B(0, 1/2\Delta)$. 
	Integrating gives $|Dg(z) -Dg(0)| \leq 4|z|\Delta |Dg(0)|,$ on $B(0, 1/2\Delta)$. Taking $\delta' = \eps' /4\Delta$,  the result follows.
	\end{proof}

	The ideas in this section are not especially new, though the exposition and the formulation of results are. The reader may wish to compare this section with  \cite[Sections~3,~4]{Bad:Rare} and \cite[Section~3]{Asp:Rare}. The useful result is Lemma~\ref{lem:xiinject}; it follows easily from the following proposition. 

        Recall the definitions of Section~\ref{sec:glob}. Let $h, M_0, r_0$ be given by Lemma~\ref{lem:homoM}. 
        Now $h(0, \cdot)$ is a holomorphic function of $\lambda$. \emph{A priori} it could be identically zero, however Misiurewicz maps are not structurally stable (\cite{Makienko:Exp, UZ:ExpInstability}), so $h\not \equiv 0$, see \cite[Lemma~2.1]{Bad:Rare}. 
            Therefore, there exist an integer $K \geq 1$ and a non-zero constant $a_K$ such that $h(0,\lambda) = a_K(\lambda-\lambda_0)^K + \text{higher order terms}$. 
            Thus given  $\eps_1 \in (0,1)$, there is an $r(\eps_1) >0$ for which we can write
            \begin{equation} \label{eq:hlam}
            h(0, \lambda) = a_K(\lambda-\lambda_0)^K[1+ \theta_0(\lambda)],
        \end{equation}
            where $\theta_0$ is holomorphic on $B(\lambda_0, r(\eps_1))$ with norm bounded by $\eps_1$.
            In particular, for $\lambda \in B(\lambda_0, r(\eps_1))$, 
            \begin{equation} \label{eq:hexp}
        \left|a_K(\lambda -\lambda_0)^{K}\right|/2 \leq 
            \left|h(0,\lambda)\right| \leq 2\left|a_K(\lambda -\lambda_0)^{K}\right|.
        \end{equation}

        For $n\geq0$, let us denote by $\xi_n$ the holomorphic map defined by
        $$
        \xi_n(\lambda) = f^n_\lambda(0).$$
        \begin{prop} \label{prop:xigrows}
            Given $\eps > 0$, there exist constants $\delta_1, r_3, C_0, C_1 >0$ such that, for all $r \in (0, r_3)$, the following holds.  
        Let $n = n(r, \delta_1)$ be maximal such that 
            \begin{equation} \label{eq:ndef}
        f_\lambda^j(B(0, 2|h(0,\lambda)| )) \subset B(f^j(0), \delta_1) \subset V
        \end{equation}
        for $j = 0, \ldots, n$ and all $\lambda \in B(\lambda_0, 2r)$. 

        Then $\xi_n(B(\lambda_0, 2r)) \subset B(f^n(0), \delta_1)$, 
        $$
        D\xi_n(\lambda) = -Df^n(0) Ka_K(\lambda - \lambda_0)^{K-1} \left[1+\theta_5(\lambda)\right],
        $$
        where $\theta_5$ is a holomorphic function on the annulus $A(\lambda_0; r/4, r)$ with  $|\theta_5| < \eps$,  and
        $$
        1/C_0 r < |D\xi_n(\lambda)| < C_0/r.$$
             Moreover, $|Df^n_\lambda(0)| \leq C_1/r^K$ for all $\lambda \in B(\lambda_0, r)$. 
    \end{prop}
    \begin{proof}
        Taking $\delta_1 < \delta$, $B(f^j(0), \delta_1) \subset V$. From \eqref{eq:ndef}, 
        the statment $\xi_n(B(\lambda_0, 2r)) \subset B(f^n(0), \delta_1)$ is trivial.
        We shall expend much effort to compare $Df_\lambda^n(z), Df^n_\lambda(0)$ and $Df^n(0)$. 

        Assume $\eps \in (0,1)$ and set $\eps_1 = \eps/16$. 
            Let $\delta'$ be given by Lemma~\ref{lem:K2} for $\eps' = \eps_1$  and let 
         $\delta_1 \in (0,\min(\delta_0\delta', \delta)/2)$ satisfy 
            \begin{equation} \label{eq:deldef}
         \delta_1 e^{Mn_0 +  \alpha } \sum_{k \geq 0} e^{-k\alpha/n_0} < \eps_1/8.  
        \end{equation}
        Let $r_1$ be the number $r(\eps_1)>0$ for which \eqref{eq:hlam} holds.
         Let $r$ satisfy
            \begin{equation} \label{eq:rdef}
        0 < r <\min(\eps_0, r_0, r_1,  \delta_1/M_0)/2 
        \end{equation}
        and let $n = n(r, \delta_1)$ be 
        given by \eqref{eq:ndef}.

        By \eqref{eq:ndef} and choice of $\delta_1$, 
        \begin{equation} \label{eq:qwe}
	f_\lambda^n(B(0, 2|h(0,\lambda)|)) \subset B(f_\lambda^n(0), 2\delta_1)  \subset B(f^n_\lambda(0), \delta_0).
	\end{equation}
        By Lemma~\ref{lem:Vballs}, 
        a neighbourhood of $0$ is mapped biholomorphically by $f_\lambda^n$ onto $B(f_\lambda^n(0), \delta_0)$. By \eqref{eq:qwe}, this neighbourhood necessarily contains  
        $B(0, 2|h_\lambda(0,\lambda)|)$, and 
        $\delta_0/2\delta_1 \geq \delta'$ plus choice of $\delta'$ then implies 
            \begin{equation} \label{eq:reps}
                Df^n_\lambda (z) = Df^n_\lambda(0) [1 + \gamma(z, \lambda)],
        \end{equation}
        where $\gamma(\cdot, \lambda)$ is a holomorphic function on $B(0, 2|h(0,\lambda)|)$  bounded by $\eps_1$, and this for each $\lambda \in B(\lambda_0,2r)$.

        Integrating along a ray from $0$ to $z$, we obtain
            \begin{equation} \label{eq:reps2}
                f^n_\lambda (z) = f^n_\lambda(0) + z Df^n_\lambda(0)[1 +  \gamma_1(z,\lambda)],
        \end{equation}
                where $\gamma_1(z,\lambda) := \frac1z \int_0^z \gamma(w, \lambda)$,
        with $\left| \gamma_1(z,\lambda) \right| \leq \eps_1$. 

        Applying  \eqref{eq:reps2}, with $z = h(0,\lambda)$, gives
            \begin{equation} \label{eq:reps2bis}
         f^n_\lambda(0) - f^n_\lambda(h(0,\lambda)) = -Df^n_\lambda(0) h(0,\lambda) \left[1+ \theta(\lambda) \right],
        \end{equation}
        where $\theta$ is the holomorphic function $\lambda \mapsto \theta(\lambda) := \gamma_1(h(0,\lambda), \lambda)$ with norm bounded by $\eps_1$. 

    Now we wish to compare $Df^n_\lambda$ with $Df^n$ at $0$. First we show $n$ is not too large.

    By \eqref{eq:hexp}, there is a $\lambda_1 \in B(\lambda_0, 2r)$ for which $|h(0,\lambda_1)| > |a_K| r^K$. Since $|Df^{n_0}|>\exp(\alpha)$ on $V$, if $kn_0 \leq n$ then 
    $$
    B(f^{kn_0}(0), 2\delta_1) \supset f^{kn_0}(B(0, 2|h(0,\lambda_1)|)) \supset B(f^{kn_0}(0), e^{k\alpha} a_Kr^K) .
    $$
    Thus ${\alpha n/n_0} \leq \log (2\delta_1 r^{-K}/|a_K|)$. In particular, there exists a $c_0 >0$  for which
    $$ n = n(r, \delta_1) < - c_0 \log r.$$
    This implies that $rn(r,\delta_1) \to 0$ as $r \to 0$. 
    
            Recall $|\lambda_0|\geq \frac1e$ and $V \subset B(0,M-2)$, so $|Df_\lambda| \geq e^{-M}$ on $V$ for all $\lambda \in B(\lambda_0, 1/2e)$. 
            By the same Koebe distortion bound that gave \eqref{eq:reps}, and the estimates 
            $$|Df_\lambda^k(f_\lambda^j(0))| \geq e^{-n_0M} \exp(\lfloor k/n_0 \rfloor \alpha)$$
            for $k+j = n$, we deduce that the images of $B(0,2|h(0,\lambda)|)$ under $f^j$ are exponentially small in $n-j$: 
            \begin{equation} \label{eq:rdiam}
            \diam(f^j(B(0,2|h(0,\lambda)|))) \leq 2 \delta_1 e^{n_0M + \alpha}e^{(j-n)\alpha/n_0}.
        \end{equation}
        Meanwhile, by definition of $h$, for all $j \geq 0$, 
        $$h(f^j(0),\lambda) = f^j_\lambda(h(0,\lambda)) \in f^j_\lambda(B(0, 2|h(0,\lambda)|)),$$ 
        while $|h(z,\lambda) - z| \leq M_0|\lambda - \lambda_0|.$
        Hence 
            \begin{equation} \label{eq:hdiam}
        \dist(f^j(0), f_\lambda^j(B(0,2|h(0,\lambda)|))) 
        \leq M_0|\lambda - \lambda_0|.
        \end{equation}

        For $j \leq n$, combining \eqref{eq:hdiam} and \eqref{eq:rdiam} gives
        $$|f^j(0) - f^j_\lambda(0)| 
        \leq M_0 |\lambda - \lambda_0| + 2 \delta_1 e^{n_0M + \alpha}e^{(j-n)\alpha/n_0}.$$
        As an exponential map, $Df(y)/Df(y') = e^{y-y'}$. 
        By \eqref{eq:deldef}, there is a uniform bound  
        \begin{equation} \label{eq:fjfj}
            \sum_{j=0}^{n-1} |\log |Df(f^j(0))/Df(f^j_\lambda(0))|| \leq \sum_{j=0}^{n-1} 
        |f^j(0) - f^j_\lambda(0)| 
            <2  M_0 r n 
        + \eps_1/4,
    \end{equation}
        while $Df/Df_\lambda = \lambda_0/\lambda$. 
        Thus for $k \leq n$, 
        \begin{equation} \label{eq:fkfk}
            \begin{split}
        |\log |Df^k(0)/Df^k_\lambda(0)|| &< 2 M_0 rn +  \eps_1/4 + |n \log (\lambda_0/\lambda) | \\
        &< 2M_0 rn +  \eps_1/4+ 2 n |\lambda - \lambda_0|/|\lambda_0|. \\
    \end{split}
\end{equation}
        But from above, $rn \to 0$. Thus if  $r$ is sufficiently small, 
        $$
        |\log |Df^n(0)/Df^n_\lambda(0)|| < \eps_1/2$$
        so
        \begin{equation} \label{eq:thetal}
            Df^n_\lambda(0) = Df^n(0)[1 + \theta_1(\lambda)]
        \end{equation}
        with $\theta_1$  a holomorphic function on $B(\lambda_0, 2r)$ with norm bounded by $\eps_1$. 
        With \eqref{eq:reps2bis}, we obtain
            \begin{equation} \label{eq:reps3bis}
         f^n_\lambda(0) - f^n_\lambda(h(0,\lambda)) = -Df^n(0) h(0,\lambda) \left[1+ \theta_2( \lambda) \right],
        \end{equation}
        where $\theta_2 = (1+\theta)(1+\theta_1)$ is a holomorphic function with norm bounded by $3\eps_1$. 

        Using \eqref{eq:hlam}, we can substitute in for $h$ to obtain
            \begin{equation} \label{eq:reps9}
         f^n_\lambda(0) - f^n_\lambda(h(0,\lambda)) = -Df^n(0) a_K (\lambda-\lambda_0)^K \left[1+ \theta_3(\lambda) \right],
        \end{equation}
        where $\theta_3 : = (1+\theta_2)(1+\theta_0)$ is holomorphic with norm bounded by $5\eps_1$ on $B(\lambda_0, 2r)$. 
        By Cauchy's integral formula,  $|D\theta_3(\lambda)| < 10 \eps_1/r$ on $B(\lambda_0, r)$, whence $|\lambda -\lambda_0| |D\theta_3(\lambda)| < 10\eps_1$. 
        Thus, on $B(\lambda_0,r)$, the derivative of \eqref{eq:reps9} can be written
            \begin{equation} \label{eq:reps8}
                -Df^n(0) K a_K (\lambda-\lambda_0)^{K-1} \left[1+ \theta_4(\lambda) \right],
        \end{equation}
        where $1 + \theta_4(\lambda) := (1+\theta_3(\lambda)) +(\lambda-\lambda_0) D\theta_3(\lambda)/K$, so $|\theta_4(\lambda)| < 15\eps_1.$

        Now we have all the distortion-like estimates we need, let us estimate the size of the derivative. 
        By maximality of $n$, there exists $\lambda_1 \in B(\lambda_0, 2r)$ for which 
        $f_{\lambda_1}^{n+1}(B(0, 2|h(0,\lambda_1) | )) \not\subset B(f^{n+1}(0), \delta_1)$, which, combined with \eqref{eq:hdiam} implies
            \begin{equation} \label{eq:hdiam2}
                \diam(f_{\lambda_1}^{n+1}(B(0,2|h(0,\lambda_1) |))) 
        \geq \delta_1 - M_0|\lambda_1 - \lambda_0| > \delta_1 - M_0r > \delta_1/2. 
        \end{equation}
        The derivative is bounded by $M$ on $V$, so \eqref{eq:hdiam2} implies
            \begin{equation} \label{eq:hdiam3}
                \diam(f_{\lambda_1}^{n}(B(0,2|h(0,\lambda_1) |))) 
        \geq \delta_1 /2M.
        \end{equation}
        Therefore, for some $z \in B(0,2|h(0,\lambda_1 )|)$,
            \begin{equation} \label{eq:hdiam4}
                |Df_{\lambda_1}^{n}(z)| 
                \geq \frac{\delta_1}{ 4M |h(0,\lambda_1) | }.
        \end{equation}
        The bounds \eqref{eq:thetal} and \eqref{eq:reps} give good distortion control, combining to give $Df^n_\lambda(z) = Df^n(0) [(1+\theta(\lambda)) (1 + \gamma(z, \lambda))]$, so \eqref{eq:hdiam4}
        implies $$|Df^n(0)| |h(0,\lambda_1)| > \delta_1/8M,$$
        in turn implying, via \eqref{eq:hexp},
            \begin{equation} \label{eq:dfnbound}
        |Df^n(0)| |a_K| (2r)^K > \delta_1/16M.
        \end{equation}
        If $|\lambda-\lambda_0| \geq r/4$ then 
            \begin{equation} \label{eq:lrbound}
        \frac{|\lambda-\lambda_0|^{K-1}}{r^K} \geq \frac1{4^{K-1}r}. 
        \end{equation}
        From \eqref{eq:dfnbound} and \eqref{eq:lrbound}, we  deduce that, on the annulus $A(\lambda_0; r/4, 2r)$,  
            \begin{equation} \label{eq:dfnbd2}
                |Df^n(0)| K|a_K||\lambda - \lambda_0|^{K-1}|1+\theta_4(\lambda)|   > K  \delta_1/2^{8K}M r.
        \end{equation}

        Now $\xi_n(\lambda) = f^n_\lambda(0)$, so adding and subtracting the same term, 
        $$
        \xi_n(\lambda) = f^n_\lambda(0) - f^n_\lambda(h(0,\lambda))  + h(f^n(0), \lambda) ,$$
        and  \eqref{eq:reps9} gives, on $B(\lambda_0, 2r)$, 
            \begin{equation} \label{eq:xi1}
        \xi_n(\lambda) =  -Df^n(0) a_K(\lambda-\lambda_0)^K \left[1+ \theta_3(\lambda) \right] + h(f^n(0), \lambda).
        \end{equation}
       Let $D_2h$ denote the partial derivative of $h$ with respect to the second variable.
       Taking the derivative on both sides of \eqref{eq:xi1}, and using \eqref{eq:reps8}, 
            \begin{equation} \label{eq:xi2}
                    D\xi_n(\lambda) =  Df^n(0) K a_K(\lambda-\lambda_0)^{K-1}\left[1+\theta_4(\lambda) \right] 
        + D_2h(f^n(0), \lambda).
        \end{equation}
        Now  $|h(z,\lambda) -z| \leq M_0 |\lambda - \lambda_0|$ for $z \in P(f)$ and $\lambda \in B(\lambda_0, r_0)$, so by Cauchy's integral formula, $|D_2 h(z, \lambda)| \leq 2M_0$ on $B(\lambda_0, r)$. 
        Therefore, if $r$ is small enough the bound \eqref{eq:dfnbd2} together with \eqref{eq:xi2} entails that
            \begin{equation} \label{eq:xi4}
        D\xi_n(\lambda) = -Df^n(0) Ka_K(\lambda - \lambda_0)^{K-1} \left[1+\theta_5(\lambda)\right],
        \end{equation}
        where $\theta_5$ is a holomorphic function on $A(\lambda_0; r/4, r)$ with norm bounded by $16\eps_1$. Setting $C_0 := 2^{9K}M/K \delta_1$, taking absolute values of \eqref{eq:xi4} and using \eqref{eq:dfnbound}, we obtain
        $$
        |D\xi_n(\lambda)| > 1/C_0 r.$$

        It remains to provide the upper bound for $|Df^n_\lambda(0)|$. This follows simply from \eqref{eq:dfnbound} and \eqref{eq:thetal}.
    \end{proof}

        \bigskip

        \begin{lem} \label{lem:inject}
            Let $g$ be a holomorphic map defined on an open convex set $U$. Suppose $\re(Dg(z)) > 0$ for all $z\in U$. Then $g$ is injective. 
        \end{lem}
        \begin{proof}
            Integrating $Dg$ along a line from $z_1$ to $z_2$ in $U$, one cannot obtain $0$. 
        \end{proof}

         Given an annulus 
         $A(y; a_1,a_2)$   
         and $k\geq 2$, the $k$ rays leaving $y$ with angles $2j\pi /k$ for $j \leq k$ divide $A(y;a_1,a_2)$ into $k$ (open) congruent pieces which we will call \emph{$k$-sectors} of $A(y;a_1,a_2)$.

         \begin{lem} \label{lem:xiinject}
             Given $\eps' >0$, there exists $r_3, \gamma \in (0,1)$ and $\nu_0, C, C_0, C_1>0$ such that for all $r \in (0, r_3)$, the following holds. 
             There exists $n \geq 1$ such that $\xi_n$ maps each $4K$-sector $W$ of $A(\lambda_0; \gamma r, r)$ injectively onto a simply-connected, open set $\xi_n(W)$ with $m(\xi_n(W) ) > \nu_0$ and the length of $\partial \xi_n(W)$ bounded by $C$. For $j \leq n$, $\xi_j(W) \in V$. 
             
             For $\lambda,\lambda' \in W$,
             $$
             1/C_0 r \leq |D\xi_n(\lambda)| 
             $$
             and 
             $$
             \left| \frac{D\xi_n(\lambda)}{D\xi_n(\lambda')} \right| < 1+ \eps'.$$ 

             Moreover, $|Df^n_\lambda(0)| \leq C_1/r^K$ for all $\lambda \in B(\lambda_0, r)$. 
         \end{lem}
         \begin{proof}
             Let $\gamma <1$ satisfy $\gamma^K > 1-\eps'/3$. 
             With $\eps = \eps'/3$, let $\delta_1, r_3, C_0, C_1, \theta_5$ be given by Proposition~\ref{prop:xigrows}, let $r \in (0,r_3)$ and let $n$ be defined as per Proposition~\ref{prop:xigrows}. Then $\xi_j(B(\lambda_0, r)) \subset V$ for $j \leq n$.  
             
             Let $\gamma \in (\frac12, 1)$ and let $W$ be a $4K$-sector of $A(\lambda_0; \gamma r, r)$. Let $\widehat W$ denote the convex hull of $W$, so $\widehat W$ is contained in a $4K$-sector $W'$ of $A(\lambda_0; r/4, r)$. Now 
             $$\{(\lambda - \lambda_0)^{K-1} : \lambda \in W'\}$$ 
             lies (strictly) in a quadrant of the plane. Since $|\theta_5|< |\eps| < 1/\sqrt{2}$ on $A(\lambda_0; r/4, r)$, 
             $$\{ 1+ \theta_5(\lambda) : \lambda \in W'\}$$
             is also a subset of a quadrant. Thus 
             \begin{equation}\label{eq:lll}
        D\xi_n(\lambda) = -Df^n(0) Ka_K(\lambda - \lambda_0)^{K-1} \left[1+\theta_5(\lambda)\right]
    \end{equation}
            lies in a fixed half-plane for all $\lambda \in \widehat W$. By Lemma~\ref{lem:inject}, $\xi_n$ is injective on $\widehat W$ and thus is injective on $W$. 

            The derivative estimate $|D\xi_n(\lambda)| > 1/ C_0 r$ on $W$ implies the image has measure at least $\nu_0 $, for some $\nu_0 >0$ depending on $\gamma$ but not on $r$. Injectivity and bounded distortion  give an upper bound on $r |D\xi_n|$, since the measure of $V$ is bounded.  The length of $\partial W$ is bounded by a constant times $r$, so the upper bound on $r|D\xi_n|$ implies that the length of $\partial \xi_n(W)$ is bounded by a constant $C>0$. 

            The distortion estimate follows from $\eqref{eq:lll}$, as choice of $\gamma$ and the bound $|\theta_5| < \eps'/3$ give 
            $$
             \left| \frac{D\xi_n(\lambda)}{D\xi_n(\lambda')} \right| <  
            1/(1-\eps'/3)^2 < 1+\eps'.$$ 

            The derivative estimate of $|Df^n_\lambda(0)| \leq C_1/r^K$ comes directly from Proposition~\ref{prop:xigrows}.
        \end{proof}

        \section{Parameter dependence at the large scale}

        Lemma~\ref{lem:paraminv} allows us to show that some sets which get mapped eventually onto a square far out to the left do not move very fast as the parameter $\lambda$ varies, so if $\lambda$ does not vary much, the intersection remains large. Later on we will show that for relatively large sets of parameters, the orbit of 0 under $f_\lambda$ lands in one of these intersections. 

            \begin{lem} \label{lem:overlaps}
                Let $C, \nu_0>0$. 
                There is an $M_2>0$ such that for $x>M_2$, the following holds. 
            Suppose $A \subset B(P(f), 1)$ is a simply-connected open set satisfying $m(A) > \nu_0$ and with $\partial A$ having length at most $C$. 
            Let $B :=\{ \lambda : |\log(\lambda_0/\lambda)| < \exp(-10|\lambda_0|e^{x})\}$.  There is a collection $\{U_l\}_{l=1}^L$ of pairwise-disjoint subsets of $A$  and numbers $n_l$, together with a map $R : \bigcup_l U_l \times B \to A \setminus B(\partial A, e^{-x})$ such that
            \begin{itemize}
                 \item
                    $m(\bigcup_l U_l)/m(A) \geq  1 - 1/\log \log x$;
                \item 
                    $R(z,\lambda_0) = z$;
                \item
                    on each $U_l \times B$, 
                     $R$ is holomorphic,  $|\log D_1R| < \exp(-e^x)$ and  $|D_2R| < \exp(4 |\lambda_0| e^{x})$;
                \item
		for $z \in R(U_l, \lambda)$, 
                    $$|Df_\lambda^{n_l}(z)|< 3e^{x^9}|\Re(f^{n_l}(z))|^4;$$
                \item
                    for each $\lambda$, the sets $R(U_l, \lambda)$ for $l =1, \ldots, L$ are pairwise-disjoint;
		    \item
		for $z \in R(U_l, \lambda)$, a neighbourhood $V_z$ of $z$ with diameter bounded by $e^{-x}$ gets mapped biholomorphically by $f^{n_l}_\lambda$ onto 
		$$B(f^{n_l}_\lambda(z), 1) \subset \cL(-e^{x+\sqrt{x}} + 3\pi).$$
                \end{itemize}
            \end{lem}
            \begin{proof}
                Given $C, \nu_0 >0$, let $x\gg 0$ be large enough to apply 
             Proposition~\ref{lem:s0}. 
             Let $A_0 \subset A \setminus B(\partial A, x^{-1/4})$ and $n(z)$ for $z \in A_0$ be given by Proposition~\ref{lem:s0}, so $m(A\setminus A_0)  \leq 1/\log x$ and $n(z) \leq e^{3x}$. 

                For $z \in A_0$, let $Q_z$ be the element of $\cQ$ containing $f^{n(z)}(z)$. 
                Let $U_z$ be the neighbourhood of $z$ mapped biholomorphically by $f^{n(z)}$ onto $Q_z$. 
                Clearly $Q_z \subset \cL(-e^{x +\sqrt{x}} + 2\pi)$, and for $j< n(z)$, the diameter of $f^j(U_z)$ is bounded by $e^{-x}$ (see Lemma~\ref{lem:Mdev2} to treat $j \leq n(z)-2$, while $|Df| \geq e^{x +\sqrt{x}}$ on $f^{n(z)-1}(U_z)$). 
                Since $n(z)$ is also the first entry time of $z$ to $\cL(-2|\lambda_0|e^x)$, $f^j(U_z) \subset \cR(-2|\lambda_0|e^x -1)$ for $j < n(z)$. It follows that if $z' \in A_0$ and $U_z  \cap U_{z'} \ne \emptyset$  then $n(z) = n(z')$ and $U_z = U_{z'}$. 
                Thus the neighbourhoods $U_z$, for $z \in A_0$,  form a finite (since $n(z)$ is bounded), pairwise-disjoint collection which we can write as $\{U_l\}_{l=1}^L$, setting $n_l := n(z)$ for some  $z \in U_l \cap A_0$. The collection is a cover of $A_0$ and thus has measure at least $m(A) - 1/\log x$. Since $m(A) > \nu_0$, for large $x$ we obtain the required measure estimate. 
		
		We can write $Q_l = f^{n_l}(U_l) \in \cQ$. 
                 Let $\widehat{U}_l \supset U_l$ denote the set containing $U_l$ mapped biholomorphically by $f^{n_l}$ onto $B(Q_l, 1)\subset \cL(-e^{x+\sqrt{x}} + 3\pi).$  
		Applying Lemma~\ref{lem:distnQ}, 
                the distortion of $f^{n_l}$ is bounded by $2$ on each $\widehat{U}_l$, and since $A_0 \cap \widehat{U}_l \ne \emptyset$, the estimates of Proposition~\ref{lem:s0} imply that for $z \in \widehat{U}_l$, 
            \begin{equation} \label{eq:dful}
                |Df^{n_l}(z)| < 2e^{x^9} \sup_{y\in \widehat{U}_l} |\Re(f^{n_l}(y))|^4 < 2 e^{x^9}  |\Re(f^{n_l}(z)) +2\pi|^4 
            \end{equation}
	    and
	    $$
	    \inf_{j+k \leq n_l} |Df^j(f^k(z))|  > 2\exp(-2|\lambda_0| e^x).$$
                 We can therefore apply  Lemma~\ref{lem:paraminv} to obtain a holomorphic map
            $R_l : \widehat{U}_l \times B \to \ccc$, 
            where $R_l(z, \lambda_0) = z$ and, for $(z,\lambda) \in \widehat{U}_l\times B$,  
            $$
            f_\lambda^{-n_l} \circ R_l (z, \lambda) = f^{n_l}(z).
            $$
            By Lemma~\ref{lem:paraminv},  $|\log D_1R_l| < \exp(-e^x)$ and $|D_2R_l(z,\lambda)| < \exp(4|\lambda_0|e^x)$. The former implies 
	    $$|Df^{n_l}_\lambda(R(z,\lambda))|/|Df^{n_l}(z)| \approx 1,$$ 
	    for all $\lambda \in B$, which combined with \eqref{eq:dful} produces the bound  
	    $$|Df^{n_l}_\lambda(y)| < 2 e^{x^9} |\Re(f^{n_l}(z)) +2\pi|^4  < 3 e^{x^9} |\Re(f^{n_l}(z))|^4 $$
	    for $y \in  R_l(\widehat{U}_l, \lambda)$. 
	    As $f_\lambda^{n_l}$ maps $R_l(\widehat{U}_l, \lambda)$ biholomorphically onto $B(Q_l, 1)$, for each $z \in R_l(U_l, \lambda)$ there is a neighbourhood $V_z$ mapped biholomorphically by $f^{n_l}_\lambda$ onto $B(f^{n_l}_\lambda(z), 1) \subset B(Q_l, 1)$. By Lemma~\ref{lem:Mdev2}, say, the diameter of $V_z$ is bounded by $e^{-x}$. 

            From before, $f^j(U_l) \subset \cR(-2|\lambda_0|e^x -1)$ and the diameter of $f^j(U_l)$ is bounded by $e^{-x}$ for $j < n_l$. From \eqref{eq:Riterates}, $\dist(f_\lambda^j(z), f^j(U_l)) < e^{-x}$ for all $z \in R_l(U_l, \lambda)$. Thus $n_l$ is the first entry time for each point of $R(U_l, \lambda)$ (under iteration by $f_\lambda$) 
             to $\cL(-2|\lambda_0|e^x -2)$.
             Thus if $R(U_l,\lambda) \cap R(U_{l'},\lambda) \ne \emptyset$, $n_l = n_{l'}$, so $Q_l = Q_{l'}$ (as  $Q_l$ and $Q_{l'}$ either coincide or are disjoint), so $R(U_l,\lambda) = R(U_{l'}, \lambda)$. 
             In particular,
              the sets $R_l(U_l,\lambda)$, $1\leq l \leq L$, are pairwise-disjoint. 
            Define $R$ as the map whose restriction to each $U_l$ is $R_l$.

        It remains to show that  
        $R(U_l, B) \subset A \setminus B(\partial A, e^{-x})$. From above, $\dist(z, U_l) < e^{-x}$ for every $z \in R(U_l,\lambda)$ and each $\lambda \in B$,  
        and $U_l$ has diameter less than $e^{-x}$.
		Therefore $$\sup_{z',z \in U_l} \sup_{\lambda \in B} |R(z',\lambda) - z| < 2e^{-x}.$$ 
		Since there exists $z \in U_l \cap A_0$, so $z \in A \setminus B(\partial A, x^{-1/4})$, and $x^{-1/4} > 3e^{-x}$, we deduce that 
		$R(U_l, B) \subset A \setminus B(\partial A, e^{-x})$,
                as required.
            \end{proof}

   \section{Proof of Main Theorem} 
   The main theorem follows from the following proposition. The number $K$ is, we recall, the local degree of $h(0,\cdot)$ at $\lambda_0$, while $\xi_n(\lambda) = f^n_\lambda(0)$. We denote by $H$ the set of hyperbolic parameters.

   We shall use the estimates for passing from parameter to phase space near  $P(f)$ of Lemma~\ref{lem:xiinject},  and the estimates of Lemma~\ref{lem:overlaps} to go from near $P(f)$ to far out to the left. Their combination allows us to apply Lemma~\ref{lem:guide} to find large sets of hyperbolic parameters. 
   \begin{prop} \label{prop:main}
       Given $\eps >0$, there exists $\gamma, r_4 > 0$ such that for every $r \in (0, r_4)$ and every $4K$-sector $W$ of $A(\lambda_0; \gamma r,  r)$, 
       $$
       \frac{m(H\cap W)}{m(W)} > 1-\eps.
       $$
   \end{prop}
   \begin{proof}
       Let $ r_3, \gamma, \nu_0, C, C_0, C_1$ be given by Lemma~\ref{lem:xiinject}, for $\eps' = \eps/4$.
       For these $C, \nu_0$, let $M_2$ be given by Lemma~\ref{lem:overlaps}. 
       Let $C_2>0$ be large enough that
       $$
             C_1 \exp(11K |\lambda_0|e^x) 3 e^{x^9}  < 
             \exp(C_2 e^x) 
             $$
             for all $x > M_2$. 
             Let $M_3 > M_2$ be large enough that 
             \begin{itemize}
                 \item
                     $1/\log \log M_3 < \eps/3$;
                 \item
                     $M_3 > C_0,C_2$;
                 \item
                     Lemma~\ref{lem:guide} holds for the constant $C_2$ for all $x > M_3$;
                 \item
       $r_4 := \exp(-11|\lambda_0| e^{M_3}) < r_3.$
       \end{itemize}
       Let $r \in (0, r_4)$, so we can fix $x> M_3$ satisfying $r = \exp(-11|\lambda_0| e^x)$. 
         Let $n$ be given by Lemma~\ref{lem:xiinject}, let $W$ be as per the statement and set $A := \xi_n(W)$. 
         From Lemma~\ref{lem:xiinject}, $\xi_n$ is injective with distortion bounded by $1+\eps/4$ and $A$ is a simply-connected open set with $\partial A \leq C$.        Moreover $1/C_0r < |D\xi_n|$ on $W$.
	 The distortion bound implies 
	 \begin{equation}\label{eq:xiest}
	    |D\xi_n| < \sqrt{\frac{m(A)}{m(W)}}(1+\eps/4).
	    \end{equation}

         Meanwhile, $B(\lambda_0, r) \subset B := \{\lambda: |\log(\lambda_0/\lambda)| < \exp(-10|\lambda_0|e^x)\}$, so we can apply 
         Lemma~\ref{lem:overlaps}, 
         obtaining $R: \bigcup_{l=1}^L U_l \times B \to A \setminus B(\partial A, e^{-x})$ 
         together with the numbers $\{n_l\}_{l=1}^L$ and the estimates $|\log D_1R| < \exp(-e^x)|$ and $|D_2R| < \exp(4|\lambda_0| e^x)$. 
       
         Fix $l$ for now, and let $z \in U_l$. 
         Let 
         $$
         Y_z := \{R(z,\lambda) : \lambda \in B\} \subset A\setminus B(\partial A, e^{-x}).
         $$
         As $\exp(11 |\lambda_0| e^x)/C_0 =   1/C_0 r < |D\xi_n|$,
         \begin{equation}\label{eq:R2contract}
             |D_2R| < \exp(e^{-x}) |D\xi_n|.
         \end{equation}
         Hence
         the map $y \mapsto R(z, \xi_n^{-1}(y))$ is a strict contraction on $Y_z$ and  it has a unique fixed point $y_z \in \overline{Y_z} \subset A$. Let $\Lambda(z) := \xi_n^{-1}(y_z)$, 
         so $\xi_n(\Lambda(z)) = R(z, \Lambda(z))$. 
         
         Now $D\xi_n - D_2R \ne 0$, so we can apply the implicit function theorem to deduce that $z \mapsto \Lambda(z)$ is holomorphic  on each $U_l$. 
         Suppose $\Lambda(z)=\Lambda(z_1)$. From Lemma~\ref{lem:overlaps}, for 
         each $\lambda$, the sets $R(U_l, \lambda)$ are pairwise-disjoint, so $z$ and  $z_1$ must be in the same $U_l$. But on each $U_l\times \{\lambda\}$, $R$ is a homeomorphism, so $z = z_1$. Thus  $\Lambda(z)$ is injective on $U := \bigcup_{l=1}^L U_l$. 
         The map $\Lambda$ gives the link between parameter space and phase space.

        Taking derivative of $\xi_n(\Lambda(z)) = R(z, \Lambda(z))$ with respect to $z$, 
            $$
        D\xi_n(\Lambda(z))D\Lambda(z) = D_1R(z,\Lambda(z)) + D_2R(z,\Lambda(z)) D\Lambda(z),
            $$
            so
            \begin{equation}\label{eq:xilamb}
            D\Lambda(z) = \frac{D_1R(z,\Lambda(z))}{  - D_2R(z,\Lambda(z))+D\xi_n(z)}.
        \end{equation}
        Together with \eqref{eq:R2contract} and the estimate for  $|\log D_1R|$, \eqref{eq:xilamb} implies 
            $| D\Lambda(z)| > (1 - e^{-x})/ |D\xi_n(\Lambda(z))|$, say. 
	    Using \eqref{eq:xiest} and integrating $|D\Lambda|^2$ over $U$,  
	    $$
	    m( \Lambda(U)) > m(U) \frac{(1-e^{-x})^2}{(1+\eps/4)^2} \frac{m(W)}{m(A)}.
	    $$
         From Lemma~\ref{lem:overlaps} and choice of $M_3$, $m(U)/m(A) \geq 1 - 1/\log \log x > 1 - \eps/3$. 
             Thus 
             \begin{equation}\label{eq:UW1}
                 m(\Lambda(U))/m(W) \geq \frac{(1-e^{-x})^2}{(1+\eps/4)^2}(1-\eps/3) > 
                 1-\varepsilon.
             \end{equation}
             We have shown that $\Lambda(U)$ is a relatively large set. Next we show that it consists of hyperbolic parameters.

             Let $\lambda \in \Lambda(U_l)$ say and set $z := R(\Lambda^{-1}(\lambda), \lambda) = f^{n}_\lambda(0)$.  Let $V_z$ be given by Lemmma~\ref{lem:overlaps}, so $V_z$ of $z$ with diameter bounded by $e^{-x}$ gets mapped biholomorphically onto $B(f^{n_l}_\lambda(z), 1)$.
             For $j \leq n$, we know $f^j_\lambda(0) \in V$, so 
             by Lemma~\ref{lem:Vballs}, a neighbourhood of $0$ gets mapped biholomorphically onto $B(f^n_\lambda(0), \Delta \delta_0) \supset B(z, e^{-x})$. Therefore a neighbourhood of $0$ gets mapped biholomorphically by $f^{n + n_l}_\lambda$ onto 
             $$B(f^{n+n_l}_\lambda(0), 1) \subset \cL(-e^{x +\sqrt{x}} + 3\pi).$$ 
             From Lemma~\ref{lem:overlaps}, we have
             $$|Df_\lambda^{n_l}(z)| < 3e^{x^9} |\re(f^{n_l}(z))|^4,$$
             while Lemma~\ref{lem:xiinject} states that 
             $|Df_\lambda^n(0)| < C_1 /r^K.$ Recalling
             $r = \exp(-11|\lambda_0| e^x)$ and the choice of $C_2$,
             we obtain 
             $$|Df^{n + n_l}_\lambda(0)| < 
             C_1 \exp(11K |\lambda_0|e^x) 3 e^{x^9} |\re(f^{n+n_l}_\lambda(0))|^4 < 
             \exp(C_2 e^x) 
             |\re(f^{n+n_l}_\lambda(0))|^4. $$

             Applying Lemma~\ref{lem:guide}, $\lambda$ is a hyperbolic parameter.   This holds for each $\lambda \in \Lambda(U)$, so $\Lambda(U) \subset H$. Thus \eqref{eq:UW1} gives
            $$
            \frac{m(H\cap W)}{m(W)} \geq \frac{m(\Lambda(U))}{m(W)} \geq  
            1-\varepsilon,
            $$
            as required.
        \end{proof}
        The statement of the main theorem follows immediately from Proposition~\ref{prop:main}, so its proof is now complete.

            \section*{Acknowledgments}
            While this research was being conducted, the author was a Goldstine Fellow at the Business Analytics and Mathematical Sciences Department at the IBM T.\ J.\ Watson Research Center in New York. The author is very grateful to IBM and its staff for their support. The revision was carried out at University of Helsinki. The referee gave many helpful comments and also deserves thanks for suggesting proving non-existence of Lyapunov exponents. 

\bibliography{refs} 

\def\polhk#1{\setbox0=\hbox{#1}{\ooalign{\hidewidth
  \lower1.5ex\hbox{`}\hidewidth\crcr\unhbox0}}}
  \def\polhk#1{\setbox0=\hbox{#1}{\ooalign{\hidewidth
  \lower1.5ex\hbox{`}\hidewidth\crcr\unhbox0}}}
  \def\polhk#1{\setbox0=\hbox{#1}{\ooalign{\hidewidth
  \lower1.5ex\hbox{`}\hidewidth\crcr\unhbox0}}}
\begin{thebibliography}{10}

\bibitem{Asp:Rare}
Magnus Aspenberg.
\newblock Rational {M}isiurewicz maps are rare.
\newblock {\em Comm. Math. Phys.}, 291(3):645--658, 2009.

\bibitem{Asp:RatFund}
Magnus Aspenberg.
\newblock Rational {M}isiurewicz maps for which the {J}ulia set is not the
  whole sphere.
\newblock {\em Fund. Math.}, 206:41--48, 2009.

\bibitem{Bad:Rare}
Agnieszka Bade{\'n}ska.
\newblock Misiurewicz parameters in the exponential family.
\newblock {\em Math. Z.}, 268(1-2):291--303, 2011.

\bibitem{BC:Quad}
Michael Benedicks and Lennart Carleson.
\newblock On iterations of {$1-ax\sp 2$} on {$(-1,1)$}.
\newblock {\em Ann. of Math. (2)}, 122(1):1--25, 1985.

\bibitem{BC:Henon}
Michael Benedicks and Lennart Carleson.
\newblock The dynamics of the {H}\'enon map.
\newblock {\em Ann. of Math. (2)}, 133(1):73--169, 1991.

\bibitem{MisBen:Flat}
Michael Benedicks and Micha{\l} Misiurewicz.
\newblock Absolutely continuous invariant measures for maps with flat tops.
\newblock {\em Inst. Hautes \'Etudes Sci. Publ. Math.}, (69):203--213, 1989.

\bibitem{BRSvS}
H.~Bruin, J.~Rivera-Letelier, W.~Shen, and S.~van Strien.
\newblock Large derivatives, backward contraction and invariant densities for
  interval maps.
\newblock {\em Invent. Math.}, 172(3):509--533, 2008.

\bibitem{Me:Philsoc}
Neil Dobbs.
\newblock Nice sets and invariant densities in complex dynamics.
\newblock {\em Math. Proc. Cambridge Philos. Soc.}, 150(1):157--165, 2011.

\bibitem{Me:Cusp}
Neil Dobbs.
\newblock On cusps and flat tops.
\newblock {\em To appear, Annales de l'Institut Fourier}, 2014.

\bibitem{MeBartek}
Neil Dobbs and Bart{\l}omiej Skorulski.
\newblock Non-existence of absolutely continuous invariant probabilities for
  exponential maps.
\newblock {\em Fund. Math.}, 198(3):283--287, 2008.

\bibitem{EremenkoLyubich:Entire}
A.~{\`E}. Er{\"e}menko and M.~Yu. Lyubich.
\newblock Dynamical properties of some classes of entire functions.
\newblock {\em Ann. Inst. Fourier (Grenoble)}, 42(4):989--1020, 1992.

\bibitem{GKS:Nonrec}
Jacek Graczyk, Janina Kotus, and Grzegorz {\'S}wi{\polhk{a}}tek.
\newblock Non-recurrent meromorphic functions.
\newblock {\em Fund. Math.}, 182(3):269--281, 2004.

\bibitem{GS:Fatou}
Jacek Graczyk and Grzegorz {\'S}wi{\polhk{a}}tek.
\newblock {\em The real {F}atou conjecture}, volume 144 of {\em Annals of
  Mathematics Studies}.
\newblock Princeton University Press, Princeton, NJ, 1998.

\bibitem{GPS:Mis}
P.~Grzegorczyk, F.~Przytycki, and W.~Szlenk.
\newblock On iterations of {M}isiurewicz's rational maps on the {R}iemann
  sphere.
\newblock {\em Ann. Inst. H. Poincar\'e Phys. Th\'eor.}, 53(4):431--444, 1990.
\newblock Hyperbolic behaviour of dynamical systems (Paris, 1990).

\bibitem{Jakobson:Quad}
M.~V. Jakobson.
\newblock Absolutely continuous invariant measures for one-parameter families
  of one-dimensional maps.
\newblock {\em Comm. Math. Phys.}, 81(1):39--88, 1981.

\bibitem{Keller:ExpActHopf}
Gerhard Keller.
\newblock Exponents, attractors and {H}opf decompositions for interval maps.
\newblock {\em Ergodic Theory Dynam. Systems}, 10(4):717--744, 1990.

\bibitem{KotSwi:Us}
Janina Kotus and Grzegorz {\'S}wi{\c{a}}tek.
\newblock No finite invariant density for {M}isiurewicz exponential maps.
\newblock {\em C. R. Math. Acad. Sci. Paris}, 346(9-10):559--562, 2008.

\bibitem{Lyubich:Exp}
M.~Yu. Lyubich.
\newblock The measurable dynamics of the exponential.
\newblock {\em Sibirsk. Mat. Zh.}, 28(5):111--127, 1987.

\bibitem{Lyubich:QP12}
Mikhail Lyubich.
\newblock Dynamics of quadratic polynomials. {I}, {II}.
\newblock {\em Acta Math.}, 178(2):185--247, 247--297, 1997.

\bibitem{Makienko:Exp}
P.~Makienko and G.~Sienra.
\newblock Poincar\'e series and instability of exponential maps.
\newblock {\em Bol. Soc. Mat. Mexicana (3)}, 12(2):213--228, 2006.

\bibitem{McMullen:Area}
Curt McMullen.
\newblock Area and {H}ausdorff dimension of {J}ulia sets of entire functions.
\newblock {\em Trans. Amer. Math. Soc.}, 300(1):329--342, 1987.

\bibitem{Milnor:TwoCrit}
John Milnor.
\newblock On rational maps with two critical points.
\newblock {\em Experiment. Math.}, 9(4):481--522, 2000.

\bibitem{Mis:Mis}
Micha{\l} Misiurewicz.
\newblock Absolutely continuous measures for certain maps of an interval.
\newblock {\em Inst. Hautes \'Etudes Sci. Publ. Math.}, (53):17--51, 1981.

\bibitem{Qiu:Exp}
Weiyuan Qiu.
\newblock Hausdorff dimension of the {$M$}-set of {$\lambda\exp(z)$}.
\newblock {\em Acta Math. Sinica (N.S.)}, 10(4):362--368, 1994.
\newblock A Chinese summary appears in Acta Math. Sinica {{\bf{3}}8} (1995),
  no. 5, 719.

\bibitem{Rees86}
Mary Rees.
\newblock The exponential map is not recurrent.
\newblock {\em Math. Z.}, 191(4):593--598, 1986.

\bibitem{Rees:Rat}
Mary Rees.
\newblock Positive measure sets of ergodic rational maps.
\newblock {\em Ann. Sci. \'Ecole Norm. Sup. (4)}, 19(3):383--407, 1986.

\bibitem{RempeSchl:Exp}
Lasse Rempe and Dierk Schleicher.
\newblock Bifurcations in the space of exponential maps.
\newblock {\em Invent. Math.}, 175(1):103--135, 2009.

\bibitem{JuanRL:continuity}
Juan Rivera-Letelier.
\newblock On the continuity of {H}ausdorff dimension of {J}ulia sets and
  similarity between the {M}andelbrot set and {J}ulia sets.
\newblock {\em Fund. Math.}, 170(3):287--317, 2001.

\bibitem{Sands:Rare}
Duncan Sands.
\newblock Misiurewicz maps are rare.
\newblock {\em Comm. Math. Phys.}, 197(1):109--129, 1998.

\bibitem{Thun:Flat}
Hans Thunberg.
\newblock Positive exponent in families with flat critical point.
\newblock {\em Ergodic Theory Dynam. Systems}, 19(3):767--807, 1999.

\bibitem{UZ:ExpInstability}
Mariusz Urba{\'n}ski and Anna Zdunik.
\newblock Instability of exponential {C}ollet-{E}ckmann maps.
\newblock {\em Israel J. Math.}, 161:347--371, 2007.

\bibitem{wang2008most}
X.~Wang and G.~Zhang.
\newblock Most of the maps near the exponential are hyperbolic.
\newblock {\em Nagoya Mathematical Journal}, 191:135--148, 2008.

\end{thebibliography}
\bibliographystyle{plain}
  
\end{document}